%%sb final revision    06/06/08.
%------------------------------------------------------------------------------
% Beginning of genquad.tex
%------------------------------------------------------------------------------
%
% AMS-LaTeX 1.2 sample file for journals, based on amsart.cls.
%
% Replace amsart by the documentclass for the target journal, e.g. tran-l.
%
\documentclass[10pt]{amsart}
\usepackage{graphicx,color,epsf}
\usepackage [cmtip,arrow]{xy}
\usepackage {pb-diagram,pb-xy} 

\usepackage{amsmath}
\newtheorem{theorem}{Theorem}[section]
\newtheorem{lemma}[theorem]{Lemma}
\newtheorem{corollary}[theorem]{Corollary}
\newtheorem{proposition}[theorem]{Proposition}

\theoremstyle{definition}
\newtheorem{definition}[theorem]{Definition}
\newtheorem{example}[theorem]{Example}

\theoremstyle{remark}
\newtheorem{remark}[theorem]{Remark}

\numberwithin{equation}{section}

%    Absolute value notation

%    Blank box placeholder for figures (to avoid requiring any
%    particular graphics capabilities for printing this document).

%%\newenvironment{proof}[1]{\trivlist \item[\hskip \labelsep{\bf
%%#1}]}{\hfill\mbox{$\Box$} \endtrivlist}

\newcommand {\hide}[1]{}
\newtheorem{notation}{Notation}

\newcommand {\junk}[1]{}
\newtheorem{algorithm}{{\bf {\sc Algorithm}}}

\newcommand {\R} {\mbox{\rm R}}
\newcommand {\s}        {\mbox{\rm sign}}

\newcommand {\K}     {\mbox{\rm K}}

  % Real space
     %Complex space
                  % R^d
                 % R^dn
                   % R^2
%%\newcommand {\Sphere}[1] {\mbox{${\bf S}^{#1}$}}     % Sphere
\newcommand {\Sphere}{\mbox{${\bf S}$}}     % Sphere
                  % R^m

\newcommand {\Z}  {\mathbb{Z}}
 
\newcommand {\Q}         {\mathbb{Q}}

\newcommand {\RR} {{\mathcal R}}

\newcommand {\la}   {{\langle}}
\newcommand {\ra}   {{\rangle}}
\newcommand {\eps} {{\varepsilon}}
\newcommand {\E} {{\rm Ext}}

 %projective space

\newcommand {\spanof} {{\rm span}}

\def\addots{\mathinner{\mkern1mu
\raise1pt\vbox{\kern7pt\hbox{.}}
\mkern2mu\raise4pt\hbox{.}\mkern2mu
\raise7pt\hbox{.}\mkern1mu}}

\newcommand{\HH}  {\mbox{\rm H}}

\begin{document}
\title[Bounding the Betti numbers]
{Bounding the Betti numbers and computing the Euler-Poincar\'e characteristic
of semi-algebraic sets defined by partly quadratic systems
of polynomials}
%    Information for first author
\author{Saugata Basu}
%    Address of record for the research reported here
\address{School of Mathematics,
Georgia Institute of Technology, Atlanta, GA 30332, U.S.A.}
%    Current address
\email{saugata.basu@math.gatech.edu}
\author{Dmitrii V. Pasechnik}
\address{
MAS Division, School of Physical and Mathematical Sciences,
Nanyang Technological University,
21 Nanyang Link,
Singapore 637371.
}
\email{dima@ntu.edu.sg}
\author{Marie-Fran\c{c}oise Roy}
\address{IRMAR (URA CNRS 305), 
Universit\'{e} de Rennes I,
Campus de Beaulieu 35042 Rennes cedex FRANCE.}
\email{marie-francoise.roy@univ-rennes1.fr}

\thanks{The first author was supported in part by an NSF 
grant CCF-0634907. 
The second author was supported
by an NTU startup grant.
Part of this work was done when the authors
were visiting the Institute of
Mathematics and Its Application, Minneapolis, IRMAR (Rennes) and
the Nanyang Technological University, Singapore.} 

%    General info
\subjclass{Primary 14P10, 14P25; Secondary 68W30}
%%\date{January 1, 1994 and, in revised form, June 22, 1994.}

%%\dedicatory{This paper is dedicated to our authors.}

\keywords{Betti numbers, Quadratic inequalities, Semi-algebraic sets}

\begin{abstract}
Let $\R$ be a real closed field,
$
{\mathcal Q} \subset \R[Y_1,\ldots,Y_\ell,X_1,\ldots,X_k],
$
with
$
\deg_{Y}(Q) \leq 2, \deg_{X}(Q) \leq d, Q \in {\mathcal Q}, \#({\mathcal Q})=m,$ 
and
$
{\mathcal P}
\subset \R[X_1,\ldots,X_k]
$
with 
$\deg_{X}(P) \leq d, P \in {\mathcal P}, \#({\mathcal P})=s$,
and $S \subset \R^{\ell+k}$  
a semi-algebraic set defined by a 
Boolean formula without negations, with
atoms  $P=0, P \geq 0, P \leq 0, \;
P \in {\mathcal P} \cup {\mathcal Q}$.
We prove that the sum of the Betti numbers of $S$ 
is bounded by
\[
\ell^2 (O(s+\ell+m)\ell d)^{k+2m}. 
\]
This is a common generalization of  previous results in 
\cite{B99} and \cite{Bar97}
on bounding the Betti numbers of closed semi-algebraic sets 
defined by polynomials
of degree $d$ and $2$, respectively.

We also describe an algorithm for computing the Euler-Poincar\'e
characteristic of such sets,
generalizing similar algorithms described in \cite{B99,Bas05-euler}.
The complexity of the algorithm is bounded by
$(\ell  s  m  d)^{O(m(m+k))}$.
\end{abstract}

\maketitle

\section{Introduction and Main Results}
\label{sec:intro}
Let $\R$ be a real closed field and $S \subset \R^k$ a semi-algebraic set
defined by a
Boolean formula with atoms of the form
$P > 0, P < 0, P=0$ for $P \in {\mathcal P} \subset \R[X_1,\ldots,X_k]$.
We call $S$ a ${\mathcal P}$-semi-algebraic set 
and the Boolean formula defining $S$ a ${\mathcal P}$-formula.
If, instead, the Boolean formula
has atoms of the form $P=0, P \geq 0, P \leq 0, \;P \in {\mathcal P}$, 
and additionally contains no negation,
then we will call $S$ a ${\mathcal P}$-closed semi-algebraic set,
and the formula defining $S$ a ${\mathcal P}$-closed formula. 
Moreover, we call a ${\mathcal P}$-closed semi-algebraic set $S$ basic
if the ${\mathcal P}$-closed formula defining $S$ is a conjunction of
atoms of the form $P=0, P \geq 0, P \leq 0, \;P \in {\mathcal P}$.

For any closed semi-algebraic set $X \subset \R^k$, and any field of
coefficients $\K$, we denote by $b_i(X,\K)$ the dimension  of the
$\K$-vector space, $\HH_i(X,\K)$, which is the $i$-th homology group of $X$
with coefficients in $\K$.  We refer to \cite{BPRbook2} for the definition of
homology in the case of $\R$ being 
an arbitrary real closed field, 
not necessarily the field of real numbers, 
and $\K = \Q$.  
The definition for a more general $\K$ is similar.
We denote by $b(X,\K)$ the sum $\sum_{i \geq 0}
b_i(X,\K)$.  We write $b_i(X)$ for $b_i(X,\Z/2\Z)$ and $b(X)$ for
$b(X,\Z/2\Z)$.  Note that the mod-$2$ Betti numbers $b_i(X)$ are an upper bound
on the Betti numbers $b_i(X,\Q)$ (as a consequence of the Universal Coefficient
Theorem for homology (see \cite{Hatcher} for example)).

The following result appeared in \cite{B99}.
\begin{theorem} \cite{B99}
\label{the:B99}
For a ${\mathcal P}$-closed semi-algebraic set $S \subset \R^k$,
$b(S,\K)$ is bounded by $(O(sd))^k$,
where 
$s = \#({\mathcal P})$, and $d = \max_{P \in {\mathcal P}} \deg(P)$. \qed
\end{theorem}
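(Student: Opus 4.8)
The plan is to reduce the estimate, through a chain of homotopy equivalences that enlarge the defining family of polynomials by at most a constant factor, to the classical Thom--Milnor bound on the Betti numbers of a real variety together with its refinement to the realizations of sign conditions (see \cite{BPRbook2}). First I would reduce to the case where $S$ is closed and bounded: a ${\mathcal P}$-closed set is already closed, and intersecting $S$ with a ball $\B(0,1/\eps)$ of infinitesimally large radius — after extending scalars to $\R\la\eps\ra$, which does not affect the Betti numbers by the transfer principle — yields a closed bounded semi-algebraic set semi-algebraically homotopy equivalent to $S$, at the cost of adjoining one polynomial of degree $2$ to ${\mathcal P}$.

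The main step is a perturbation that puts the sign conditions of ${\mathcal P}$ into general position. The realizations $\mathcal{R}(\sigma)$ of the sign conditions $\sigma\in\{0,+,-\}^{\mathcal P}$ partition $\R^k$, and since $S$ is ${\mathcal P}$-closed one has $S=\bigcup\{\overline{\mathcal{R}(\sigma)}:\mathcal{R}(\sigma)\subseteq S\}$; but these strata may be singular. I would replace ${\mathcal P}$ by a family ${\mathcal P}'$ of $O(s)$ polynomials of degree $\le d$ — the $P\in{\mathcal P}$ together with perturbations $P\pm\eps_j$ formed from a chain of infinitesimals — chosen so that the hypersurfaces $\{P=0\}$, $P\in{\mathcal P}'$, are smooth and meet transversally, and replace $S$ by the ${\mathcal P}'$-closed set $S'$ obtained from the defining formula by substituting for each atom $P\ge 0$, $P\le 0$, $P=0$ a correspondingly fattened version. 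Since an infinitesimal fattening of a closed bounded semi-algebraic set does not change its homotopy type, $b(S)=b(S')$; but now every sign condition of ${\mathcal P}'$ has a nonsingular realization, and the stratification of $S'$ by these realizations is locally conical.

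Now filter the compact set $S'$ by the closed subsets $S'^{(j)}=\bigcup\{\overline{\mathcal{R}(\sigma)}:\mathcal{R}(\sigma)\subseteq S',\ \dim\mathcal{R}(\sigma)\le j\}$. The homology spectral sequence of this filtration converges to $\HH_*(S')$, and by excision applied to the locally conical stratification its first page is $E^1_{j,q}=\bigoplus_{\dim\mathcal{R}(\sigma)=j,\,\mathcal{R}(\sigma)\subseteq S'}\HH^{\mathrm{BM}}_{j+q}(\mathcal{R}(\sigma))$, so that $b(S')\le\sum_{\sigma}b^{\mathrm{BM}}(\mathcal{R}(\sigma))$, summed over all realizable sign conditions of ${\mathcal P}'$. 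Since each $\mathcal{R}(\sigma)$ is now a nonsingular locally closed semi-algebraic set, Poincar\'e--Lefschetz duality identifies $b^{\mathrm{BM}}(\mathcal{R}(\sigma))$ with $b(\mathcal{R}(\sigma))$, and the Thom--Milnor count of the critical points of a generic linear form (bounding the Betti numbers of a real variety of degree $D$ in $k$ variables by $D(2D-1)^{k-1}$), combined with the standard reduction of a sign condition on a variety to a bounded number of equations, gives
\[
b(S)=b(S')\ \le\ \sum_{\sigma}b(\mathcal{R}(\sigma))\ \le\ \sum_{i=0}^{k}\binom{\#({\mathcal P}')}{i}4^i\,d(2d-1)^{k-1}\ =\ (O(sd))^k,
\]
and this count is insensitive to the coefficient field, so the same bound holds for $b(S,\K)$.

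I expect the second step to be the main obstacle: one must arrange that the perturbation simultaneously preserves the homotopy type of $S$ — which is delicate, since a careless perturbation replacing $\{P=0\}$ by $\{P=\eps\}$ can destroy it — and places every sign condition of ${\mathcal P}'$ in general position, all while introducing only $O(s)$ new polynomials and a bounded number of successive infinitesimal extensions, each of which must be invisible to the Betti numbers. A secondary technical point is verifying that the sign-condition stratification of the perturbed set $S'$ is regular enough for the spectral sequence of the third step to have the asserted $E^1$-page.
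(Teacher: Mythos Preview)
The paper does not prove this theorem: it is quoted from \cite{B99} and marked with a \qed\ immediately after the statement, with no argument given. It is used as a black box later (in the proof of Theorem~\ref{the:homogeneous}), and the authors explicitly say that their methods do not yield a new proof of it. So there is no ``paper's own proof'' to compare against; the relevant comparison is with the argument in \cite{B99} (or its textbook version in \cite{BPRbook2}).

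Your sketch is in the right spirit --- reduce to a bounded set, perturb by a chain of infinitesimals so that homotopy type is preserved while the sign-condition stratification becomes tame, then feed everything into the Thom--Milnor bound --- and the first step is exactly what the standard proof does. Where you diverge is in the middle: the argument in \cite{B99,BPRbook2} does not build a filtration-by-dimension spectral sequence or invoke Poincar\'e--Lefschetz duality on the open strata. Instead it uses the inequality recorded here as Proposition~\ref{7:pro:closed}, which bounds $b(S)$ by a sum of $5^{s}$ terms, each the Betti-number total of a \emph{basic} closed set cut out by $s$ equalities/inequalities of degree~$\le d$; each such basic set is then compared, via a further infinitesimal thickening and a Mayer--Vietoris count, to an algebraic hypersurface of degree $O(sd)$ in $\R^{k}$, where Ole\u{\i}nik--Petrovski\u{\i}--Thom--Milnor applies directly. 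This route is more elementary than yours and sidesteps exactly the two obstacles you flagged: one never needs the perturbed sign-condition stratification to be Whitney-regular, and one never needs duality on noncompact strata.

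Your spectral-sequence approach could in principle be made to work, but the step you yourself single out is a genuine gap as written: after an infinitesimal perturbation the realizations $\mathcal R(\sigma)$ of sign conditions on ${\mathcal P}'$ need not be smooth manifolds (transversality of the zero sets of the $P\pm\eps_j$ does not by itself control the boundary structure of the open strata), so the identification $b^{\mathrm{BM}}(\mathcal R(\sigma))=b(\mathcal R(\sigma))$ via Poincar\'e--Lefschetz duality is not justified. If you want to pursue this line you would need the finer ``general position'' statements proved in \cite[Chapter~7]{BPRbook2}, at which point you are essentially reconstructing the standard proof anyway.
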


It is a generalization
of the results due to Oleinik, Petrovsky \cite{OP}, Thom \cite{T}, and
Milnor \cite{Milnor2} on bounding the Betti numbers of real varieties. 
It provides an upper bound on the sum of the 
Betti numbers of ${\mathcal P}$-closed semi-algebraic sets in terms of
the number and degrees of the polynomials in ${\mathcal P}$
(see also \cite{BPR02} for a slightly more precise bound, and
\cite{GaV} for an extension of this result to arbitrary semi-algebraic sets
with a slight worsening of the bound).
Notice that this upper bound has singly exponential dependence on $k$, 
and this dependence is unavoidable (see Example \ref{exa:index} below).

In another direction, a restricted class of semi-algebraic sets - namely, 
semi-algebraic sets defined by quadratic inequalities -- has  been considered
by several researchers \cite{Bar93,Bar97, GrPa04, Bas05-first-Kettner}. 
As in the case of general semi-algebraic sets, the Betti numbers 
of such sets can be exponentially large in the number of variables, 
as can be seen in the following example.

\begin{example}
\label{exa:index}
The set~$S\subset\R^\ell$ defined by 
\[
Y_1(Y_1-1)\ge0,\ldots, Y_\ell(Y_\ell-1)\ge0
\]
satisfies $b_0(S)=2^\ell$.
\end{example}

However, it turns out that for a semi-algebraic set $S \subset \R^{\ell}$
defined by $m$ quadratic inequalities,
it is possible to obtain upper bounds on the Betti numbers of $S$ 
which are polynomial in $\ell$ and exponential only in $m$.
The first such result is due to Barvinok \cite{Bar97}, 
who proved the following  theorem.

\begin{theorem}\cite{Bar97}
\label{the:barvinok}
Let $S \subset \R^{\ell}$ be defined by $Q_1 \geq 0, \ldots,  Q_m \geq 0$, 
$\deg(Q_i) \leq 2, 1 \leq i \leq m$. Then
$b(S,\K) \leq \ell^{O(m)}$.
\end{theorem}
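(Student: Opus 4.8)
The plan is to reduce the computation of the topology of $S$ to that of a set living on a sphere in $\R^m$, whose dimension is controlled by the \emph{number} of quadratic forms rather than by $\ell$, and then to bound the Betti numbers of that auxiliary set by a classical Oleinik--Petrovsky--Thom--Milnor-type estimate. First I would reduce to the homogeneous case: replace each $Q_i$ by its degree-$2$ homogenization $Q_i^h \in \R[Y_0,\ldots,Y_\ell]$, so that each $Q_i^h$ is a genuine quadratic form, and relate $S$ to the intersection of the cone over $S$ with the unit sphere $\Sphere^\ell \subset \R^{\ell+1}$; a standard deformation/coning argument shows the Betti numbers of $S$ are controlled (up to a constant factor and an extra summand) by those of this spherical model, so it suffices to bound $b(\widetilde S,\K)$ where $\widetilde S=\{y\in\Sphere^\ell : Q_1(y)\ge 0,\ldots,Q_m(y)\ge 0\}$ with the $Q_i$ quadratic forms.

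Next I would introduce the key duality. Consider the map $\Omega\colon \R^m \to \{\text{quadratic forms}\}$ sending $\omega=(\omega_1,\ldots,\omega_m)$ to the form $\langle\omega,Q\rangle := \sum_i \omega_i Q_i$, and on the sphere side consider, for each unit vector $\omega$ in the nonnegative orthant's dual cone, the function $y\mapsto \langle\omega,Q\rangle(y)$ on $\Sphere^\ell$. The crucial point is a Leray-type spectral sequence (or a direct Mayer--Vietoris / nerve argument) applied to the covering of $\widetilde S$, or rather of the dual incidence variety
\[
  \mathcal{B} = \{(\omega,y)\in \Delta_{m-1}\times\Sphere^\ell : \langle\omega,Q\rangle(y)\ge 0\},
\]
where $\Delta_{m-1}$ is the standard simplex. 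Projection to $\Sphere^\ell$ shows $\mathcal B$ is homotopy equivalent to $\widetilde S$ (the fibre over $y$ is the set of $\omega$ with $\langle\omega,Q\rangle(y)\ge0$, which is a nonempty face-like convex subset of $\Delta_{m-1}$ whenever $y\in\widetilde S$, hence contractible, and empty otherwise — so one must be slightly careful and instead use the fibre "$\min_i Q_i(y)\ge 0$" reformulation, or work with the complement; this is where I'd be most careful). Projection to $\Delta_{m-1}$ instead has fibres of the form $\{y\in\Sphere^\ell : \langle\omega,Q\rangle(y)\ge 0\}$, i.e.\ the nonnegative part of a single quadratic form on a sphere — and the Betti numbers of such a set are bounded by $O(\ell)$, since after diagonalizing the form it is determined by the index (number of negative eigenvalues), and a sublevel set of a single quadratic form on $\Sphere^\ell$ is homotopy equivalent to a sphere or is contractible.

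The main obstacle — and the technical heart — is converting this fibrewise $O(\ell)$ bound, together with the stratification of $\Delta_{m-1}$ into at most $\ell^{O(1)}$ regions on which the index of $\langle\omega,Q\rangle$ is constant (cut out by the discriminant locus, a hypersurface of degree $\le 2\ell$ in $m$ variables, whose complement has $\ell^{O(m)}$ connected components by Theorem \ref{the:B99} applied in $\R^m$), into a global bound via the Leray spectral sequence of the projection $\mathcal B\to\Delta_{m-1}$. One gets $b(\mathcal B,\K)\le \sum_{p+q} \dim E_2^{p,q}$, and $E_2^{p,q}$ is bounded by (number of strata)$\times$(max Betti number of a fibre), giving a product of two factors each of the shape $\ell^{O(m)}$ and $O(\ell)$ respectively. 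Assembling: $b(S,\K)\le C\cdot \ell^{O(m)}\cdot O(\ell) = \ell^{O(m)}$, as claimed. The delicate bookkeeping lies in (i) ensuring the spectral-sequence/descent argument is valid over an arbitrary real closed field $\R$ (one replaces $\Delta_{m-1}$ by a semi-algebraic triangulation and invokes the semi-algebraic version of Leray's theorem), and (ii) handling the non-closedness or non-compactness introduced by the homogenization step via a small generic perturbation $Q_i \mapsto Q_i \pm \eps$, bounding the perturbed set's Betti numbers and passing to the limit.
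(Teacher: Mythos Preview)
Your overall strategy coincides with the paper's: both build Agrachev's incidence variety and exploit its two projections, one with contractible fibres (Vietoris--Smale), the other with fibres that are nonnegative loci of a single quadratic form on $\Sphere^\ell$ and hence homotopy spheres whose dimension is governed by the index.

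There is, however, a real gap in your first projection step, and you have sensed it without diagnosing it. With $\omega$ ranging over $\Delta_{m-1}$ and the condition $\langle\omega,Q\rangle(y)\ge 0$, the fibre over $y$ is empty only when \emph{every} $Q_i(y)<0$; it is nonempty (and contractible) as soon as $\max_i Q_i(y)\ge 0$. Hence $\mathcal{B}$ is homotopy equivalent to the \emph{union} $\bigcup_i\{Q_i\ge 0\}$ on the sphere, not to the intersection $\widetilde S$. The paper makes exactly this point: its Proposition~\ref{pro:homotopy2} identifies the incidence variety with the union $A^h$ of \eqref{eqn:defofAh}, proves the bound for $A^h$ first (Theorem~\ref{the:homogeneous}), and only then passes to the intersection $W^h$ via the Mayer--Vietoris inequality \eqref{eqn:MV2} (Theorem~\ref{the:homogeneous2}). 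Your ``work with the complement'' remark is another viable fix, but either way an extra step is required and must be stated.

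For the second projection, where the fibre type jumps across index strata, you invoke a Leray spectral sequence over a stratified base; that is essentially Agrachev's and Barvinok's original line. The present paper deliberately avoids that device so as to work uniformly over an arbitrary real closed field and without genericity hypotheses: it retracts $B$ onto an explicit union $C$ of sphere bundles over the strata $F_j\setminus F_{j-1}$ (Proposition~\ref{pro:homotopy1}), thickens the locally closed strata to closed sets $F_j'$ via a chain of infinitesimals, shows only consecutive pieces meet (Lemma~\ref{lem:local}), and then combines Mayer--Vietoris, the sphere-bundle inequality $b(E)\le 2\,b(B)$ of Proposition~\ref{pro:sphere_bundle}, and Theorem~\ref{the:B99} applied in $m$ variables. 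Your ``(number of strata)$\times$(max fibre Betti number)'' heuristic for the $E_2$-page is the right intuition, but it is this bundle-by-bundle Mayer--Vietoris bookkeeping that actually certifies the bound.
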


A tighter bound appears in \cite{Bas05-first-Kettner}.

Even though Theorem \ref{the:B99}  \cite{B99} and Theorem \ref{the:barvinok} \cite{Bar97} 
are stated and proved in the case $\K=\Q$ in the original papers, 
the proofs can be extended without any 
difficulty
to a general $\K$.
\begin{remark}
Notice that the bound in Theorem \ref{the:barvinok} is polynomial
in the dimension $\ell$ for fixed $m$, and this fact depends crucially on the 
assumption that the degrees of the polynomials $Q_1,\ldots,Q_m$ are 
at most two.
For instance, the semi-algebraic set defined by a {\em single} 
polynomial of degree $4$ can have Betti numbers exponentially large in 
$\ell$, as exhibited  by the 
semi-algebraic subset of $\R^\ell$ defined by
$$
\displaylines{
\sum_{i=0}^{\ell} Y_i^2(Y_i - 1)^2 \leq 0.
}
$$
The above example illustrates  the delicate nature of the bound in Theorem
\ref{the:barvinok}, since a single inequality of degree $4$ is enough to 
destroy the polynomial nature of the bound. In contrast to this, 
we show in this paper (see Theorem \ref{the:main} below)
that a polynomial bound on the Betti numbers of $S$ continues to hold, 
even if we allow a few (meaning any constant number) of the variables 
to occur with degrees larger than two in the polynomials 
 used to describe the set $S$.
\end{remark}

We now state the main results of this paper.

\subsection{Bounds on the Betti Numbers}
We consider semi-algebraic sets defined by polynomial inequalities,
in which the dependence of the 
polynomials on a {\em subset of the variables}  is at most quadratic.
As a result we obtain common generalizations of the bounds stated in 
Theorems \ref{the:B99} and \ref{the:barvinok}.
Given any polynomial $P \in \R[X_1,\ldots,X_k,Y_1,\ldots,Y_\ell]$, we will
denote by $\deg_X(P)$ (resp. $\deg_Y(P)$) the total degree of $P$ with respect
to the variables $X_1,\ldots,X_k$ (resp. $Y_1,\ldots,Y_\ell$).
\begin{notation}
\label{not:PQ}
Throughout the paper we fix a real closed field $\R$,
and denote by
\begin{itemize}
\item
${\mathcal Q}\subset  \R[Y_1,\ldots,Y_\ell,X_1,\ldots,X_k]$,
a family of polynomials
with 
\[
\deg_{Y}(Q) \leq 2, 
\deg_{X}(Q) \leq d,  Q\in {\mathcal Q}, \#({\mathcal Q})=m,
\]
\item
${\mathcal P} \subset \R[X_1,\ldots,X_k]$,
a family of polynomials 
with
\[
\deg_{X}(P) \leq d, P \in {\mathcal P}, \#({\mathcal P})=s.
\]
\end{itemize}
\end{notation}

We prove the following theorem.

\begin{theorem}
\label{the:main}
Let $S \subset \R^{\ell+k}$ 
be a $({\mathcal P} \cup {\mathcal Q})$-closed semi-algebraic set. Then
$$
\displaylines{
b(S) \leq 
\ell^2 (O(s+\ell+m)\ell d)^{k+2m}. 
}
$$
In particular, for $m \leq \ell$, we have
$
\displaystyle{
b(S) \leq \ell^2 (O(s+\ell)\ell d)^{k+2m}. 
}
$
\end{theorem}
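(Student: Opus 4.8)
The plan is to reduce the problem to two cases already in the literature: the degree-$d$ bound of Theorem \ref{the:B99} applied in the $k$ variables $X_1,\ldots,X_k$, and Barvinok's quadratic bound of Theorem \ref{the:barvinok} applied in the $\ell$ variables $Y_1,\ldots,Y_\ell$, stitched together by a parametrized Morse-theoretic or ``infinitesimal deformation'' argument. The first step is a standard reduction to the case where $S$ is defined only by inequalities (no equalities) and is in fact a bounded semi-algebraic set: intersecting with a large ball $Y_1^2+\cdots+Y_\ell^2+X_1^2+\cdots+X_k^2 \le R^2$ for $R$ a large element of a real closed extension $\R\la\eps\ra$ does not decrease the sum of the Betti numbers, and replacing each equation $P=0$ by the pair $-\eps \le P \le \eps$ (again over an infinitesimal extension) only increases $b(S)$; one also replaces the Boolean combination by a disjoint union / Mayer--Vietoris bookkeeping so that it suffices to bound $b$ for a \emph{basic} closed set cut out by at most $s+m$ inequalities, $m$ of which are quadratic in $Y$.

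The heart of the argument is to fiber over the $X$-space. For a fixed value $x \in \R^k$ of the $X$-variables, the fiber $S_x \subset \R^\ell$ is defined by $m$ inequalities each of degree $\le 2$ in $Y$; by Theorem \ref{the:barvinok} (in the form valid over any real closed field and any $\K$, as noted in the excerpt), $b(S_x) \le \ell^{O(m)}$. To assemble a global bound, I would use the spectral sequence of a semi-algebraic map, or equivalently the method of \cite{B99} based on choosing a finite set of ``critical'' parameter values: the key point of Barvinok's construction is that $S_x$ is homotopy equivalent (after a suitable deformation) to a set built from a map $\R^\ell \to \R^m$, and the Betti numbers are controlled by the signs of a family of determinantal polynomials in $x$ obtained from the coefficient matrices of the quadratic forms. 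These auxiliary polynomials have degree polynomially bounded in $\ell,d,m$, and there are $(\ell d)^{O(m)}$ of them; the set $\mathcal P$ contributes another $s$ polynomials of degree $d$. Applying Theorem \ref{the:B99} to this combined family in the $k$ variables $X$ — a family of $s + (\ell d)^{O(m)}$ polynomials of degree $O(\ell d)$ in $k$ variables, each fiberwise cell carrying cohomology of rank $\ell^{O(m)}$ — yields
\[
b(S) \;\le\; \ell^{O(m)} \cdot \bigl(O(s+\ell+m)\,\ell d\bigr)^{k} \cdot \bigl(O(\ell d)\bigr)^{O(m)},
\]
and collecting the exponents gives the claimed $\ell^2(O(s+\ell+m)\ell d)^{k+2m}$ after absorbing $\ell^{O(m)}$ into a slightly larger constant inside the base raised to the $2m$.

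The main obstacle is making the fiberwise Barvinok bound \emph{uniform} in the parameter $x$ and converting it into a genuine bound on the total space rather than on fibers: one needs a Morse-type or Leray spectral sequence argument showing that $b(S) \le \sum_i b(S_{x_i}) \cdot (\text{combinatorial complexity of the arrangement in } X\text{-space})$, and for this the deformation underlying Barvinok's theorem must be chosen semi-algebraically and compatibly as $x$ varies, so that the ``critical values'' in $X$-space are themselves zeros of explicit low-degree polynomials. This is precisely where the hypothesis $\deg_Y \le 2$ is used twice — once to get the $\ell^{O(m)}$ fiber bound and once to ensure the parameter polynomials have degree only $O(\ell d)$ and number only $(\ell d)^{O(m)}$. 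I expect the bookkeeping of the infinitesimals (several successive extensions $\R \hookrightarrow \R\la\eps_1\ra \hookrightarrow \cdots$, with the usual argument that extension does not change Betti numbers) and the careful count of the auxiliary family to be the technically delicate parts, but conceptually routine given Theorems \ref{the:B99} and \ref{the:barvinok}.
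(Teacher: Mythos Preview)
Your high-level strategy --- exploit the quadratic dependence in $Y$ first, then apply the degree-$d$ bound of Theorem~\ref{the:B99} in the remaining variables --- matches the paper's, but the mechanism you propose for the first step has a genuine gap.

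You want to apply Barvinok's Theorem~\ref{the:barvinok} fiberwise over $x\in\R^k$ and then assemble the fiber bounds via a ``Leray spectral sequence'' or ``critical values'' argument of the shape $b(S)\le \sum_i b(S_{x_i})\cdot(\text{complexity of an arrangement in }X)$. No such inequality holds in general: a bound on the Betti numbers of every fiber, together with a bound on the Betti numbers of the base (or of any sign-condition decomposition of the base), does \emph{not} bound the Betti numbers of the total space unless the projection is something like a locally trivial fibration over each piece. Making this work is exactly the point where the paper says the techniques behind Theorem~\ref{the:barvinok} ``do not appear to generalize easily to the parametrized situation,'' so treating Barvinok as a black box here is not enough. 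Your own proposal flags this as ``the main obstacle'' but offers no concrete construction.

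What the paper does instead is a parametrized version of Agrachev's construction. One homogenizes in $Y$, introduces dual variables $\omega$ in the simplex $\Omega\subset\R^m$, and forms the single quadratic form $\langle\omega,\mathcal{Q}^h\rangle(\cdot,x)$ in $Y$. The set $A^h$ is shown to be homotopy equivalent to a space $B$ fibering over $F=\Omega\times V\subset\R^{m+k}$, and over the locally closed stratum where $\mathrm{index}(\langle\omega,\mathcal{Q}^h\rangle(\cdot,x))=j$ the fiber is genuinely an $(\ell-j)$-sphere. After an infinitesimal thickening these strata become closed sets $F_j'$ carrying honest sphere bundles $C_j'$, whose pairwise intersections are again sphere bundles; now Proposition~\ref{pro:sphere_bundle} and Mayer--Vietoris apply. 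The bases $F_j'$ are cut out by the $\ell+1$ coefficients $C_0,\ldots,C_\ell$ of the characteristic polynomial of the symmetric matrix of $\langle\omega,\mathcal{Q}^h\rangle$ --- not $(\ell d)^{O(m)}$ polynomials as you guessed --- each of degree $O(\ell d)$, together with $\mathcal P$, and Theorem~\ref{the:B99} is applied in the $m+k$ variables $(\omega,x)$, not just in the $k$ variables $x$. This is where the first $m$ in the exponent $k+2m$ comes from. The second $m$ comes from the reduction of a general $(\mathcal P\cup\mathcal Q)$-closed formula to a single basic intersection: one introduces $m$ new variables $Z_i$ with the equations $Z_i=Q_i$, so that all the sign conditions on $\mathcal Q$ become degree-$1$ conditions in $Z$ and only the $m$ equations $Z_i-Q_i=0$ remain quadratic in $Y$; this pushes $k$ to $k+m$ when Theorem~\ref{the:inhomogeneous} is invoked. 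Your reduction step does not account for this and would not by itself reach a single intersection of the required shape.
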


Notice that Theorem \ref{the:main} can be seen as a common generalization of 
Theorems \ref{the:B99} and \ref{the:barvinok},
in the sense that 
we recover similar bounds (that is bounds having the same shape) 
as in Theorem \ref{the:B99} (respectively, Theorem \ref{the:barvinok})
by setting $\ell$ and $m$ 
(respectively, $s$, $d$ and $k$) to $O(1)$.
Since we use Theorem \ref{the:B99} in the proof of Theorem \ref{the:main} (more precisely in the proof of 
Theorem \ref{the:homogeneous} which is  a key step in the proof of Theorem \ref{the:main}), 
our proof does not give a new proof of Theorem \ref{the:B99}.
However, our methods do give a 
new proof of the known bound on Betti numbers in 
the quadratic case (Theorem \ref{the:barvinok}),
and this new proof is quite different
from those given in \cite{Bar97,Bas05-first-Kettner,GrPa2}.
The techniques used in \cite{Bar97,GrPa2,Bas05-first-Kettner} do not
appear to generalize easily to the parametrized situation 
considered in this paper.

Note also that as a special case of Theorem \ref{the:main}
we obtain a bound on the sum of the Betti numbers of
a semi-algebraic set defined over a quadratic map.
Such sets have been considered from an algorithmic point of view
in \cite{GrPa04}, where 
an efficient algorithm is described for computing sample 
points in every connected component, as well as testing emptiness, 
of such sets.

More precisely, we show the following.
\begin{corollary}
\label{cor:main}
Let $Q = (Q_1,\ldots,Q_k) : \R^{\ell} \rightarrow \R^k$ be a map where
each $Q_i \in \R[Y_1,\ldots,Y_\ell]$ and $\deg(Q_i) \leq 2$.
Let $V \subset \R^k$ be a ${\mathcal P}$-closed semi-algebraic set for
some family ${\mathcal P} \subset \R[X_1,\ldots,X_k]$, with
$\#({\mathcal P}) = s$ and $\deg(P) \leq d, P \in {\mathcal P}$.
Let $S = Q^{-1}(V)$. Then
$$
\displaylines{
b(S) \leq \ell^2 (O(s+\ell+k)\ell d)^{3k}. 
}
$$
\end{corollary}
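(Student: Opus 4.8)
The plan is to realize $S = Q^{-1}(V)$ as a $(\mathcal{P}' \cup \mathcal{Q}')$-closed semi-algebraic set inside $\R^{\ell+k}$ and then invoke Theorem \ref{the:main} with suitable parameters. First I would introduce $k$ new variables $X_1, \ldots, X_k$ and consider the graph-like set
\[
\widetilde{S} = \{ (Y,X) \in \R^{\ell+k} \mid X_1 = Q_1(Y), \ldots, X_k = Q_k(Y), \ X \in V \} \subset \R^{\ell+k}.
\]
The projection $\pi : \R^{\ell+k} \to \R^{\ell}$ onto the $Y$-coordinates restricts to a semi-algebraic homeomorphism $\widetilde{S} \to S$ (its inverse is $Y \mapsto (Y, Q(Y))$, which is continuous and semi-algebraic), so $b(S) = b(\widetilde{S})$ and it suffices to bound $b(\widetilde{S})$.

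Next I would check that $\widetilde{S}$ fits the hypotheses of Notation \ref{not:PQ}. The defining conditions are: the polynomials $X_i - Q_i(Y) = 0$ for $1 \leq i \leq k$, together with the conditions $P(X) \, \square \, 0$ (for $\square \in \{=, \geq, \leq\}$) coming from the $\mathcal{P}$-closed formula for $V$, rewritten in the variables $X_1,\ldots,X_k$. Each polynomial $X_i - Q_i(Y)$ has degree at most $2$ in $Y$ and degree $1 \leq d$ in $X$, so (after also including each $\pm(X_i - Q_i(Y))$, or simply using the two inequalities $X_i - Q_i \geq 0$ and $X_i - Q_i \leq 0$ to encode the equality in a negation-free way) these $k$ polynomials form a family $\mathcal{Q}'$ with $\deg_Y \leq 2$, $\deg_X \leq d$, and $\#(\mathcal{Q}') = k$; thus $m = k$ in the notation of the theorem. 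The polynomials of $\mathcal{P}$, viewed as elements of $\R[X_1,\ldots,X_k]$, form the family $\mathcal{P}'$ with $\#(\mathcal{P}') = s$ and $\deg_X \leq d$. So $\widetilde{S}$ is a $(\mathcal{P}' \cup \mathcal{Q}')$-closed semi-algebraic set with parameters $s' = s$, $m' = k$, $d' = d$, and $k' = k$, $\ell' = \ell$.

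Finally I would substitute these into Theorem \ref{the:main}: $b(\widetilde{S}) \leq \ell^2 (O(s + \ell + k)\,\ell d)^{k + 2k} = \ell^2 (O(s+\ell+k)\,\ell d)^{3k}$, which is exactly the claimed bound since $b(S) = b(\widetilde{S})$. I do not expect any serious obstacle here: the only points requiring a line of care are (i) verifying that the graph construction genuinely preserves the homotopy type, which follows from the explicit continuous semi-algebraic inverse $Y \mapsto (Y, Q(Y))$, and (ii) expressing the equalities $X_i = Q_i(Y)$ within a negation-free $(\mathcal{P}' \cup \mathcal{Q}')$-closed formula, which is immediate by including both $X_i - Q_i \geq 0$ and $X_i - Q_i \leq 0$ as atoms. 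Everything else is a direct specialization of Theorem \ref{the:main}.
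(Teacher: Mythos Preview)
Your proposal is correct and is essentially identical to the paper's own proof: the paper introduces $k$ new variables $Z_1,\ldots,Z_k$, sets $\tilde Q_i = Z_i - Q_i$, defines $\tilde S = \{(y,x)\mid \bigwedge_i \tilde Q_i(y,x)=0 \wedge \Phi(x)\}$, notes that $\tilde S$ is semi-algebraically homeomorphic to $S$, and applies Theorem~\ref{the:main} with $m=k$ to obtain the bound. Your write-up simply fills in a bit more detail (the explicit inverse of the projection and the encoding of equalities as pairs of weak inequalities) than the paper bothers to state.
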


\begin{remark}
Note that the Morse theoretic techniques developed in \cite{GrPa2} give 
a possible alternative approach for proving Corollary \ref{cor:main}.
\end{remark}

\subsection{Algorithmic Implications}
The algorithmic problem of computing topological invariants of semi-algebraic
sets (such as the Betti numbers, Euler-Poincar\'e characteristic) is 
very well studied. We refer the reader to a recent survey 
\cite{Basu_survey}
for a detailed account of the recent progress and 
open problems in this field.  

The techniques developed in this paper for obtaining tight bounds on the
Betti numbers of semi-algebraic sets defined by partly quadratic systems of
polynomials also pave the way towards designing more 
efficient algorithms for computing the Euler-Poincar\'e characteristic as
well as the Betti numbers of such sets. These algorithms have better 
complexity than the ones known before.

\begin{definition}[Complexity]
By complexity of an algorithm we will mean the number of arithmetic
operations (including comparisons) performed by the algorithm
in $\R$.
%%sb
%%In case the input polynomials have integer coefficients with bounded bit-size,
%%then we will often give the bit-complexity, which is the number of bit
%%operations performed by the algorithm.
We refer the reader to \cite[Chapter 8]{BPRbook2} for a full discussion
about the various measures of complexity.
\end{definition}

We prove the following theorem.
\begin{theorem}
\label{the:algo-EP}
There exists an algorithm 
that takes as input the description of a
$({\mathcal P} \cup {\mathcal Q})$-closed semi-algebraic set
$S$ (following the same notation as in Theorem \ref{the:main})
and outputs its
the Euler-Poincar\'e characteristic
$\chi(S)$. 
The complexity of this algorithm  is bounded by 
$(\ell s m d)^{O(m(m+k))}$. 
In the case when $S$ is a basic closed semi-algebraic set
the complexity of the algorithm is 
$(\ell s m d)^{O(m+k)}$.
\end{theorem}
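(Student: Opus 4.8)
The plan is to reduce the computation of $\chi(S)$ to a situation where the techniques behind Theorem \ref{the:main} apply effectively, and then to carry out those topological constructions algorithmically while keeping track of the complexity. First I would handle the reduction from a general $({\mathcal P}\cup{\mathcal Q})$-closed set to a basic one: using the standard conical decomposition / Mayer--Vietoris bookkeeping for the Euler-Poincar\'e characteristic (as in \cite{B99,Bas05-euler}), $\chi(S)$ can be written as an alternating sum of $\chi$ of basic closed sets obtained from sign conditions on ${\mathcal P}\cup{\mathcal Q}$. The number of such sign-condition sets that need to be considered is $(sm)^{O(k+m)}$ (only realizable sign conditions matter, and these can be enumerated within the claimed bound using the algorithms of \cite[Chapter 13]{BPRbook2}), so it suffices to solve the basic case with complexity $(\ell s m d)^{O(m+k)}$ and then multiply.

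For the basic case, I would follow the geometric skeleton underlying the proof of Theorem \ref{the:main}: replace the $m$ quadratic (in $Y$) inequalities by a single infimum construction, pass to the associated homogeneous quadratic forms, and exploit the fibration over the sphere $\mathbf{S}^{m-1}$ (resp.\ a ball) in the space of coefficients $\omega=(\omega_1,\ldots,\omega_m)$ of convex combinations $\sum_i\omega_i Q_i$. The key identity is that, after this duality, $S$ is homotopy equivalent (or at least $\chi$-equivalent after the usual infinitesimal perturbations) to a set fibered over a $k$-parametrized family of low-dimensional bases, where each fiber is described by the kernel of the symmetric matrix of $\sum_i\omega_i Q_i(x,\cdot)$ and the sign conditions coming from ${\mathcal P}$. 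Algorithmically, I would compute a parametrized cell (or cylindrical) decomposition of this $(k+m)$-dimensional parameter space — the $x$-variables together with $\omega$ — adapted to the index of the relevant quadratic form and to the signs of ${\mathcal P}$; since this space has dimension $k+m$ and the describing polynomials have degree $(\ell d)^{O(1)}$ with $(\ell s m d)^{O(1)}$ of them, the parametrized decomposition and the computation of the Euler-Poincar\'e characteristic of each fiber cost $(\ell s m d)^{O(m+k)}$ by the quantifier-elimination and Euler-Poincar\'e algorithms of \cite{B99,Bas05-euler} specialized to this bounded-dimensional base. Summing the local contributions with the correct signs (from the Mayer--Vietoris / additivity formula for $\chi$) gives $\chi(S)$.

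The main obstacle, and the step I expect to need the most care, is making the duality/fibration argument \emph{effective and $\chi$-preserving} rather than merely homotopy-theoretic: the proof of Theorem \ref{the:main} uses spectral sequences and infinitesimal deformations (introducing $\varepsilon$'s to make sets closed and bounded and to control the index strata), and one must verify that each such deformation can be carried out by an algorithm over $\R$ (working in $\R\langle\varepsilon\rangle$ and eventually taking limits, as in \cite[Chapter 12]{BPRbook2}) without the number of infinitesimals exceeding $O(m)$ — otherwise the exponent degrades. Concretely, the index of $\sum_i\omega_i Q_i$ jumps on a subvariety of the $\omega$-sphere, and the fiberwise Euler-Poincar\'e characteristic is governed by these jumps; the algorithm must stratify the base by the index, compute $\chi$ of each stratum-times-fiber, and combine. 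Establishing that this stratification has description complexity $(\ell s m d)^{O(m+k)}$ and that the Euler-Poincar\'e characteristic is additive across it (which is true for $\chi$ even though it fails for individual Betti numbers, and this is exactly why we get a cleaner complexity for $\chi$ than for the full homology) is the technical heart. The basic-closed case is cleaner because there is no outer alternating sum over sign conditions of ${\mathcal P}$, which is what removes the extra factor $m$ from the exponent, yielding $(\ell s m d)^{O(m+k)}$ instead of $(\ell s m d)^{O(m(m+k))}$.
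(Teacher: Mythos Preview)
Your outline for the basic (intersection) case is close in spirit to what the paper does: after homogenizing and adding one extra quadratic, Agrachev's duality expresses the set as a fibration over $F=\Omega\times V\subset\R^{m+k}$ whose fiber over $(\omega,x)$ is homotopy equivalent to a sphere of dimension $\ell-\mathrm{index}(\langle\omega,{\mathcal Q}^h\rangle(\cdot,x))$. The index is determined by the sign condition on the coefficients $C_0,\ldots,C_\ell$ of the characteristic polynomial, so one calls the standard sign--condition Euler--Poincar\'e algorithm on ${\mathcal C}\cup{\mathcal P}$ in the $(m+k)$--dimensional space and multiplies each $\chi^{BM}(\RR(\rho,F))$ by the fiber contribution $1+(-1)^{\ell-n(\rho)}$. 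No cell decomposition is needed, and ${\mathcal P}$ is never decomposed into sign conditions: the ${\mathcal P}$-closed formula $\Phi(x)$ is carried along intact.

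Your reduction from the general closed case to the basic case, however, does not work as stated. You propose to enumerate realizable sign conditions on ${\mathcal P}\cup{\mathcal Q}$ and claim there are $(sm)^{O(k+m)}$ of them, but those sign conditions live in $\R^{\ell+k}$, not $\R^{k+m}$; the correct bound is $((s+m)d)^{O(\ell+k)}$, which is exponential in $\ell$ and destroys the whole point of the theorem. The paper's reduction is different and is the place where the extra factor $m$ in the exponent actually arises: one introduces infinitesimals $\eps_0,\ldots,\eps_m$ (one for each $Q_i$, none for the $P_j$'s) and uses Lemma~\ref{lem:generalizedsigns} to write $\chi(S)$ as a signed sum over $5^m$ ``generalized sign conditions'' $\rho\in\{-2,-1,0,1,2\}^{[m]}$ on ${\mathcal Q}$ alone. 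Each $\RR(\rho,S)$ is of the form $\bigcap_i\{Q_i\ \square\ \pm\eps_i\}\cap\{\Phi(x)\}$ with the \emph{original} ${\mathcal P}$-closed formula $\Phi$ unchanged, so it is directly an input to the intersection-case algorithm. The complexity penalty $(\ell s m d)^{O(m(m+k))}$ versus $(\ell s m d)^{O(m+k)}$ comes not from any sum over ${\mathcal P}$ but from the cost of arithmetic in $\R\langle\eps_0,\ldots,\eps_m\rangle$, where the degrees in the $\eps_i$ grow to $(\ell m d)^{O(m+k)}$. Your last paragraph therefore misidentifies the source of the extra $m$: it is the tower of $m{+}1$ infinitesimals needed to decompose ${\mathcal Q}$, not any alternating sum over ${\mathcal P}$.
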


The algorithm for computing all the Betti numbers has complexity
$(\ell s m d)^{2^{O(m+k)}}$ and 
is much more technical. We omit its description in this paper.
%%It is  described in full detail in a longer version of this paper 
%%(see \cite[Section 4]{BP'R07arxiv}) and will
It will appear in full detail separately in a subsequent paper.
 
While the complexity of both the algorithms discussed above 
is {\em polynomial}  for fixed $m$ and $k$, the complexity of the algorithm
for computing the Euler-Poincar\'e characteristic is significantly better
than that of the algorithm for computing all the Betti numbers.

\subsubsection{Significance from the computational 
complexity theory viewpoint}
The problem of computing the Betti numbers of 
semi-algebraic sets in general is a PSPACE-hard problem.
We refer the reader to \cite{Bas05-top} and the references
contained therein, for a detailed discussion of these hardness results.
In particular, the problem of computing the Betti numbers of a 
real algebraic variety defined by
a single quartic equation is also PSPACE-hard, and the same is true for
semi-algebraic sets defined by many quadratic inequalities. 
On the other hand, as shown in \cite{Bas05-top} (see also \cite{Bas05-top-errata}),
the problem of computing the Betti numbers of semi-algebraic
sets defined by a constant number of quadratic inequalities is solvable in
polynomial time. The results mentioned above indicate that 
the problem of computing the
Betti numbers of semi-algebraic sets defined by a constant number of
polynomial inequalities is solvable in polynomial time, even if we allow
a small (constant sized) subset of the variables to occur with degrees
larger than two in the polynomials defining the given set. 
Note that such a result is not obtainable directly from the results 
in \cite{Bas05-top} by the naive method of 
replacing the monomials having degrees larger than two by a larger set
of quadratic ones (introducing new variables and equations in the process).

For general semi-algebraic sets, the algorithmic problem of computing all
the Betti numbers is notoriously difficult and only doubly exponential
time algorithm is known for this problem. Very recently, singly exponential
time algorithms \cite{BPRbettione,Bas05-first}
have been found for computing the first few Betti numbers
of such sets, but the problem of designing singly exponential time
algorithm for computing all the Betti numbers remains open. Singly exponential
time algorithm is also known for computing the Euler-Poincar\'e characteristic
of general semi-algebraic sets \cite{B99}.

The rest of the paper is organized as follows. In Section \ref{sec:proof}
we prove Theorem  \ref{the:main}. In Section \ref{sec:algo_ep} 
we describe our algorithm
for computing the Euler-Poincar\'e characteristic 
of sets defined by partly quadratic system of polynomials and 
prove Theorem \ref{the:algo-EP}.

\section{Proof of Theorem \ref{the:main}}
\label{sec:proof}
One of the main ideas  behind our proof of Theorem \ref{the:main} is
to parametrize a construction introduced by 
Agrachev in \cite{Agrachev} while studying the topology of sets defined by 
(purely) quadratic inequalities (that is without the parameters 
$X_1,\ldots,X_k$ in  our notation). 
In {\em loc. cit.} Agrachev 
constructs a spectral sequence converging to the cohomology
of the set being studied. 
However, it is assumed that the initial quadratic polynomials are  generic.
In this paper we do not make any genericity assumptions
on our polynomials. In order to prove our main theorem 
we follow 
another approach based on infinitesimal deformations
which avoids the construction of a spectral sequence as done in 
\cite{Agrachev}.

We first need to fix some notation and a few preliminary results
needed later in the proof.

\subsection{Mathematical Preliminaries}
\subsubsection{Some Notation}
For all $a \in R$ we define 
\begin{eqnarray*}
\s(a) &=& 0  \mbox{ if } a = 0, \\
      &=& 1  \mbox{ if } a > 0, \\
      &=& -1 \mbox{ if } a < 0.
\end{eqnarray*}
Let ${\mathcal A}$ be a finite subset of  $\R[X_1,\ldots,X_k]$.
A  {\em sign condition}  on
${\mathcal A}$ is an element of $\{0,1,- 1\}^{\mathcal A}$.
The {\em realization of the sign condition}
$\sigma$, $\RR(\sigma,\R^k)$, is the basic semi-algebraic set
$$
        \{x\in \R^k\;\mid\; \bigwedge_{P\in{\mathcal A}} 
\s({P}(x))=\sigma(P) \}.
$$

A  {\em weak sign condition}  on
${\mathcal A}$ is an element of $\{\{0\},\{0,1\},\{0,-1\}\}^{\mathcal A}$.
The {\em realization of the weak sign condition}
$\rho$, $\RR(\rho,\R^k)$, is the basic semi-algebraic set
$$
        \{x\in \R^k\;\mid\; \bigwedge_{P\in{\mathcal A}} 
\s({P}(x)) \in \rho(P) \}.
$$

We
often abbreviate $\RR(\sigma,\R^k)$ by $\RR(\sigma)$, and we 
denote by ${\rm Sign}({\mathcal A})$ the set
of realizable sign conditions
${\rm Sign}({\mathcal A})=\{\sigma \in \{0,1,- 1\}^{\mathcal A} \;\mid\; \RR(\sigma) \neq \emptyset\}$.

More generally, for any ${\mathcal A} \subset \R[X_1,\ldots,X_k]$ and
a ${\mathcal A}$-formula $\Phi$, we 
denote by 
$\RR(\Phi,\R^k)$, or simply $\RR(\Phi)$,  the
semi-algebraic set defined by $\Phi$ in $\R^k$.

\subsubsection{Use of Infinitesimals}
Later in the paper,
we
extend the ground field $\R$ by infinitesimal
elements.
We denote by $\R\langle \zeta\rangle$  the real closed field of algebraic
Puiseux series in $\zeta$ with coefficients in $\R$ (see \cite{BPRbook2} for
more details). 
The sign of a Puiseux series in $\R\langle \zeta\rangle$
agrees with the sign of the coefficient
of the lowest degree term in
$\zeta$. 
This induces a unique order on $\R\langle \zeta\rangle$ which
makes $\zeta$ infinitesimal: $\zeta$ is positive and smaller than
any positive element of $\R$.
When $a \in \R\la \zeta \ra$ is bounded 
from above and below by some elements of $\R$,
$\lim_\zeta(a)$ is the constant term of $a$, obtained by
substituting 0 for $\zeta$ in $a$.
We denote by 
$\R\langle\zeta_1,\ldots,\zeta_n\rangle$ the
field  $\R\langle \zeta_1\rangle \cdots \langle\zeta_n\rangle$
and in this case 
$\zeta_1$ is positive and infinitesimally small compared to $1$, 
and for $1 \leq i \leq n-1$, 
$\zeta_{i+1}$ is positive and infinitesimally small 
compared to $\zeta_i$, which we abbreviate by writing
$0 < \zeta_n \ll \cdots \ll \zeta_1 \ll 1$.

Let $\R'$ be a real closed field containing $\R$.
Given a semi-algebraic set
$S$ in ${\R}^k$, the {\em extension}
of $S$ to $\R'$, denoted $\E(S,\R'),$ is
the semi-algebraic subset of ${ \R'}^k$ defined by the same
quantifier free formula that defines $S$.
The set $\E(S,\R')$ is well defined (i.e. it only depends on the set
$S$ and not on the quantifier free formula chosen to describe it).
This is an easy consequence of the transfer principle (see for instance
\cite{BPRbook2}).

We will need a few results from algebraic topology, which we state here
without proofs, referring the reader to papers where the proofs appear.

\subsubsection{Mayer-Vietoris Inequalities}
The following inequalities are consequences of the Mayer-Vietoris exact
sequence.

\begin{proposition}[Mayer-Vietoris inequalities]
\label{pro:MV}
Let the subsets $W_1, \ldots , W_t \subset \R^n$ be all 
closed.
Then for each $i \geq 0$ we have
\begin{align}\label{eqn:MV1}
{b}_i \left( \bigcup_{1 \le j \le t} W_j \right) \le & \sum_{J \subset \{1, \ldots ,\ t \}}
{b}_{i- \# (J) +1} \left( \bigcap_{j \in J} W_j \right),\\
\label{eqn:MV2}
{b}_i \left( \bigcap_{1 \le j \le t} W_j \right) \le & \sum_{J \subset \{1, \ldots ,\ t \}}
{b}_{i + \# (J) -1} \left( \bigcup_{j \in J} W_j \right).
\end{align}
\end{proposition}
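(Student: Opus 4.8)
The plan is to establish \eqref{eqn:MV1} and \eqref{eqn:MV2} simultaneously by induction on the number $t$ of sets, using as the only topological ingredient the ordinary Mayer--Vietoris exact sequence for a \emph{pair} of closed sets, and as the only combinatorial ingredient the distributive law for finite unions and intersections. Every set that occurs in the argument is closed semi-algebraic, so after a semi-algebraic triangulation compatible with all the finitely many closed subsets in play (which reduces the situation to simplicial subcomplexes), or directly from the construction of semi-algebraic homology over a real closed field in \cite{BPRbook2}, the sequence
\[
\cdots \to \HH_i(A\cap B) \to \HH_i(A)\oplus\HH_i(B) \to \HH_i(A\cup B) \to \HH_{i-1}(A\cap B) \to \cdots
\]
is exact for any two closed semi-algebraic sets $A,B$; comparing dimensions along this sequence yields the two basic estimates
\[
b_i(A\cup B) \le b_i(A)+b_i(B)+b_{i-1}(A\cap B), \qquad b_i(A\cap B) \le b_i(A)+b_i(B)+b_{i+1}(A\cup B).
\]

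For $t=1$ both inequalities are trivial equalities. Assuming both hold for every family of $t-1$ closed sets, I would prove \eqref{eqn:MV1} for $W_1,\dots,W_t$ by putting $A=\bigcup_{1\le j\le t-1}W_j$ and $B=W_t$, so that $A\cap B=\bigcup_{1\le j\le t-1}(W_j\cap W_t)$, and applying the first basic estimate. Bounding $b_i(A)$ by the inductive hypothesis \eqref{eqn:MV1} for $W_1,\dots,W_{t-1}$ produces the terms indexed by the nonempty $J\subseteq\{1,\dots,t-1\}$, and bounding $b_{i-1}(A\cap B)$ by the inductive hypothesis \eqref{eqn:MV1} for the family $W_1\cap W_t,\dots,W_{t-1}\cap W_t$ produces a sum in which $\bigcap_{j\in J}(W_j\cap W_t)=\bigcap_{j\in J\cup\{t\}}W_j$ and the homological degree $(i-1)-\#(J)+1$ equals $i-\#(J\cup\{t\})+1$; hence the substitution $J\mapsto J\cup\{t\}$ rewrites it as exactly the terms indexed by subsets of $\{1,\dots,t\}$ that contain $t$ and have at least two elements. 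Together with $b_i(W_t)$, which is the term indexed by $\{t\}$, these range over all nonempty subsets of $\{1,\dots,t\}$, each exactly once, which is \eqref{eqn:MV1}.

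The inductive step for \eqref{eqn:MV2} is dual: put $A=\bigcap_{1\le j\le t-1}W_j$ and $B=W_t$, so that $A\cap B=\bigcap_{1\le j\le t}W_j$ is the set whose Betti numbers we want to bound and, by distributivity, $A\cup B=\bigcap_{1\le j\le t-1}(W_j\cup W_t)$. Apply the second basic estimate, bound $b_i(A)$ by the inductive hypothesis \eqref{eqn:MV2} for $W_1,\dots,W_{t-1}$, and bound $b_{i+1}(A\cup B)$ by the inductive hypothesis \eqref{eqn:MV2} for $W_1\cup W_t,\dots,W_{t-1}\cup W_t$; here $\bigcup_{j\in J}(W_j\cup W_t)=\bigcup_{j\in J\cup\{t\}}W_j$ and the degree $(i+1)+\#(J)-1$ equals $i+\#(J\cup\{t\})-1$, so the same substitution $J\mapsto J\cup\{t\}$, together with the term $b_i(W_t)$ for $\{t\}$, assembles everything into \eqref{eqn:MV2}.

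I do not expect a genuine obstacle: once Mayer--Vietoris for a pair is in hand the argument is purely formal, and the only things to check are the bookkeeping claim that $J\mapsto J\cup\{t\}$ accounts for every subset with exactly the right degree shift, and the (trivial) observation that every set produced along the way is again a closed semi-algebraic set, so the induction never leaves the stated class. The one non-formal point is the validity of two-set Mayer--Vietoris for closed semi-algebraic sets over an arbitrary real closed field, which I would settle either by a compatible semi-algebraic triangulation (reducing to simplicial subcomplexes, where the sequence is classical) or by citing \cite{BPRbook2}. If one only wanted \eqref{eqn:MV1}, an alternative is the generalized Mayer--Vietoris (nerve) spectral sequence with $E_1^{p,q}=\bigoplus_{\#(I)=p+1}\HH_q(\bigcap_{i\in I}W_i)\Rightarrow\HH_{p+q}(\bigcup_j W_j)$, using that Betti numbers cannot increase from the $E_1$-page to $E_\infty$; I would nonetheless prefer the inductive proof, since it yields \eqref{eqn:MV2} by the very same mechanism.
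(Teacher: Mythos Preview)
Your argument is correct. The paper does not supply a proof of its own but merely refers the reader to \cite{BPRbook2}; the proof given there is precisely the induction on $t$ via the two-set Mayer--Vietoris sequence that you carry out, so your proposal matches the intended approach.
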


\begin{proof}
See 
for instance 
\cite{BPRbook2}.
\end{proof}

\subsubsection{Topology of Sphere Bundles}

Given a closed and bounded semi-algebraic set $B$, 
a semi-algebraic
$\ell$-sphere bundle over $B$ is given by a continuous 
semi-algebraic map $\pi: E \rightarrow B$, such that for each $b \in B$,
$\pi^{-1}(b)$ is homeomorphic to $\Sphere^\ell$ (the 
$\ell$-dimensional unit sphere in $\R^{\ell +1}$).

We need the following proposition that relates the Betti numbers of $B$
with that  of $E$.
  
\begin{proposition}
\label{pro:sphere_bundle}
Let $B \subset \R^k$ be a closed and bounded semi-algebraic set and 
let $\pi: E \rightarrow B$ be a 
semi-algebraic $\ell$-sphere bundle with base $B$. Then

\begin{equation} 
\label{eqn:sphere_bundle}
b(E) \leq 2\cdot b(B).
\end{equation}
\end{proposition}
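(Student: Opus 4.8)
The plan is to deduce the inequality $b(E) \leq 2\,b(B)$ from the Leray--Hirsch-type argument for sphere bundles, adapted to the semi-algebraic setting over an arbitrary real closed field. The cleanest route is via the Gysin exact sequence (or, if one wants to avoid orientability hypotheses, its $\Z/2\Z$-coefficient version, which is why the statement is formulated for mod-$2$ Betti numbers). First I would recall that for an $\ell$-sphere bundle $\pi : E \to B$ with $B$ closed and bounded, there is a long exact sequence
\begin{equation*}
\cdots \to \HH_i(E) \to \HH_i(B) \to \HH_{i-\ell-1}(B) \to \HH_{i-1}(E) \to \cdots
\end{equation*}
with $\Z/2\Z$ coefficients, obtained from the Serre (here: semi-algebraic) spectral sequence of the fibration, whose $E_2$-page has only two nonzero rows (in fibre-degrees $0$ and $\ell$), each isomorphic to $\HH_*(B)$. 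From exactness, $b_i(E) \leq b_i(B) + b_{i-\ell-1}(B)$ for every $i$, and summing over $i$ gives $b(E) \leq 2\,b(B)$ immediately.

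The main technical point to address is that we are working with semi-algebraic sets over a possibly non-archimedean real closed field $\R$, so the classical topological tools (singular homology, spectral sequences of fibrations) are not directly available. I would handle this in one of two ways. Option one: invoke the semi-algebraic spectral sequence for semi-algebraic fibrations, which has been developed in the literature (e.g. via the theory of semi-algebraic homology in \cite{BPRbook2}); the two-row collapse then works verbatim. Option two, which is more elementary and self-contained: use a semi-algebraic triviality theorem (Hardt's theorem) to write $B$ as a finite disjoint union of semi-algebraic pieces over each of which the bundle is trivial, i.e. a product with $\Sphere^\ell$, and then reassemble via a Mayer--Vietoris / inclusion-exclusion argument using Proposition \ref{pro:MV}. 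For a single trivial piece $U \times \Sphere^\ell$ one has $b(U \times \Sphere^\ell) = 2\,b(U)$ by the K\"unneth formula; the subtlety is that gluing the pieces back together does not interact naively with the factor of $2$, so this route requires more care than it first appears.

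Because of that, I expect the main obstacle to be making the "two nonzero rows" argument rigorous over an arbitrary real closed field without simply citing a black box. The honest and efficient choice is to reduce to the case $\R = \mathbb{R}$: by the transfer principle, the semi-algebraic bundle $\pi : E \to B$, its total space, base, and fibre homeomorphism type are all defined by a finite amount of first-order data, and the Betti numbers of semi-algebraic sets over $\R$ agree with those of their real realizations. So I would first prove the statement for $\R = \mathbb{R}$ using the classical Gysin sequence with $\Z/2\Z$ coefficients (valid for any sphere bundle, orientable or not), obtaining $b(E) \leq 2\,b(B)$; then transfer. The only genuine work is checking that "$\ell$-sphere bundle" in the semi-algebraic sense descends to an actual fibre bundle (or at least a Serre fibration) over $\mathbb{R}$ — this follows from semi-algebraic local triviality (Hardt's theorem), which guarantees the bundle is locally semi-algebraically trivial over $B$, hence a genuine fibre bundle with fibre $\Sphere^\ell$ after realization. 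With that in hand, the chain of inequalities $b_i(E) \leq b_i(B) + b_{i-\ell-1}(B)$ and the final summation complete the proof.
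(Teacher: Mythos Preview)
Your approach is essentially the same as the paper's: both deduce the bound from the two-row Leray--Serre spectral sequence of the sphere bundle, which you package as the Gysin sequence with $\Z/2\Z$ coefficients. The paper is even terser --- for $\ell > 0$ it simply cites the Chern--Spanier inequality $P_E(t) \leq P_{\Sphere^{\ell}}(t)\,P_B(t)$, and for $\ell = 0$ (where that inequality can fail over $\Q$, as the torus double-covering the Klein bottle shows) it invokes the $\Z/2\Z$ Serre spectral sequence just as you do uniformly.
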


\begin{proof}
In case $\ell > 0$, the proposition follows from 
the inequality \eqref{eqn:gysin}
proved in \cite[page~252~(4.1)]{chern-spanier} 
\begin{equation}
\label{eqn:gysin}
P_E(t) \leq P_{\Sphere^{\ell}}(t) P_B(t),
\end{equation}
where 
$P_X(t) = \sum_{i\geq 0} b_i(X) t^i$ denotes  the Poincar\'e polynomial 
of a topological space $X$,
and the inequality holds coefficient-wise.
The inequality \eqref{eqn:sphere_bundle}
holds for the 
Betti numbers with coefficients in $\Q$, as well.

For $\ell = 0$, inequality \eqref{eqn:gysin}
is no longer true for the ordinary Betti numbers,
as can be observed from the example of the two-dimensional torus, which is
a double cover of the Klein bottle. But inequality \eqref{eqn:sphere_bundle}
holds for Betti numbers with $\Z/2\Z$-coefficients. 
This follows from the Leray-Serre spectral sequence of the projection map
$\pi$, 
since the homology with coefficients in a local system in this
case are the same as ordinary homology
(an elementary proof is given in \cite{Bas05-first-Kettner}).  
\end{proof}

We now return to the proof of Theorem \ref{the:main}.

\subsection{Homogeneous Case}
\label{subsec:homogeneous}

\begin{notation}
\label{not:AhWh}
We denote by
\begin{itemize}
\item  ${\mathcal Q}^h$
the  family of polynomials  
obtained by homogenizing ${\mathcal Q}$ 
with respect to the variables $Y$, i.e.
\[
{\mathcal Q}^h = \{Q^h \;\mid\; Q \in {\mathcal Q}\} 
\subset  \R[Y_0,\ldots,Y_\ell,X_1,\ldots,X_k],
\]
where $Q^h=Y_0^2 Q(Y_1/Y_0,\ldots,Y_\ell/Y_0,X_1,\ldots,X_k)$,
\item $\Phi$  a formula defining a ${\mathcal P}$-closed semi-algebraic set $V$,
\item $A^h$ the semi-algebraic set
\begin{equation}
\label{eqn:defofAh}
A^h = \bigcup_{Q \in {\mathcal Q}^h}
\{ (y,x) \;\mid\; |y|=1\; \wedge\; Q(y,x) \leq 0\; \wedge \; \Phi(x)\},
\end{equation}
\item $W^h$ the semi-algebraic set 
\begin{equation}
\label{eqn:defofWh}
W^h = \bigcap_{Q \in {\mathcal Q}^h}
\{ (y,x) \;\mid\; |y|=1\; \wedge\; Q(y,x) \leq 0\; \wedge \; \Phi(x)\}.
\end{equation}
\end{itemize}
\end{notation}

We are going to prove
\begin{theorem}
\label{the:homogeneous}
\begin{eqnarray}
b(A^h) &\leq &
\ell^2 (O((s+\ell+m)\ell d))^{m+k}.
\end{eqnarray}
\end{theorem}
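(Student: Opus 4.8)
The plan is to reduce the study of the (parametrized) union $A^h$ to a family of sets over which the quadratic forms $Q^h(\cdot,x)$ in the $Y$-variables are, for each fixed $x$, simultaneously controllable, following the classical idea of Agrachev but carried out uniformly in the parameter $x$. Concretely, for each $x$ in the base, $Q^h(y,x)$ restricted to the unit sphere $|y|=1$ in $\R^{\ell+1}$ is the restriction of a quadratic form; the condition $Q^h(y,x)\le 0$ is controlled by the positive eigenspaces of that form. First I would introduce the auxiliary space in which one takes a point $\omega$ in the standard simplex $\Delta_{m-1}=\{\omega\in\R^m\;\mid\;\omega_i\ge 0,\ \sum_i\omega_i=1\}$ and considers the single quadratic form $\sum_{i} \omega_i Q_i^h(\cdot,x)$; one sets up the incidence variety
\[
B = \{(\omega,x,y)\;\mid\; \omega\in\Delta_{m-1},\ \Phi(x),\ |y|=1,\ \textstyle\sum_i\omega_i Q_i^h(y,x)\ge 0,\ y \text{ in the nonnegative eigenspace}\},
\]
with an appropriate sign/eigenvalue bookkeeping, together with the projections to $A^h$ (forgetting $\omega$, roughly) and to the $(\omega,x)$-space. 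The point of passing to $B$ is that over $(\omega,x)$ the fibre is a sphere of the top nonnegative eigenspace of a single form, so Proposition \ref{pro:sphere_bundle} applies fibrewise, while the map down to $A^h$ has contractible (or at least cohomologically trivial) fibres by a standard Agrachev-type argument, so it does not decrease information about $b(A^h)$.

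Next I would stratify the $(\omega,x)$-parameter space according to the sign conditions of the relevant polynomials: the entries of $\Phi$'s polynomials $\mathcal P$ (which involve only the $s$ polynomials of degree $\le d$ in $k$ variables), together with the coefficients (in $x$) of the characteristic polynomial of the symmetric matrix of $\sum_i\omega_i Q_i^h(\cdot,x)$ and its leading principal minors — the latter are polynomials in $(\omega,x)$ whose degrees in $x$ are $O(\ell d)$ and whose number is $O(\ell)$, living in $k+m$ variables (the $k$ parameters $x$ plus the $m-1$ simplex coordinates $\omega$). On each such stratum the eigenvalue multiplicities are constant, so $B$ restricted over it is a locally trivial sphere bundle (of a fixed dimension), and Proposition \ref{pro:sphere_bundle} bounds its Betti numbers by twice those of the stratum. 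The Betti numbers of each stratum, and hence (via the Mayer--Vietoris inequalities of Proposition \ref{pro:MV} applied to closures of these strata, after the usual infinitesimal perturbation to make everything closed and bounded — this is where the $\R\langle\zeta\rangle$ and $\lim_\zeta$ machinery enters) the total Betti number of $B$, are bounded by Theorem \ref{the:B99} applied in $k+m$ variables to $O(s+\ell)$ polynomials of degree $O(\ell d)$: this yields $(O(s+\ell+m)\ell d)^{k+m}$. Gluing the pieces together over the $\ell^{O(1)}$-many eigenvalue patterns, and accounting for the factor from the simplex structure, produces the claimed bound $\ell^2(O((s+\ell+m)\ell d))^{m+k}$ for $b(B)$, hence for $b(A^h)$.

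The main obstacle I anticipate is twofold. First, making the reduction to $B$ genuinely cohomology-preserving in the \emph{parametrized} setting without assuming genericity of the $Q_i$: Agrachev's construction builds a spectral sequence under a genericity hypothesis, and here one must instead run the infinitesimal-deformation argument — replace $\mathcal Q^h$ by a nearby family over $\R\langle\zeta\rangle$ so that the relevant discriminant loci are well-behaved, prove that the deformation does not change $b(A^h)$ (a Hardt-triviality / local conic structure argument over the real closed field), and only then stratify. Controlling how the strata fit together, and in particular checking that the sphere-bundle structure of $B$ over each stratum is semi-algebraically locally trivial so that Proposition \ref{pro:sphere_bundle} is legitimately applicable, is the delicate technical heart. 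Second, the bookkeeping of degrees and numbers of polynomials must be done carefully: one must verify that passing to eigenvalue data of $\sum_i\omega_i Q_i^h$ costs only a factor of $\ell$ in the degree and $\ell$ in the count (not $\ell^2$ or worse), since otherwise the exponent or the base of the bound degrades. Once these points are secured, the bound follows by a direct application of Proposition \ref{pro:MV}, Proposition \ref{pro:sphere_bundle} and Theorem \ref{the:B99}, with the extra $\ell^2$ factor absorbing the finitely many ($O(\ell)$) eigenspace-dimension cases and one further Mayer--Vietoris summation.
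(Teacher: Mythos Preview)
Your proposal is essentially the paper's own argument: pass to the Agrachev incidence set $B$ over the orthant (or simplex) of coefficients $\omega$, show $B\simeq A^h$ because the $\omega$-fibres are contractible, stratify the base $\Omega\times V$ by the index of $\langle\omega,\mathcal Q^h\rangle(\cdot,x)$ using the sign conditions on the coefficients of its characteristic polynomial, replace the (non-closed) index strata by infinitesimally thickened closed pieces carrying honest sphere bundles that meet only their immediate neighbours, and finish with Propositions~\ref{pro:MV}, \ref{pro:sphere_bundle} and Theorem~\ref{the:B99} in the $m+k$ variables $(\omega,x)$. Two small corrections: the set $B$ should carry only the single inequality on $\sum_i\omega_iQ_i^h$ (with your sign convention $\omega_i\ge 0$ it must be $\le 0$, not $\ge 0$), while the ``$y$ in the nonnegative eigenspace'' condition belongs to the \emph{retracted} set $C$ obtained from $B$; and the index is detected by Descartes' rule applied to the characteristic-polynomial coefficients $C_0,\dots,C_\ell$, not by principal minors --- this is what keeps the count at $O(\ell)$ polynomials of degree $O(\ell d)$ and makes the final bookkeeping go through.
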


and

\begin{theorem}
\label{the:homogeneous2}
\begin{eqnarray}
b(W^h) &\leq &  \ell^2 (O((s+\ell+m)\ell d))^{m+k}.
\end{eqnarray}
\end{theorem}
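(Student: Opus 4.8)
The plan is to prove Theorems \ref{the:homogeneous} and \ref{the:homogeneous2} together, since $A^h$ and $W^h$ are related by Mayer-Vietoris and the bounds are identical. The central idea is to parametrize Agrachev's construction. For each fixed $x$ with $\Phi(x)$ holding, the polynomials $Q^h(\cdot,x) \in \R[Y_0,\ldots,Y_\ell]$ are quadratic forms in $Y$, so the set $\{y \in \Sphere^\ell \mid \bigwedge_{Q} Q^h(y,x) \le 0\}$ is an intersection of $m$ quadric regions on the sphere. Agrachev's trick is to consider the map $\Sphere^\ell \times \Sphere^{m-1} \to \R$ (or the associated subset of the product) sending $(y,\omega)$ to $\sum_j \omega_j Q_j^h(y,x)$, and to study the set $\{\omega \in \Sphere^{m-1}_{\ge 0} \mid \omega_j \ge 0\}$ together with, over each such $\omega$, the negative eigenspace data of the symmetric matrix $\sum_j \omega_j M_j(x)$, where $M_j(x)$ is the Gram matrix of $Q_j^h(\cdot,x)$. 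The key geometric fact is that $W^h$ (and similarly $A^h$) is homotopy equivalent — or at least cohomologically comparable via a Leray spectral sequence argument or a Mayer-Vietoris over the $\omega$-simplex — to a set $B$ fibered over (a subset of) $\Sphere^{m-1} \times V$, where the fiber over $(\omega,x)$ is a sphere (or union of spheres) of dimension equal to (the index of the negative part of $\sum_j \omega_j M_j(x)$) minus one.

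Here are the steps I would carry out, in order. First, set up the parametrized Agrachev space: define $B \subset \Sphere^{m-1}_{\ge 0} \times V \times \Sphere^\ell$ (or its join-type analogue) capturing pairs where $y$ lies in the negative eigenspace of the matrix pencil $\sum_j \omega_j M_j(x)$, and prove that the natural projection $B \to \Sphere^{m-1}_{\ge 0} \times V$ realizes $B$ as built from sphere bundles over the strata where the negative index is constant. Second, stratify $\Sphere^{m-1}_{\ge 0} \times V$ by the value of the index function $\lambda(\omega,x) = \#\{\text{negative eigenvalues of } \sum_j \omega_j M_j(x)\}$; each stratum is a $\mathcal{P}'$-semi-algebraic set where $\mathcal{P}'$ consists of the original polynomials in $\mathcal{P}$ together with subdiscriminant-type polynomials of the characteristic polynomial of the pencil, which have degree $O(\ell)$ in the $\omega$'s and degree $O(\ell d)$ in $x$, and there are $O((s+\ell+m)\ell d)$-many of them in $O(\ell + m + k)$ variables. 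Third, apply Theorem \ref{the:B99} to bound $b$ of each stratum (and of the relevant closures/unions needed), apply Proposition \ref{pro:sphere_bundle} to pass from the stratum to the portion of $B$ above it (losing only a factor of $2$), and then glue using the Mayer-Vietoris inequalities of Proposition \ref{pro:MV} over the at most $\ell+2$ distinct index values; the number of strata is absorbed into the $(O(\cdots))^{m+k}$ term and the $\ell^2$ prefactor accounts for summing the sphere-bundle contributions across the $\le \ell+1$ possible fiber dimensions. Fourth, relate $b(W^h)$ and $b(A^h)$ to $b(B)$: the passage from the ``pencil negative eigenspace'' description back to the actual semi-algebraic sets $W^h$, $A^h$ is exactly the content of the infinitesimal-deformation argument that replaces Agrachev's spectral sequence — one perturbs the $Q_j$ by infinitesimals $\eps_j$ to make the relevant quadratic forms behave generically fiberwise, uses $\lim_\zeta$ to descend, and invokes the Mayer-Vietoris inequalities \eqref{eqn:MV1}, \eqref{eqn:MV2} to move between the union $A^h$ and the intersection $W^h$.

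The main obstacle, and the step requiring the most care, is the fourth one: establishing rigorously that the parametrized Agrachev space $B$ has cohomology controlling that of $W^h$ (resp.\ $A^h$) \emph{uniformly in the parameter $x$}, without genericity. Agrachev's original argument assumes generic quadratic forms so that the index function is well-behaved and the spectral sequence degenerates nicely; here the forms vary with $x$ over a real closed field and may be arbitrarily degenerate on a subvariety of $V$. The fix is the infinitesimal deformation: replace $\mathcal{Q}^h$ by $\{Q^h - \eps_j(Y_0^2 + \cdots + Y_\ell^2) \mid Q_j \in \mathcal{Q}^h\}$ over $\R\langle \eps_1,\ldots,\eps_m\rangle$ (or a single $\zeta$ with $\eps_j = \zeta^j$), which perturbs each quadratic form and ensures that along the boundary stratification the eigenvalue-crossing behaviour is controlled, then take limits; one must check that this deformation changes neither the homotopy type of the (closed, bounded) sets in question nor the validity of the stratification bound, and that the extension $\E(\cdot,\R\langle\eps\rangle)$ does not alter Betti numbers — a standard fact, but one that needs to be threaded through the whole fibered construction. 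The degree and count bookkeeping in step three (verifying that the subdiscriminant polynomials of the $\ell\times\ell$ pencil matrix have the claimed degrees in $x$ and $\omega$) is routine but must be done carefully to land the exponent $m+k$ and the base $O((s+\ell+m)\ell d)$ exactly.
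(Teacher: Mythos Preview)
Your overall strategy matches the paper's: parametrize Agrachev's construction over $V$, build $B \subset \Omega \times \Sphere^\ell \times V$ with $\Omega$ the nonpositive orthant of $\Sphere^{m-1}$, stratify the base $F = \Omega \times V$ by the index of the pencil $\la\omega,{\mathcal Q}^h\ra(\cdot,x)$, recognize sphere-bundle fibers over the strata, and bound via Theorem~\ref{the:B99} plus Proposition~\ref{pro:sphere_bundle} plus Mayer--Vietoris. The polynomials controlling the index are exactly the coefficients $C_0,\ldots,C_\ell$ of the characteristic polynomial of the matrix pencil (your ``subdiscriminant-type'' polynomials), and your degree count is correct.

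Two points where your description diverges from the paper are worth flagging. First, the relation between $B$ and $A^h$ (your step four) is in fact the \emph{easiest} step, not the hardest: the projection $\varphi_2 : B \to \Sphere^\ell \times V$ has image exactly $A^h$, and each nonempty fiber is the intersection of a closed half-space with the nonpositive orthant of $\Sphere^{m-1}$, hence contractible; Vietoris--Smale gives $B \simeq A^h$ immediately, with no genericity hypothesis and no perturbation (this is Proposition~\ref{pro:homotopy2}). Second, the infinitesimals are not used to perturb the forms $Q_j^h$ as you propose; rather, the paper introduces $\eps_{\ell+1} \gg \cdots \gg \eps_0$ to \emph{thicken the sign conditions on the $C_i$}, turning each locally closed index stratum $F_j \setminus F_{j-1}$ into a closed set $F_j'$ with the property that only consecutive $F_j', F_{j+1}'$ intersect (Lemma~\ref{lem:local}). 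The associated sphere bundles $C_j'$ then glue along sphere bundles over these pairwise intersections, which is what makes the Mayer--Vietoris bookkeeping clean. Your proposal to subtract $\eps_j|Y|^2$ from each $Q_j^h$ is a different device; it might be made to work, but you would then have to verify separately that it preserves the homotopy type of $A^h$ and that the index stratification behaves well, whereas the paper's thickening lives entirely on the base and sidesteps this.

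Finally, the paper's actual proof of Theorem~\ref{the:homogeneous2} from Theorem~\ref{the:homogeneous} is a single line: apply inequality~\eqref{eqn:MV2} to $W^h = \bigcap_{Q} \{|y|=1,\; Q\le 0,\; \Phi\}$, bounding $b(W^h)$ by a sum over $J \subset {\mathcal Q}^h$ of Betti numbers of the corresponding unions $A^h_J$, each already controlled by Theorem~\ref{the:homogeneous}; the $2^m$ terms are absorbed into $(O(\cdots))^{m+k}$. There is no need to rerun the Agrachev construction for the intersection.
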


Before proving Theorem \ref{the:homogeneous} and  
Theorem \ref{the:homogeneous2} we need a few preliminary results.

Let
\begin{equation}
\label{def:omega}
\Omega = \{\omega \in \R^{m} \mid  |\omega| = 1, \omega_i \leq 0, 1 \leq i \leq m\}.
\end{equation}

Let ${\mathcal Q}=\{Q_1,\ldots, Q_m \}$
and ${\mathcal Q}^h=\{Q_1^h,\ldots, Q_m^h \}$.
For $\omega \in \Omega$ we denote by 
$\la \omega ,{\mathcal Q}^h \ra \in \R[Y_0,\ldots,Y_\ell,X_1,\ldots,X_k]$ the polynomial
defined by 
\begin{equation}
\label{def:omegaq}
\la \omega , {\mathcal Q}^h \ra = \sum_{i=1}^{m} \omega_i Q_i^h.
\end{equation}

For $(\omega,x) \in \Omega \times V$, we 
denote by
$\la \omega , {\mathcal Q}^h \ra (\cdot,x)$ the quadratic form in $Y_0,\ldots,Y_\ell$ 
obtained from $\la \omega , {\mathcal Q}^h \ra$ by specializing $X_i = x_i, 1 \leq i \leq k$.

Let $B \subset \Omega \times \Sphere^{\ell} \times V$ 
be the semi-algebraic set defined by
\begin{equation}
\label{def:B}
B = \{ (\omega,y,x)\mid \omega \in \Omega, y\in \Sphere^{\ell}, x \in 
V,  \; \la \omega, {\mathcal Q}^h \ra (y,x) \geq 0\}.
\end{equation}

We denote by $\varphi_1: B \rightarrow F$ and 
$\varphi_2: B \rightarrow \Sphere^{\ell} \times V$ the two projection maps
(see diagram below).

\[
\begin{diagram}
\node{}
\node{B} \arrow{sw,t}{\varphi_{1}}\arrow[2]{s}\arrow{se,t}{\varphi_{2}} \\
\node{F = \Omega \times V} \arrow{se} \node{} \node{\Sphere^{\ell} \times V} \arrow{sw} \\
\node{}\node{V}
\end{diagram}
\]

The following key proposition was proved by Agrachev \cite{Agrachev}
in the unparametrized
situation, but as we see below it works in the parametrized case as well.
Note that the proposition is quite general 
and does not require quadratic dependence on the variables $Y$
(i.e. the polynomials $Q_i$ need not be quadratic in $Y$).

\begin{proposition}
\label{pro:homotopy2}
The semi-algebraic set $B$ is homotopy equivalent to
$A^h$.
\end{proposition}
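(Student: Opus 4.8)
The plan is to construct an explicit deformation retraction, or at least a homotopy equivalence, from $B$ onto a subset that is visibly homeomorphic to $A^h$, following the geometric idea behind Agrachev's construction. First I would analyze the fiber of $\varphi_2$ over a fixed point $(y,x) \in \Sphere^\ell \times V$. This fiber is $\{\omega \in \Omega \mid \langle \omega, \mathcal{Q}^h\rangle(y,x) \geq 0\} = \{\omega \in \Omega \mid \sum_i \omega_i Q_i^h(y,x) \geq 0\}$. Since $\omega$ ranges over the intersection of the unit sphere with the closed negative orthant (a spherical simplex, hence convex and contractible), and the extra condition $\sum_i \omega_i Q_i^h(y,x) \geq 0$ is a single \emph{linear} inequality in $\omega$, this fiber is the intersection of a contractible set with a half-space through the origin. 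The key observation is that this fiber is empty precisely when $\sum_i \omega_i Q_i^h(y,x) < 0$ for all $\omega \in \Omega$, i.e. (since the $\omega_i \le 0$) precisely when $Q_i^h(y,x) > 0$ for all $i$; and it is nonempty and contractible otherwise, namely exactly when $Q_i^h(y,x) \le 0$ for some $i$ (take $\omega$ supported on that index).

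The second step is to package this fiberwise statement. The set of $(y,x)$ over which $\varphi_2$ has nonempty (hence contractible) fiber is exactly $\{(y,x) \in \Sphere^\ell \times V \mid \exists i,\ Q_i^h(y,x) \le 0\}$, which upon noting that $|y| = 1$ for $y \in \Sphere^\ell$ is precisely the set $A^h$ of \eqref{eqn:defofAh}. So $\varphi_2$ restricts to a surjection $B \to A^h$ with contractible (indeed, convex-like) fibers. To upgrade ``contractible fibers'' to ``homotopy equivalence'' I would exhibit a section and a fiberwise deformation retraction: for each $(y,x) \in A^h$, the fiber $\varphi_2^{-1}(y,x) \subset \Omega$ is the intersection of the spherical simplex with a closed half-space, and one can retract it continuously (and semi-algebraically) onto a canonically chosen point — for instance the point of $\Omega$ closest to the ``centroid'' direction that still satisfies the linear constraint, or more simply one can use the fact that radial/geodesic contraction toward such a distinguished point varies continuously in the parameters. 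This gives a strong deformation retraction of $B$ onto the image of a section $s: A^h \to B$, and hence $B \simeq A^h$.

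The main obstacle I anticipate is the continuity/semi-algebraicity of the fiberwise retraction at points of $A^h$ where the structure of the fiber degenerates — that is, where the linear functional $\omega \mapsto \sum_i \omega_i Q_i^h(y,x)$ changes rank or where the half-space $\{\sum_i \omega_i Q_i^h(y,x) \ge 0\}$ becomes tangent to a face of the spherical simplex $\Omega$. At such points the combinatorial type of the fiber jumps, so a naive choice of distinguished point (e.g. the barycenter of the fiber) need not vary continuously. The standard fix is to choose the retraction data more carefully — for example, using the metric projection onto a convex set, which \emph{is} continuous in the defining data, after first replacing the spherical picture by a convex one (projecting $\Omega$ and the constraint to an affine chart, or working with the cone over $\Omega$); the metric projection onto a continuously-varying family of nonempty closed convex sets is continuous, and this is enough. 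Alternatively, one can invoke a general principle (a semi-algebraic version of the Vietoris–Begle / Smale theorem, or the fact that a semi-algebraic surjection with contractible fibers between locally closed semi-algebraic sets admitting suitable triangulations is a homotopy equivalence) to bypass the explicit construction entirely. I would write up the argument via the explicit metric-projection section, since it keeps everything semi-algebraic and elementary, and relegate the general-nonsense alternative to a remark.
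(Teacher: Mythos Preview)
Your analysis of the fibers of $\varphi_2$ and the identification of the image $\varphi_2(B)$ with $A^h$ is exactly what the paper does, and your observation that each nonempty fiber is (after passing to an affine chart) the intersection of a simplex with a half-space, hence convex and contractible, is correct and matches the paper's reasoning.

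The only difference is in how you finish. You propose to build an explicit continuous semi-algebraic section via metric projection and then a fiberwise deformation retraction, worrying (rightly) about continuity where the combinatorial type of the fiber jumps; you mention the Vietoris--Smale theorem only as an alternative to be relegated to a remark. The paper does the opposite: once it knows the fibers are closed, bounded, and contractible, it invokes the Vietoris--Smale theorem \cite{Smale} directly and stops. Your explicit construction would work, but it is unnecessary labor here; the general theorem is precisely designed to absorb the continuity-at-degeneration issues you anticipate. I would swap your priorities: cite Vietoris--Smale as the main argument and, if you like, keep the metric-projection construction as a remark.
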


\begin{proof}
We first prove that $\varphi_2(B) = A^h.$
If $(y,x) \in A^h,$ 
then there exists some $i, 1 \leq i \leq m,$ such that
$(Q_i^h(y,x) \leq 0) \wedge \Phi(x)$. 
Then for $\omega = (-\delta_{1,i},\ldots,-\delta_{m,i})$
(where $\delta_{ij} = 1$ if $i=j$, and $0$ otherwise),
we see that 
$(\omega,y,x) \in B$.
Conversely,
if $(y,x) \in \varphi_2(B),$ then there exists 
$\omega 
\in \Omega$ such that 
$\la \omega, {\mathcal Q}^h \ra(y,x)
\geq 0$. Since 
$\omega \leq 0$ and $\omega \neq 0$, we have 
that 
$(Q_i^h(y,x) \leq 0) \wedge \Phi(x)$ for
some $i, 1 \leq i \leq m$. This shows that $(y,x) \in A^h$.

For $(y,x) \in \varphi_2(B)$, the fibre 
$$
\varphi_2^{-1}(y,x) = \{ (\omega,y,x) \mid  
 \omega \in \Omega \;\mbox{such that} \;  \la \omega , {\mathcal Q}^h \ra (y,x) \geq 0\},
$$
is a non-empty subset of $\Omega$ defined by a single linear inequality.
Thus, each 
fiber is an intersection of a 
non-empty closed
convex cone with
$\Sphere^{m-1}$.
The proposition now follows from the well-known Vietoris-Smale theorem 
\cite{Smale}
since by the above observation each fiber is a closed, bounded and 
contractible semi-algebraic set.
\end{proof}

We will use the following notation.
\begin{notation}
For a  quadratic form $Q \in \R[Y_0,\ldots,Y_\ell]$, 
we
denote by ${\rm index}(Q)$ the number of
negative eigenvalues of the symmetric matrix of the corresponding bilinear
form, i.e. of the matrix $M$ such that
$Q(y) = \langle M y, y \rangle$ for all $y \in \R^{\ell+1}$ 
(here $\langle\cdot,\cdot\rangle$ denotes the usual inner product). 
We also
denote by $\lambda_i(Q), 0 \leq i \leq \ell$ the eigenvalues of $Q$ 
in non-decreasing order, i.e.
\[ \lambda_0(Q) \leq \lambda_1(Q) \leq \cdots \leq \lambda_\ell(Q).
\]
\end{notation}

For $F=\Omega \times V$ as above we denote
\[
F_j = \{(\omega,x) \in F\;  
\mid \;  {\rm index}(\la \omega , {\mathcal Q}^h \ra (\cdot,x)) \leq j \}.
\]

It is clear that each 
$F_j$ is a closed semi-algebraic subset of 
$F$ and 
we get
a filtration of the space
$F$ 
given by
\[
F_0 \subset F_1 \subset \cdots \subset 
F_{\ell+1} = F.
\]

\begin{lemma}
\label{lem:sphere}
The fibre of the map $\varphi_1$ over a point 
$(\omega,x)\in F_{j}\setminus F_{j-1}$ 
has the homotopy type of a sphere of dimension $\ell-j$. 
\end{lemma}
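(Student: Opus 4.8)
The plan is to analyze the fiber $\varphi_1^{-1}(\omega,x)$ directly as a subset of the sphere $\Sphere^\ell$, using the spectral decomposition of the quadratic form $q := \la\omega,{\mathcal Q}^h\ra(\cdot,x)$ in the variables $Y_0,\dots,Y_\ell$. By definition,
\[
\varphi_1^{-1}(\omega,x) = \{ y \in \Sphere^{\ell} \;\mid\; q(y) \geq 0 \},
\]
so the whole statement is really a fact about a single real quadratic form: if $q$ has exactly $j$ negative eigenvalues (the condition $(\omega,x)\in F_j\setminus F_{j-1}$, which forces $\mathrm{index}(q)=j$), then $\{y\in\Sphere^\ell : q(y)\geq 0\}$ is homotopy equivalent to $\Sphere^{\ell-j}$.

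First I would diagonalize. Write $\R^{\ell+1} = E_- \oplus E_0 \oplus E_+$ as the orthogonal sum of the negative, zero, and positive eigenspaces of the symmetric matrix $M$ representing $q$, with $\dim E_- = j$. Writing $y = (y_-, y_0, y_+)$ accordingly and using coordinates that diagonalize $M$, the condition $q(y)\geq 0$ becomes a weighted inequality $\sum \mu_a (y_+)_a^2 \geq \sum \nu_b (y_-)_b^2$ with all $\mu_a,\nu_b > 0$. The key geometric point is that on this region $y_-$ is "dominated": the set deformation retracts onto the locus $y_- = 0$. I would construct this retraction explicitly — push $y_-$ radially toward $0$ while rescaling $(y_0,y_+)$ to stay on $\Sphere^\ell$; one checks the inequality $q(y)\geq 0$ is preserved along the path (shrinking $y_-$ only decreases the right-hand side while the left-hand side increases), and the endpoint locus $\{y\in\Sphere^\ell : y_- = 0\}$ is precisely the unit sphere in $E_0\oplus E_+$, hence a sphere of dimension $(\ell+1) - j - 1 = \ell - j$. (The boundary case $j=0$ gives the whole of $\Sphere^\ell$, consistent with $\ell - 0 = \ell$; and when $q$ is negative definite, $j = \ell+1$, the fiber is empty, which matches the convention that $\Sphere^{-1}=\emptyset$.)

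The main obstacle, and the step deserving the most care, is checking that the radial deformation of $y_-$ stays inside the fiber and depends continuously (indeed semi-algebraically) on $y$ — in particular at points where $(y_0,y_+) = 0$, where the rescaling degenerates. But such points can only occur when $q(y) = -\sum\nu_b(y_-)_b^2 \leq 0$ with equality forcing $y_- = 0$ too; since $|y|=1$ this situation is vacuous, so the rescaling factor $|(y_0,y_+)|$ is bounded away from $0$ on the fiber and the retraction is well-defined and continuous. I would also remark that the eigenvalues and eigenprojections of $\la\omega,{\mathcal Q}^h\ra(\cdot,x)$ vary semi-algebraically in $(\omega,x)$ — though for the statement of this lemma, which concerns a single fixed fiber, only the pointwise assertion is needed, so I would not belabor the parametrized continuity here (it will matter later when the $F_j$ are used). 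This completes the identification of the homotopy type of $\varphi_1^{-1}(\omega,x)$ with $\Sphere^{\ell-j}$.
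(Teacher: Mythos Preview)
Your proposal is correct and follows essentially the same approach as the paper: diagonalize the quadratic form $\la\omega,{\mathcal Q}^h\ra(\cdot,x)$ in eigen-coordinates and observe that the fiber retracts onto the unit sphere in the sum of the non-negative eigenspaces, which has dimension $\ell-j$. The paper's proof of this lemma is actually terser than yours --- it simply asserts the homotopy equivalence after writing the fiber in eigen-coordinates, deferring the explicit deformation retraction (your ``push $y_-$ to zero and rescale'' map, together with the check that $(y_0,y_+)\neq 0$ on the fiber) to the proof of Proposition~\ref{pro:homotopy1}; so your version is, if anything, more self-contained at this point.
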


\begin{proof}
Denote $\lambda_i(\omega,x)=\lambda_i(\la \omega , {\mathcal Q}^h \ra(\cdot,x))$ the
eigenvalues of $\la \omega,  {\mathcal Q}^h \ra (\cdot,x)$ in increasing order.
First notice that for
$(\omega,x) \in  F_{j}\setminus F_{j-1}$,
\[
\lambda_0(\omega , x)\le \cdots \le \lambda_{j-1}( \omega ,x)<0.
\] 
Moreover, letting 
$W_0(\la \omega , {\mathcal Q}^h \ra(\cdot,x)),\ldots,W_{\ell}(\la \omega , {\mathcal Q}^h \ra (\cdot,x))$ 
be the co-ordinates with respect to an orthonormal basis consisting of
eigenvectors of $\la \omega , {\mathcal Q}^h\ra (\cdot,x)$, we have that 
$\varphi_1^{-1}(\omega,x)$ is the subset of 
$\Sphere^{\ell} = \{\omega\} \times \Sphere^{\ell} \times \{x\}$ 
defined by
$$
\displaylines{
\sum_{i=0}^{\ell} \lambda_i(\omega, x)W_i(\la \omega ,{\mathcal Q}^h \ra (\cdot,x))^2 \geq  0, \cr
\sum_{i=0}^{\ell} W_i(\la \omega ,{\mathcal Q}^h\ra (\cdot,x))^2 = 1.
}
$$

Since $\lambda_i(\omega ,x) < 0$  for all 
$0 \leq i < j,$ it follows that
for $(\omega,x) \in F_{j}\setminus 
F_{j-1}$,
the fiber $\varphi_1^{-1}(\omega,x)$ is homotopy equivalent to the
$(k-j)$-dimensional sphere defined by setting
\[
W_0(\la \omega, {\mathcal Q}^h \ra (\cdot,x)) = \cdots = W_{j-1}(\la \omega ,{\mathcal Q}^h \ra (\cdot,x)) = 0
\]
on the sphere defined by
\[
\sum_{i=0}^{\ell}W_i(\la \omega , {\mathcal Q}^h \ra (\cdot,x))^2 = 1.
\]
\end{proof}

For each 
$(\omega,x) \in F_j \setminus F_{j-1}$, let 
$L_j^+(\omega,x) \subset \R^{\ell+1}$ denote the sum of the
non-negative eigenspaces of 
$\la \omega , {\mathcal Q}^h \ra (\cdot,x)$.
Since  ${\rm index}(\la \omega, {\mathcal Q}^h \ra (\cdot,x)) = j$ stays invariant as
$(\omega,x)$ varies over $F_j
\setminus F_{j-1}$,
$L_j^+(\omega,x)$ varies continuously with $(\omega,x)$.

We denote by $C$ the semi-algebraic set defined by the following.
We first define for $0 \leq j \leq \ell+1$
\begin{equation}
\label{eqn:definition_of_C_j}
C_j = \{(\omega,y,x) \;\mid\; (\omega,x) \in 
      F_{j}\setminus F_{j-1}, 
y \in L_j^+(\omega,x), |y| = 1\},
\end{equation}
and finally we define
\begin{equation}
\label{eqn:definition_of_C}
C = \bigcup_{j=0}^{\ell+1} C_j.
\end{equation}

The following proposition relates the homotopy type of $B$ to that
of $C$.

\begin{proposition}
\label{pro:homotopy1}
The semi-algebraic set $C$ defined by 
\eqref{eqn:definition_of_C} is homotopy equivalent to $B$.
\end{proposition}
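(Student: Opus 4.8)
The plan is to exhibit a deformation retraction (or a homotopy equivalence built from fiberwise deformation retractions) from $B$ onto $C$, using the stratification $F_0 \subset F_1 \subset \cdots \subset F_{\ell+1} = F$ of the base $F = \Omega \times V$ of the projection $\varphi_1$. Recall that over a point $(\omega,x) \in F_j \setminus F_{j-1}$, the fiber $\varphi_1^{-1}(\omega,x)$ is the subset of the sphere $\Sphere^\ell$ cut out by $\la \omega, {\mathcal Q}^h \ra(\cdot,x) \geq 0$; by Lemma \ref{lem:sphere} this is homotopy equivalent to the $(\ell-j)$-sphere obtained by additionally setting the first $j$ (strictly negative) eigen-coordinates $W_0 = \cdots = W_{j-1} = 0$, which is exactly the unit sphere in the non-negative eigenspace $L_j^+(\omega,x)$, i.e. the fiber $C_j$ of $C$ over $(\omega,x)$. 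So the idea is that $C$ is obtained from $B$ by deformation retracting each fiber of $\varphi_1$ onto this ``core'' sphere sitting inside the non-negative eigenspace.

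First I would make the fiberwise retraction explicit. For fixed $(\omega,x) \in F_j \setminus F_{j-1}$, write $y = (y^-, y^+) $ in the orthogonal decomposition $\R^{\ell+1} = L_j^-(\omega,x) \oplus L_j^+(\omega,x)$ where $L_j^-$ is the span of the negative eigenvectors. The fiber $\varphi_1^{-1}(\omega,x)$ consists of unit vectors $y$ with $\sum \lambda_i W_i(y)^2 \geq 0$; since all $\lambda_i$ with $i < j$ are strictly negative, any such $y$ has $y^+ \neq 0$, so the map $(y^-, y^+) \mapsto (t y^-, y^+)/|(t y^-, y^+)|$ for $t$ running from $1$ to $0$ gives a deformation retraction of the fiber onto the unit sphere in $L_j^+(\omega,x)$, staying inside the fiber throughout because shrinking $y^-$ only increases $\sum \lambda_i W_i(y)^2$ (the negative contributions shrink, the non-negative ones grow after renormalization — one checks this is monotone). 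Then I would globalize: because $\mathrm{index}(\la\omega,{\mathcal Q}^h\ra(\cdot,x))$ is locally constant on each stratum $F_j \setminus F_{j-1}$ and $L_j^+(\omega,x)$ varies continuously there (as already noted in the text preceding the proposition), this fiberwise retraction is continuous on $\varphi_1^{-1}(F_j \setminus F_{j-1})$.

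The main obstacle is that the strata $F_j \setminus F_{j-1}$ are not closed, so I cannot simply perform these retractions stratum-by-stratum and expect them to patch into a global continuous homotopy on $B$ — an eigenvalue can cross zero as $(\omega,x)$ approaches a lower stratum, and then $L_j^+$ jumps in dimension. I expect to handle this exactly as Agrachev does in the unparametrized case \cite{Agrachev}: rather than retracting all at once, one peels off the filtration from the top, or equivalently uses the fact that the pair $(B, C)$ can be analyzed via the filtration $B_j = \varphi_1^{-1}(F_j)$ and one shows by descending (or ascending) induction on $j$, using a Mayer–Vietoris / excision argument together with the fiber computations of Lemma \ref{lem:sphere}, that the inclusion $C \hookrightarrow B$ induces isomorphisms on homology, and then upgrades this to a homotopy equivalence. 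Alternatively — and this is probably cleaner in the semi-algebraic setting — one invokes a semi-algebraic version of the fact that a map whose fibers deformation retract compatibly with a finite stratification, with the retraction vectors extending continuously, yields a strong deformation retraction; the continuity at stratum boundaries is arranged by damping the retraction parameter $t$ to zero as one approaches $F_{j-1}$ (so that near the boundary the map is close to the identity, which is consistent with the lower-stratum retraction).

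To organize the write-up I would: (i) fix notation for the eigenspace decomposition and the ``core'' sphere $\Sphere_j^+(\omega,x) = \Sphere(L_j^+(\omega,x))$, noting $C_j = \{(\omega,y,x) : (\omega,x) \in F_j\setminus F_{j-1},\ y \in \Sphere_j^+(\omega,x)\}$; (ii) prove the fiberwise deformation retraction of $\varphi_1^{-1}(\omega,x)$ onto $\Sphere_j^+(\omega,x)$ with the monotonicity check that keeps the path inside the fiber; (iii) verify continuity of the assembled retraction on each open stratum using continuity of $L_j^+$; (iv) resolve the patching across strata by the inductive Mayer–Vietoris argument of \cite{Agrachev} (or the damped-parameter construction), concluding $B \simeq C$. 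I do not expect to need the quadratic hypothesis on the $Q_i$ here either — only that $\la\omega,{\mathcal Q}^h\ra(\cdot,x)$ is a genuine quadratic form in $Y$, which it is by construction of ${\mathcal Q}^h$ — so this proposition, like Proposition \ref{pro:homotopy2}, is essentially a general fact about families of quadratic forms.
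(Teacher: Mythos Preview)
Your proposal is essentially the paper's approach. The explicit fiberwise retraction you write down---shrink the component of $y$ in the negative eigenspace $L_j^-(\omega,x)$ and renormalize---is exactly the retraction the paper uses, and the organizing idea (work through the index filtration, ``peel off from the top'') is the same.

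The one place the paper is more concrete than your step (iv) is in the device used to handle the stratum boundaries. Rather than a Mayer--Vietoris/homology argument or a damped parameter, the paper introduces the interpolating filtration
\[
B_j \;=\; \bigcup_{i=j}^{\ell+1} C_i \;\cup\; \varphi_1^{-1}(F_{j-1}),\qquad B_{\ell+1}=B,\quad B_0=C,
\]
and passes from $B_{j+1}$ to $B_j$ by performing the fiberwise retraction only over $F_j\setminus F_{j-1}$, leaving the rest fixed. This is a direct chain of deformation retractions and sidesteps the extra work your alternatives would need: the Mayer--Vietoris route would require a Whitehead-type step to promote a homology isomorphism to a homotopy equivalence, and damping the retraction to the identity near $F_{j-1}$ in a single global pass would not actually land you on $C$ at time $t=0$. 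If you replace your preimage filtration $\varphi_1^{-1}(F_j)$ by this hybrid $B_j$, your outline becomes exactly the paper's proof.
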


Before proving the Proposition we give an illustrative example.

\begin{example}
In this example $m=2,\ell = 3,k=0$, and 
${\mathcal Q}^h=\{Q_1^h,Q_2^h\}$ with
\begin{align*}
Q_1^h =& - Y_0^2 - Y_1^2 - Y_2^2, \\
Q_2^h =&   Y_0^2 + 2 Y_1^2  + 3 Y_2^2.
\end{align*}

The set $\Omega$ is the part of the unit circle in the third quadrant of the
plane, 
and  $F = \Omega$ in this case (since $k=0$).
In the following Figure \ref{fig:illus}, we display
the fibers of the map $\varphi_1^{-1}(\omega) \subset B$ for a sequence of 
values of $\omega$ starting from $(-1,0)$ and ending at 
$(0,-1)$. We also show the spheres,
$C \cap \varphi_1^{-1}(\omega)$, of dimensions $0,1$, and $2$, that these fibers
retract to. At $\omega = (-1,0)$, it is easy to verify that
${\rm index}(\la \omega, {\mathcal Q}^h \ra) = 3$, and the 
fiber $\varphi_1^{-1}(\omega) \subset B$
is empty. Starting from
$\omega = (-\cos(\arctan(1)),-\sin(\arctan(1)))$ we have 
${\rm index}(\la \omega ,{\mathcal Q}^h\ra) = 2$,
and the fiber 
$\varphi_1^{-1}(\omega)$ consists of the union of two spherical caps, 
homotopy equivalent to $\Sphere^0$.
Starting from
$\omega = (-\cos(\arctan(1/2)),-\sin(\arctan(1/2)))$ we have
${\rm index}(\la \omega,  {\mathcal Q}^h\ra) = 1$, and 
the fiber $\varphi_1^{-1}(\omega)$ is homotopy equivalent to $\Sphere^1$. Finally,
starting from
$\omega = (-\cos(\arctan(1/3)),-\sin(\arctan(1/3)))$,
${\rm index}(\la \omega , {\mathcal Q}^h\ra) = 0$, and 
the fiber $\varphi_1^{-1}(\omega)$ stays equal to to $\Sphere^2$.

\begin{figure}[hbt]
\scalebox{0.8}{
\begin{picture}(405,70)
\includegraphics{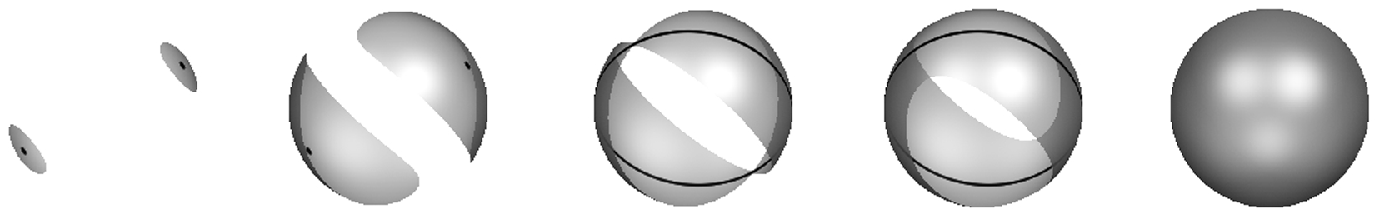}%
\end{picture}
}
\caption{Type change: $\emptyset\to \Sphere^0\to \Sphere^1\to \Sphere^2$. 
$\emptyset$ is not shown. }
\label{fig:illus}
\end{figure}
\end{example}

\begin{proof}[Proof of Proposition \ref{pro:homotopy1}]
We construct a deformation retraction of $B$ to $C$ as follows.  Let 
\begin{equation}
\label{eqn:definition_of_B_j}
B_j = \bigcup_{i=j}^{\ell+1} C_i \cup \varphi_1^{-1}(F_{j-1}),
\end{equation}
and note that $B_{\ell+1}=B,\ldots,B_0=C$.

We construct a sequence of homotopy equivalences from $B_{j+1}$ to $B_{j}$ 
for every $j=\ell,\ldots,0$ as follows.

Let $0\le j \le \ell$. For each $(\omega,x) \in F_{j} \setminus 
F_{j-1}$, we  retract the fiber 
$\varphi_1^{-1}(\omega,x)$ to the $(\ell-j)$-dimensional sphere,
$L_{j}^+(\omega,x) \cap \Sphere^{\ell}$
as follows.
Let
\[
W_0(\la \omega ,{\mathcal Q}^h \ra (\cdot,x)),\ldots,
W_{\ell}(\la \omega ,{\mathcal Q}^h \ra (\cdot,x))
\]
be the co-ordinates with respect to an orthonormal basis consisting of
eigenvectors
\[
e_0(\la \omega ,{\mathcal Q}^h \ra (\cdot,x)),\ldots,e_{\ell}(\la \omega ,{\mathcal Q}^h \ra (\cdot,x))
\]
of $\la \omega , {\mathcal Q}^h \ra(\cdot,x)$ corresponding to 
the non-decreasing sequence of eigenvalues 
of $\la \omega , {\mathcal Q}^h \ra(\cdot,x)$. 
Then
$\varphi_1^{-1}(\omega,x)$ is the subset of 
$\Sphere^{\ell}$ 
defined by
$$
\displaylines{
\sum_{i=0}^{\ell} \lambda_i(\omega ,x)W_i(\la \omega ,{\mathcal Q}^h \ra(\cdot,x))^2 \geq  0, \cr
\sum_{i=0}^{\ell} W_i(\la \omega ,{\mathcal Q}^h \ra(\cdot,x))^2 = 1.
}
$$
and $L_{j}^+(\omega,x)$ is defined by $W_0(\la \omega ,{\mathcal Q}^h \ra (\cdot,x)) = \cdots = 
W_{j-1}(\la \omega ,{\mathcal Q}^h \ra (\cdot,x)) = 0$.
We retract 
$\varphi_1^{-1}(\omega,x)$ to the $(\ell-j)$-dimensional sphere,
$L_{j}^+(\omega,x) \cap \Sphere^{\ell}$
by the retraction sending,
$(w_0,\ldots,w_\ell) \in \varphi_1^{-1}(\omega,x)$, at time $t$ to
$(tw_0,\ldots,tw_{j-1},t'w_j,\ldots,t'w_\ell)$, where $0 \leq t \leq 1$,
and 
$
\displaystyle{
t' = \left(\frac{1 - t^2 \sum_{i=0}^{j-1}w_i^2}{\sum_{i=j}^{\ell}
w_i^2}\right)^{1/2}.
}
$
Notice that even though the local co-ordinates 
$W_0(\la \omega ,{\mathcal Q}^h \ra (\cdot,x)),\ldots,W_{\ell}(\la \omega {\mathcal Q}^h \ra(\cdot,x))$ 
in $\R^{\ell+1}$ 
with respect to the orthonormal basis
$(e_0(\la \omega, {\mathcal Q}^h\ra (\cdot,x)),\ldots,e_{\ell}(\la \omega ,{\mathcal Q}^h\ra (\cdot,x)))$ 
of eigenvectors
may not be uniquely defined at the point $(\omega,x)$ 
(for instance,
if the quadratic form $\la \omega,{\mathcal Q}^h\ra(\cdot,x)$ has multiple eigen-values),
the retraction is still well-defined since it only depends on the
decomposition of 
$\R^{\ell+1}$ into 
orthogonal complements $\spanof(e_0,\ldots,e_{j-1})$ and 
$\spanof(e_j,\ldots,e_\ell)$ which is well defined. 

We can thus retract simultaneously all fibers
over $F_{j} \setminus F_{j-1}$
continuously, to obtain  $B_{j} \subset B$,
which is moreover homotopy equivalent to $B_{j+1}$.
\end{proof}

Notice that the semi-algebraic set 
%MF simplifies notation
%$\varphi_1^{-1}(F_j \setminus F_{j-1})\cap C$ 
$C_j$
is a
$\Sphere^{\ell - j}$-bundle over $F_j \setminus F_{j-1}$ under the
map $\varphi_1$, and $C$ is a union of these sphere bundles.
%MF makes lighter
 Since we have good bounds 
on the number as well as the degrees of polynomials 
used to define the bases, $F_j \setminus F_{j-1}$, 
we can bound the Betti numbers of each 
%MFof these bundles 
$C_j$
using 
Proposition \ref{pro:sphere_bundle}. However, 
%MF these bundles 
the $C_j$
could be possibly 
glued to each other in complicated ways, and thus knowing upper bounds on the
Betti numbers of each 
%MFof these bundles 
$C_j$
does not immediately produce a bound on 
Betti numbers of $C$. In order to get around this difficulty, we consider 
certain closed subsets, $F_j'$ of $F$, where each $F_j'$ is an infinitesimal
deformation of $F_j \setminus F_{j-1}$, 
and form the base of a $\Sphere^{\ell - j}$-bundle 
%MF adds
$C'_j$. 
Additionally, 
%MF these new  sphere bundles 
the $C'_j$
are glued to each other along sphere bundles over
$F_j' \cap F_{j-1}'$, and their union, $C'$,  is homotopy equivalent to $C$. 
Since 
%MF these new bundles 
the $C'_j$
are closed and bounded semi-algebraic sets, and we
have good bounds on their Betti numbers as well as the Betti numbers 
of their non-empty intersections, we can use Mayer-Vietoris inequalities
(Proposition \ref{pro:MV}) to bound the Betti numbers of $C'$, which in turn
are equal to the Betti numbers of $C$.

We now make precise the argument outlined above.

Let
$\Lambda \in \R[Z_1,\ldots,Z_m,X_1,\ldots,X_k,T]$ be the
polynomial defined by

\begin{eqnarray*}
\Lambda  &=& \det( T \cdot {{\rm Id}}_{\ell+1}-M_{Z \cdot {\mathcal Q}^h}),\\
   &=&  T^{\ell+1} + C_{\ell} T^\ell + \cdots + C_0,
\end{eqnarray*}
where $Z \cdot {\mathcal Q}^h = \sum_{i=1}^m Z_i Q_i^h$, and 
each $C_i \in \R[Z_1,\ldots,Z_m,X_1,\ldots,X_k]$.

Note that for $(\omega,x) \in \Omega \times\R^k$, the polynomial
$\Lambda(\omega,x,T)$, 
being the characteristic polynomial of a real symmetric
matrix, has all its roots real. 
It then follows from Descartes' rule of signs 
(see for instance \cite{BPRbook2}),
that for each $(\omega,x) \in \Omega \times \R^k$, 
${\rm index}(\la \omega ,{\mathcal Q}^h\ra (\cdot,x))$ is determined by the sign vector
\[
({\rm sign}(C_\ell(\omega,x)),\ldots,{\rm sign}(C_0(\omega,x))).
\] 
More precisely, the number of  sign variations in the sequence
\[
\s(C_0(\omega,x)),\ldots,(-1)^i \s(C_i(\omega,x)),\ldots, (-1)^\ell \s(C_\ell(\omega,x)),+1
\]
is equal to ${\rm index}(\la \omega , {\mathcal Q}^h \ra (\cdot,x))$.
Hence, denoting 
\begin{equation}
\label{eqn:defincalC}
{\mathcal C} = 
\{C_0,\ldots,C_{\ell}\} \subset \R[Z_1,\ldots,Z_m,X_1,\ldots,X_k],
\end{equation}
we have 

\begin{lemma}
\label{lem:defofD_j}
$F_j$ is the intersection of 
$F$ with a ${\mathcal C}$-closed semi-algebraic set 
$D_j \subset \R^{m+k}$,
for each 
$0 \leq j \leq \ell+1$. \qed
\end{lemma}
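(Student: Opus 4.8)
The plan is to read off the claim directly from the Descartes' rule of signs characterization established just before the statement. Recall that we have written the characteristic polynomial
\[
\Lambda(Z,X,T) = T^{\ell+1} + C_\ell T^\ell + \cdots + C_0,
\]
and that for $(\omega,x) \in \Omega \times \R^k$ all roots of $\Lambda(\omega,x,T)$ are real (it is the characteristic polynomial of a real symmetric matrix), so that the number ${\rm index}(\la \omega, {\mathcal Q}^h\ra(\cdot,x))$ equals the number of sign variations in the sequence
\[
\s(C_0(\omega,x)), -\s(C_1(\omega,x)), \ldots, (-1)^\ell \s(C_\ell(\omega,x)), +1.
\]
The key point is that this sign-variation count is a function only of the sign vector $(\s(C_0(\omega,x)),\ldots,\s(C_\ell(\omega,x))) \in \{0,1,-1\}^{\mathcal C}$; it does not otherwise depend on $(\omega,x)$.

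First I would define, for each $0 \leq j \leq \ell+1$, the set $\Sigma_j \subset \{0,1,-1\}^{\mathcal C}$ of all sign conditions $\sigma$ on ${\mathcal C}$ such that the number of sign variations in the sequence $\sigma(C_0), -\sigma(C_1), \ldots, (-1)^\ell\sigma(C_\ell), +1$ is at most $j$ (using the standard convention for counting sign variations in a sequence containing zeros, namely deleting the zeros first, which is exactly the convention under which Descartes' rule as cited applies to real-rooted polynomials). Then I would set
\[
D_j = \bigcup_{\sigma \in \Sigma_j} \RR(\sigma, \R^{m+k}),
\]
which is a ${\mathcal C}$-closed semi-algebraic set: each $\RR(\sigma)$ is cut out by a conjunction of atoms $C_i = 0$, $C_i \geq 0$, $C_i \leq 0$ (replacing $\s(C_i) = 1$ by $C_i > 0$ would not be closed, but one checks that $\Sigma_j$ is closed under passing to a less restrictive sign pattern in the appropriate sense, so that the union over $\Sigma_j$ of the open realizations equals the union of the corresponding closed basic sets; alternatively, and more simply, one notes that the condition ``index $\leq j$'' is a closed condition on the symmetric matrix and hence the corresponding sign patterns assemble into a closed description). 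Thus $D_j$ is ${\mathcal C}$-closed, and by construction a point $(\omega,x) \in \Omega \times \R^k$ lies in $D_j$ if and only if ${\rm index}(\la\omega,{\mathcal Q}^h\ra(\cdot,x)) \leq j$.

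It then remains to intersect with $F = \Omega \times V$. For $(\omega,x) \in F$ we have $\omega \in \Omega$, so the Descartes characterization applies, and hence
\[
F_j = \{(\omega,x) \in F \mid {\rm index}(\la\omega,{\mathcal Q}^h\ra(\cdot,x)) \leq j\} = F \cap D_j,
\]
which is exactly the assertion of the lemma. I would also record that $D_0 \subset D_1 \subset \cdots \subset D_{\ell+1} = \R^{m+k}$ (since $\Sigma_0 \subset \cdots \subset \Sigma_{\ell+1}$ and $\Sigma_{\ell+1}$ is everything, the top index being at most $\ell+1$), which matches the filtration $F_0 \subset \cdots \subset F_{\ell+1} = F$ noted earlier.

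The only genuinely delicate point — and the one I would be most careful about — is the handling of the degenerate sign conditions, i.e. the case where some $C_i(\omega,x) = 0$, so that the characteristic polynomial has a zero coefficient. Here one must make sure that the convention for counting sign variations used in the statement of Descartes' rule (for a polynomial with all real roots, the exact number of positive roots counted with multiplicity equals the number of sign variations after deleting zero coefficients) is the one being used to define $\Sigma_j$, and that with this convention the resulting $D_j$ is indeed ${\mathcal C}$-closed rather than merely ${\mathcal C}$-semi-algebraic. This is a routine but slightly fiddly verification: ``index $\leq j$'' defines a closed subset of the space of real symmetric $(\ell+1)\times(\ell+1)$ matrices (it is the set where the $(j+1)$-st eigenvalue in non-decreasing order is $\geq 0$), its preimage under the continuous map $(\omega,x)\mapsto M_{\omega\cdot{\mathcal Q}^h}$ is closed, and this preimage is a union of realizations of sign conditions on ${\mathcal C}$; a closed set that is a union of sign-condition realizations is automatically ${\mathcal C}$-closed, which gives the result with no case analysis at all.
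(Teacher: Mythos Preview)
Your approach matches the paper's, which simply writes \qed and treats the lemma as immediate from the Descartes characterization; you have correctly spelled out what that entails by defining $\Sigma_j$ and $D_j=\bigcup_{\sigma\in\Sigma_j}\RR(\sigma)$. Your argument (a) --- that zeroing out an entry of the sign vector can only \emph{decrease} the sign-variation count, so $\Sigma_j$ is downward closed in the partial order $0\leq\pm1$, whence $\bigcup_{\sigma\in\Sigma_j}\RR(\sigma)=\bigcup_{\sigma\in\Sigma_j}\RR(\bar\sigma)$ is ${\mathcal C}$-closed --- is correct and is the right justification.

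Your ``simpler'' alternative (b), however, is wrong: it is \emph{not} true in general that a topologically closed set which is a union of sign-condition realizations on ${\mathcal C}$ is automatically ${\mathcal C}$-closed. A one-variable counterexample: take ${\mathcal C}=\{X,\,X(X-1)\}$ and $S=[1,\infty)=\RR(+,0)\cup\RR(+,+)$. This $S$ is closed and ${\mathcal C}$-semi-algebraic. But both polynomials in ${\mathcal C}$ vanish at $0$, so the point $0$ satisfies every atom $P=0$, $P\geq 0$, $P\leq 0$ with $P\in{\mathcal C}$; hence every nonempty ${\mathcal C}$-closed set contains $0$, and $[1,\infty)$ cannot be ${\mathcal C}$-closed. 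So you should drop argument (b) and rely on the monotonicity argument (a), which you stated first anyway.
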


\begin{notation}
let
\[
0<\eps_{0} \ll \cdots \ll \eps_{\ell+1} \ll 1.
\]
be infinitesimals.
For $0 \leq j \leq \ell+1$, 
we
denote by $\R_j$ the field $\R\la \eps_{\ell+1} \ldots \eps_j\ra$.

Let 
\[{\mathcal C}'_j=\{P\pm \eps_j, P  \in {\mathcal C}\}. \]

Given $\rho \in {\rm Sign}(\mathcal C)$,
and $0 \leq j \leq \ell+1$, 
we denote by $\RR(\rho^c_j) \subset 
\R_j^{m+k}$ 
the ${\mathcal C}'_j$-semi-algebraic set defined 
by the formula $\rho^c_j$ obtained by taking the 
conjunction of
$$
\begin{array}{l}
 -\eps_j \leq P \leq \eps_j \mbox{ for each } P \in {\mathcal C}
\mbox{ such that } \rho(P) = 0, \cr
P \geq -\eps_{j},  \mbox{ for each } P \in {\mathcal C}
\mbox{ such that } \rho(P) = 1, \cr
P \leq \eps_{j}, \mbox{ for each }  P \in {\mathcal C} 
\mbox{ such that } \rho(P) = -1.
\end{array}
$$

Similarly,
we denote by
$\RR(\rho^o_j) \subset 
\R_j^{m+k}$ 
the ${\mathcal C}'_j$- semi-algebraic set defined 
by the formula $\rho^o$ obtained by taking the 
conjunction of
$$
\begin{array}{l}
-\eps_j < P < \eps_j \mbox{ for each } P \in {\mathcal C}
\mbox{ such that } \rho(P) = 0, \cr
P > - \eps_{j},  \mbox{ for each } P \in {\mathcal C}
\mbox{ such that } \rho(P) = 1, \cr
P < \eps_{j}, \mbox{ for each }  P \in {\mathcal C} 
\mbox{ such that } \rho(P) = -1.
\end{array}
$$
\end{notation}

Since the semi-algebraic sets $D_j$ defined above in 
Lemma \ref{lem:defofD_j} 
are ${\mathcal C}$-semi-algebraic sets, each $D_j$ is defined
by a disjunction of sign conditions on ${\mathcal C}$. More precisely, 
for each $0 \leq j \leq \ell+1$  let $D_j$ be defined by the formula
$$
\displaylines{
D_j = \bigcup_{\rho \in \Sigma_j} \RR(\rho),
}
$$
where $\Sigma_j \subset {\rm Sign}({\mathcal C})$.

For each $j, 0 \leq j \leq \ell+1$, let

\begin{eqnarray*}
D_j^o &=& \bigcup_{\rho \in \Sigma_j} \RR(\rho_j^o), \\
D_j^c &=& \bigcup_{\rho \in \Sigma_j} \RR(\rho_j^c), \\
D_j' &=& \E(D_j^c,
\R_{j-1}) \setminus D_{j-1}^o, \\
F_j' &=& \E(F,
\R_{j-1}) \cap D_j',\\
\end{eqnarray*}
where we denote by $D_{-1}^o = \emptyset $.

\begin{lemma}
\label{lem:local}
For 
$0 \leq j+1 < i \leq \ell+1$,
\[
\E(D_i', 
\R_{j-1}) 
\cap D_j' = \emptyset.
\]
\end{lemma}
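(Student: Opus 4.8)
The plan is to unwind the definitions of the sets involved and reduce the claim to a statement about incompatible sign conditions on the family $\mathcal C$, combined with the fact that $\eps_i \ll \eps_{i-1}$.

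First I would recall that $D_i' = \E(D_i^c,\R_{i-1}) \setminus D_{i-1}^o$, so in particular $D_i' \subset \E(D_i^c,\R_{i-1})$, and $D_i^c = \bigcup_{\rho \in \Sigma_i} \RR(\rho_i^c)$ is contained in the set where each $P \in \mathcal C$ satisfies $-\eps_i \le P \le \eps_i$ or $P \ge -\eps_i$ or $P \le \eps_i$ according to the sign $\rho(P)$. Symmetrically, since $j+1 < i$, we have $D_j' = \E(D_j^c,\R_{j-1}) \setminus D_{j-1}^o$, and the key point is that a point of $D_j'$ which is {\em not} in $D_{j-1}^o$ must in fact satisfy a {\em strict} sign condition on $\mathcal C$: for each $P \in \mathcal C$, either $|P| < \eps_{j-1}$ fails — but then... — more precisely, I would argue that $\E(D_j^c,\R_{j-1})\setminus D_{j-1}^o$ forces, for every $P\in\mathcal C$, that $P$ lies in the closed $\eps_j$-band prescribed by some $\rho\in\Sigma_j$ while simultaneously {\em not} lying in the open $\eps_{j-1}$-band prescribed by the same $\rho$ (viewed in $\Sigma_{j-1}$, using $\Sigma_j \subset \mathrm{Sign}(\mathcal C) \supset \Sigma_{j-1}$ appropriately); since $\eps_j \ll \eps_{j-1}$, the only way both can hold is impossible unless the relevant sign is strict, i.e. $\rho(P) \ne 0$ and $|P|$ is bounded below by a positive element of $\R_{j-1}$.

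The heart of the argument is then the following comparison. Suppose $z \in \E(D_i',\R_{j-1}) \cap D_j'$. From membership in $D_j'$ we extract, as above, a sign condition $\rho \in \Sigma_j$ with $z \in \RR(\rho_j^c)$ and $z \notin \RR(\rho_{j-1}^o)$; the latter forces, for at least one $P \in \mathcal C$, that $P(z)$ violates the open $\eps_{j-1}$-inequality, which (given $z$ also satisfies the closed $\eps_j$-inequality and $\eps_j \ll \eps_{j-1}$) is only possible if $\rho(P) \ne 0$ and in fact $|P(z)| \geq \eps_{j}$ up to the relevant infinitesimal scale, so $|P(z)|$ is not infinitesimally small relative to $\R_{j-1}$... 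Meanwhile, from $z \in \E(D_i',\R_{j-1})$ we have $z \in \E(D_i^c,\R_{i-1})$ extended down to $\R_{j-1}$, and since $D_i^c$ prescribes $|P| \le \eps_i$ or a band of width $\eps_i$ for {\em every} $P \in \mathcal C$ under some $\rho' \in \Sigma_i$, and $\eps_i \ll \eps_j \ll \eps_{j-1}$, each $P(z)$ lies within $\eps_i$ of the band prescribed by $\rho'$. Comparing: the coordinate $P$ forced by the $D_j'$-side to have $|P(z)|$ bounded below away from the $\eps_{j-1}$ scale must, on the $D_i'$-side, satisfy $\rho'(P) \ne 0$ with $|P(z)| \ge \eps_{i-1}$-ish; there is no contradiction yet from a single coordinate, so the real contradiction comes from counting sign variations / indices: membership in $F_i'$ (resp.\ $F_j'$) via $D_i'$ (resp.\ $D_j'$) pins down $\mathrm{index}(\la\omega,\mathcal Q^h\ra(\cdot,x)) $ to $\le i-1$ strictly below $i$ on the $D_i'$ side after removing $D_{i-1}^o$, while the $D_j'$ side pins it to be exactly among the strata giving index in a range incompatible with $i-1$ when $j+1 < i$. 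I would make this precise by showing the strict sign conditions extracted from the two sides assign {\em different} signs to some $C_t \in \mathcal C$, which is impossible for a single point $z$.

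The step I expect to be the main obstacle is the careful bookkeeping of which strict sign condition on $\mathcal C$ is forced by ``lies in the closed $\eps_j$-band but not the open $\eps_{j-1}$-band,'' and verifying that the $\eps$-orderings $\eps_i \ll \eps_j \ll \eps_{j-1}$ genuinely produce incompatible sign vectors on $\mathcal C$ for the two sides — this is where the hypothesis $j+1 < i$ (i.e.\ the strata $F_{j}\setminus F_{j-1}$ and $F_i \setminus F_{i-1}$ are not adjacent) must be used, since adjacent strata {\em can} have overlapping infinitesimal thickenings. I would handle this by phrasing everything in terms of $\lim_{\eps_{j-1}}$ (and iterated limits): applying $\lim_{\eps_{j-1}}\circ\cdots$ to a hypothetical point $z$ in the intersection would land in $\E(D_i^c \cap D_j^c, \R')$ for suitable $\R'$ with the index simultaneously $\le i-1$ and forced into the stratum of index exactly something $\le j \le i-2$, and then showing $D_i^c$ with index $\le i-1$ disjoint from $D_{j}^c \setminus D_{j-1}^o$ reduces to the elementary fact that $\mathrm{Sign}(\mathcal C)$-realizations for distinct sign conditions are disjoint. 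Once that reduction is in place the remainder is routine transfer-principle manipulation.
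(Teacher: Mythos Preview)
Your argument has the ordering of the infinitesimals backwards. In the paper $0<\eps_0\ll\cdots\ll\eps_{\ell+1}\ll 1$, so for $j+1<i$ one has $\eps_i\gg\eps_{i-1}\gg\eps_j\gg\eps_{j-1}$, not $\eps_i\ll\eps_j\ll\eps_{j-1}$ as you write. Every step in your sketch that compares band widths (``closed $\eps_j$-band inside open $\eps_{j-1}$-band'', ``$D_i^c$ prescribes $|P|\le\eps_i$ with $\eps_i\ll\eps_j$'', etc.) therefore goes the wrong way, and the coordinate-by-coordinate contradiction you are hunting for never materializes---as you yourself note (``there is no contradiction yet from a single coordinate'').

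More fundamentally, you are missing the one-line argument. The sets $D_j$ form a filtration $D_0\subset D_1\subset\cdots$ because they are defined by \emph{unions} of sign conditions $\Sigma_0\subset\Sigma_1\subset\cdots$ (index $\le j$ implies index $\le j+1$). Since $j+1<i$ gives $j\le i-2$, we have $\Sigma_j\subset\Sigma_{i-1}$, and for each $\rho\in\Sigma_j$ the closed $\eps_j$-relaxation $\RR(\rho_j^c)$ sits inside the open $\eps_{i-1}$-relaxation $\RR(\rho_{i-1}^o)$ simply because $\eps_j<\eps_{i-1}$. Hence $\E(D_j^c,\R_{j-1})\subset\E(D_{i-1}^o,\R_{j-1})$. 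But $D_i'=\E(D_i^c,\R_{j-1})\setminus\E(D_{i-1}^o,\R_{j-1})$ is by definition disjoint from $\E(D_{i-1}^o,\R_{j-1})$, hence from $\E(D_j^c,\R_{j-1})\supset D_j'$. No sign-variation counting, no index analysis, no limits are needed; the whole content is the chain of inclusions $D_{j-1}^o\subset\E(D_j^c,\R_{j-1})\subset\E(D_{i-1}^o,\R_{j-1})\subset\E(D_i^c,\R_{j-1})$.
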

\begin{proof}
The inclusions
$$
\displaylines{
D_{j-1} \subset D_j \subset D_{i-1} \subset D_i, \cr
D_{j-1}^o \subset 
\E(D_j^c,
\R_{j-1}) \subset  
\E(D_{i-1}^o,\R_{j-1}) \subset 
\E(D_i^c,
\R_{j-1})
}
$$
follow directly from the definitions of the sets 
\[
D_i,D_j,D_{j-1},D_i^c,D_j^c,D_{i-1}^o, D_{j-1}^o,
\]
and the fact that
\[
\eps_i \gg \eps_{i-1} \gg  \eps_j \gg \eps_{j-1}.
\]

It follows immediately that
\[
D_i' = \E(D_i^c,
\R_{j-1}) \setminus 
\E(D_{i-1}^o,
\R_{j-1})
\]
is disjoint from $\E(D_j^c,
\R_{j-1})$,
and hence also from $D_j'$.
\end{proof}

We now associate to each $F_j'$  
an $\Sphere^{\ell - j}$-bundle as follows.

For each 
$(\omega,x) \in F_j'' = \E(F_j,\R_{j-2})
\setminus F_{j-1}'$, let 
$L_j^+(\omega,x) \subset \R_{j-2}^{\ell+1}$ 
denote the sum of the non-negative eigenspaces of 
$\la \omega , {\mathcal Q}^h \ra (\cdot,x)$ (i.e. $L_j^+(\omega,x)$ is the largest linear
subspace  of $\R_{j-2}^{\ell+1}$ on which $\la \omega , {\mathcal Q}^h \ra (\cdot,x)$ is positive 
semi-definite). 
Since  ${\rm index}(\la \omega ,{\mathcal Q}^h\ra (\cdot,x)) = j$ stays invariant as
$(\omega,x)$ varies over $F_j''$,
$L_j^+(\omega,x)$ varies continuously with $(\omega,x)$.

Let
\[
\lambda_0(\omega,x) \leq \cdots \leq \lambda_{j-1}(\omega,x) < 0 \leq \lambda_j(\omega,x) \leq \cdots \leq \lambda_{\ell}(\omega,x)
\]
be the eigenvalues of $\la \omega , {\mathcal Q}^h \ra(\cdot,x)$ for $(\omega,x) \in 
F_j''$.
There is a continuous extension of the map sending
$(\omega,x) \mapsto L_j^+(\omega,x)$ to 
$(\omega,x) \in \E(F_j',\R_{j-2})$.
To see this observe that for $(\omega,x) \in F_j''$ 
the block of the first $j$ (negative) eigenvalues,
$\lambda_0(\omega,x) \leq \cdots \leq \lambda_{j-1}(\omega,x)$,
and hence the sum of the eigenspaces corresponding to them can be extended
continuously to any infinitesimal neighborhood of 
$F_j''$, and in particular to
$\E(F_j',\R_{j-2})$. Now $L_j^+(\omega,x)$ is the orthogonal
complement of the sum of the eigenspaces 
corresponding to the block of negative eigenvalues,
$\lambda_0(\omega,x) \leq \cdots \leq \lambda_{j-1}(\omega,x)$.

We 
denote by $C_j'\subset F_j' \times \R_{j-1}^{\ell+1}$ 
the semi-algebraic set defined by

\[
C_j'  =  \{(\omega,y;x) \;\mid\; (\omega,x) \in F_j', 
y \in L_j^+(\omega,x), |y| = 1\}.
\]

Abusing notation, we denote by $\varphi_1$ the projection $C_j' \rightarrow F_j'$,
which makes $C_j'$ the total space of a $\Sphere^{\ell - j}$-bundle
over $F_j'$.

The following proposition, expressing in precise terms the fact that
$C'_j\cap C'_{j-1}$ is a  $\Sphere^{\ell-j}$-bundle over 
$F_j \cap F_{j-1}'$ under the map $\varphi_1$,
follows directly
from the definition of the sets $C_j'$ and $F_j'$.

\begin{proposition} 
\label{pro:goodspherebundle}
For every $j$ from $\ell$ to 1,
%MF simplifies
%\[
%C_{j-1}' \cap \E(C_{j}',\R_{j-2}) = \pi_j^{-1}( \E(F_j',\R_{j-2}) \cap F_{j-1}' ),
%\]
%and 
%\[
%\pi_j|_{C_{j-1}' \cap \E(C_{j}',\R_{j-2})}:C_{j-1}' \cap \E(C_{j}',\R_{j-2}) \rightarrow 
%\E(F_j',\R_{j-2}) \cap  F_{j-1}'
%\]
$C_{j-1}' \cap \E(C_{j}',\R_{j-2})$
is a  $\Sphere^{\ell-j}$-bundle over 
$\E(F_j',\R_{j-2})  \cap F_{j-1}'$ under the map  $\varphi_1$. \qed
\end{proposition}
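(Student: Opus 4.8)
The plan is to show that Proposition \ref{pro:goodspherebundle} is essentially a formal consequence of the definitions of the sets $C_j'$, $F_j'$, together with the continuity of the family of subspaces $L_j^+(\omega,x)$ established in the paragraphs immediately preceding the statement. First I would unwind the definitions: by construction $C_j' = \{(\omega,y;x)\mid (\omega,x)\in F_j',\ y\in L_j^+(\omega,x),\ |y|=1\}$, and its extension $\E(C_j',\R_{j-2})$ is defined by the same formula over the larger field. Hence a point $(\omega,y;x)$ lies in $C_{j-1}'\cap\E(C_j',\R_{j-2})$ if and only if $(\omega,x)\in\E(F_j',\R_{j-2})\cap F_{j-1}'$, $|y|=1$, and $y$ lies simultaneously in $L_{j-1}^+(\omega,x)$ and in $L_j^+(\omega,x)$ (the latter viewed via the continuous extension to $\E(F_j',\R_{j-2})$). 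So the projection $\varphi_1$ onto the first factor has base exactly $\E(F_j',\R_{j-2})\cap F_{j-1}'$, and the task reduces to identifying the fibre over each such $(\omega,x)$.

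Next I would pin down the fibre. The key point is that over the relevant base, the quadratic form $\la\omega,{\mathcal Q}^h\ra(\cdot,x)$ has index exactly $j$: indeed, on $F_{j-1}'$ one stays in an infinitesimal closed neighbourhood of $F_{j-1}\setminus F_{j-2}$, where the index is $j-1$ \dots but we must be careful, since we also intersect with $\E(F_j',\R_{j-2})$, which sits near $F_j\setminus F_{j-1}$ where the index is $j$. I would argue, exactly as in the construction of the continuous extension of $L_j^+$, that on $\E(F_j',\R_{j-2})\cap F_{j-1}'$ the block of the first $j$ eigenvalues $\lambda_0\le\cdots\le\lambda_{j-1}$ is well separated (the first $j-1$ being genuinely negative and $\lambda_{j-1}$ being, in the limit, $\le 0$), so that $L_j^+(\omega,x)$ — the orthogonal complement of the span of the first $j$ eigenvectors — is a well-defined subspace of dimension $\ell+1-j$ varying continuously with $(\omega,x)$. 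Since $L_j^+(\omega,x)\subset L_{j-1}^+(\omega,x)$ (more non-negative eigenspaces are thrown in as the index drops), the intersection $L_{j-1}^+(\omega,x)\cap L_j^+(\omega,x)$ is simply $L_j^+(\omega,x)$, and its unit vectors form a sphere $\Sphere^{\ell-j}$. This gives the fibrewise description.

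Finally I would assemble this into an honest semi-algebraic sphere bundle: the continuity of $(\omega,x)\mapsto L_j^+(\omega,x)$ over the (closed, bounded) base, together with a standard local-triviality argument for the unit sphere bundle of a continuously varying subbundle of the trivial bundle $(\E(F_j',\R_{j-2})\cap F_{j-1}')\times\R_{j-1}^{\ell+1}$, yields that $\varphi_1\colon C_{j-1}'\cap\E(C_j',\R_{j-2})\to\E(F_j',\R_{j-2})\cap F_{j-1}'$ is a semi-algebraic $\Sphere^{\ell-j}$-bundle. All of this is ``follows directly from the definitions'' as the paper asserts, so the write-up is short.

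The main obstacle I anticipate is the bookkeeping with the tower of infinitesimals $\eps_{\ell+1}\gg\cdots\gg\eps_0$ and the corresponding fields $\R_j$: one must check that the various extensions $\E(\cdot,\R_{j-2})$, $\E(\cdot,\R_{j-1})$ are taken over compatible fields so that the intersection $C_{j-1}'\cap\E(C_j',\R_{j-2})$ lives in a single ambient space $F_{j-1}'\times\R_{j-1}^{\ell+1}$, and that the continuous extension of $L_j^+$ to $\E(F_j',\R_{j-2})$ (constructed in the text only on a neighbourhood of $F_j''$) indeed covers the part of $F_{j-1}'$ that meets $\E(F_j',\R_{j-2})$. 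Once the indexing is consistent, the geometric content is exactly the inclusion $L_j^+\subset L_{j-1}^+$ of eigenspace sums, which is immediate.
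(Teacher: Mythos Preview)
Your proposal is correct and matches the paper's approach: the paper gives no proof at all (the statement carries a bare \qed\ and the preceding sentence says it ``follows directly from the definition of the sets $C_j'$ and $F_j'$''), and what you have written is exactly the unwinding of those definitions, with the key geometric observation that $L_j^+(\omega,x)\subset L_{j-1}^+(\omega,x)$ so the fibre is the unit sphere in $L_j^+$. Your caveat about the bookkeeping with the fields $\R_j$ is well taken---the paper is indeed a bit loose there---but the content is just as you describe.
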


We also have the following.
\begin{proposition}
\label{pro:homotopy3}
The semi-algebraic set 
$$
C' = \bigcup_{j=0}^{\ell+1} \E(C_j',\R_0)
$$ 
is homotopy equivalent to $\E(C,\R_0)$.
\end{proposition}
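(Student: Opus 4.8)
The plan is to prove Proposition~\ref{pro:homotopy3} by showing that $\E(C,\R_0)$ deformation retracts onto $C'$, using the sequence of filtration levels and imitating, in the infinitesimal setting, the construction already carried out in the proof of Proposition~\ref{pro:homotopy1}. The point is that $C'$ is built from the $\E(C_j',\R_0)$, where each $C_j'$ is the $\Sphere^{\ell-j}$-bundle over the slightly fattened-and-pruned base $F_j'$, and these pieces are glued along the sphere bundles of Proposition~\ref{pro:goodspherebundle}; we must recognize this union as sitting inside $\E(C,\R_0)$ with the complementary directions being contractible fibers of a Vietoris--Smale type, so that the inclusion is a homotopy equivalence.

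First I would exhibit an explicit semi-algebraic map $\E(C,\R_0)\to C'$ (or a retraction the other way) level by level. Recall $C=\bigcup_{j=0}^{\ell+1}C_j$ with $C_j$ the $\Sphere^{\ell-j}$-bundle over $F_j\setminus F_{j-1}$; extending scalars to $\R_0$ and then observing that $F_j'=\E(F,\R_{j-1})\cap D_j'$ is an infinitesimal thickening of $F_j\setminus F_{j-1}$ (by Lemma~\ref{lem:defofD_j} together with the definitions of $D_j^o,D_j^c,D_j'$ and the ordering $\eps_0\ll\cdots\ll\eps_{\ell+1}$), I would check that over $\E(F_j\setminus F_{j-1},\R_0)$ the fiber of $\E(C,\R_0)\to\E(F,\R_0)$ is exactly $\E(L_j^+(\omega,x)\cap\Sphere^\ell,\R_0)$, which is the same $\Sphere^{\ell-j}$ that is the fiber of $\E(C_j',\R_0)$. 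So the only thing that changes in passing from $\E(C,\R_0)$ to $C'$ is replacing the base stratum $F_j\setminus F_{j-1}$ by the closed-up version $F_j'$, while keeping track that the eigenspace decomposition extends continuously to the infinitesimal neighborhood (this continuity is exactly what was argued just above the definition of $C_j'$, using that the block of the first $j$ negative eigenvalues stays separated from $0$). The overlaps $\E(C_{j-1}',\R_0)\cap\E(C_j',\R_0)$ are the sphere bundles of Proposition~\ref{pro:goodspherebundle}, so $C'$ is a genuine semi-algebraic set and its pieces are glued consistently.

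Next, the homotopy equivalence. The cleanest route is to construct, by downward induction on $j$ exactly as in the proof of Proposition~\ref{pro:homotopy1}, a chain of deformation retractions $\E(B_{j+1},\R_0)'\searrow\E(B_j,\R_0)'$ where now $B_j$ is replaced by the ``primed'' version $B_j'=\bigcup_{i\geq j}\E(C_i',\R_0)\cup\varphi_1^{-1}(\E(F_{j-1}',\R_0))$, so that $B_{\ell+1}'=\E(C,\R_0)$ (up to the homotopy equivalence $B\simeq C$ from Proposition~\ref{pro:homotopy1} extended to $\R_0$) and $B_0'=C'$. At each step one retracts the fiber over a point of $F_j'\setminus F_{j-1}'$ onto $L_j^+(\omega,x)\cap\Sphere^\ell$ by the same explicit formula $(w_0,\dots,w_\ell)\mapsto(tw_0,\dots,tw_{j-1},t'w_j,\dots,t'w_\ell)$ with $t'=\bigl((1-t^2\sum_{i<j}w_i^2)/\sum_{i\geq j}w_i^2\bigr)^{1/2}$; it is well-defined by the same argument about orthogonal complements, and it is well-defined in the infinitesimal setting because the relevant eigenvalue block extends continuously as noted. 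Lemma~\ref{lem:local} guarantees that these strata do not interact across non-adjacent levels, so the retractions over different strata can be performed simultaneously and patch to a global deformation retraction. Concatenating over $j=\ell,\dots,0$ gives $\E(C,\R_0)\simeq C'$, and combining with $\E(B,\R_0)\simeq\E(C,\R_0)$ (Proposition~\ref{pro:homotopy1} applied over $\R_0$) and $\E(A^h,\R_0)\simeq\E(B,\R_0)$ (Proposition~\ref{pro:homotopy2}) finishes the chain.

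The main obstacle, and where I would spend the most care, is verifying that the base changes are innocuous at the level of homotopy type: concretely, that $F_j'=\E(F,\R_{j-1})\cap D_j'$ really is homotopy equivalent to $F_j\setminus F_{j-1}$ under extension of scalars (so that replacing one by the other does not alter the homotopy type of the total space), and that the continuous extension of $(\omega,x)\mapsto L_j^+(\omega,x)$ to the infinitesimal neighborhood $\E(F_j',\R_{j-2})$ is compatible with the retractions so that the maps glue along $F_j'\cap F_{j-1}'$. The first point is a standard but slightly delicate application of the local conic structure of semi-algebraic sets together with the Hardt triviality theorem (or the fact that a set and a sufficiently small closed infinitesimal neighborhood of it, minus a smaller open neighborhood of a closed subset, are semi-algebraically homeomorphic over an appropriate extension), applied to the ${\mathcal C}$-sign conditions whose realizations make up the $D_j$. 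The second point reduces to the continuity statement already established in the text above the definition of $C_j'$, so the real work is bookkeeping: making sure the indices $j-1,j-2$ in the various extensions line up so that all the sets in question live over a common real closed field when we perform each individual retraction.
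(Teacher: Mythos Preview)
Your approach is quite different from the paper's, and as written it is tangled in exactly the places you flag as obstacles. The paper does not attempt any explicit deformation retraction between $\E(C,\R_0)$ and $C'$. Instead it argues by specialization of the infinitesimals: set $C_{-1}=C'$ and inductively $C_i=\lim_{\eps_i}C_{i-1}$ for $i=0,1,\ldots,\ell+1$, so that after all the limits one recovers $C$. For each $i$, the one-parameter family $C_{i-1,t}$ obtained by replacing $\eps_i$ by a real parameter $t>0$ is monotone, i.e.\ $C_{i-1,t_1}\subset C_{i-1,t_2}$ for $0<t_1<t_2\ll1$; this is immediate from the shape of the formulas $\rho_j^c,\rho_j^o$ defining $D_j^c,D_j^o$. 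A general lemma (\cite[Lemma~16.17]{BPRbook2}) on monotone families of closed bounded semi-algebraic sets then gives that $\E(C_i,\R_i)$ is homotopy equivalent to $C_{i-1}$, and chaining these over $i$ yields the result. No fiberwise geometry, no eigenvector bookkeeping, no worry about how the pieces glue.

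Your plan has a concrete stumbling block: you propose to deformation retract $\E(C,\R_0)$ onto $C'$, but the containment goes the other way. In fact $\E(C,\R_0)\subset C'$ while $C'\not\subset\E(C,\R_0)$: over a point of $F_j'\setminus F_j$ (such points exist, since $D_j^c$ properly contains $D_j$) the fiber of $\E(C_j',\R_0)$ is the continuously extended $L_j^+\cap\Sphere^\ell$, a sphere of dimension $\ell-j$, whereas the actual index there is some $j'>j$ and the $C$-fiber is the strictly smaller $L_{j'}^+\cap\Sphere^\ell$. For the same reason your interpolating sets $B_j'$ are problematic: they mix preimages $\varphi_1^{-1}(\E(F_{j-1}',\R_0))\subset\E(B,\R_0)$ with the $\E(C_i',\R_0)$, but the latter are not contained in $\E(B,\R_0)$ either, so the chain does not sit inside a single ambient space on which your fiberwise retractions act, and the claimed identification $B_{\ell+1}'=\E(C,\R_0)$ fails even up to the equivalence $B\simeq C$. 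The obstacle you name at the end---homotopy invariance under the base change from $F_j\setminus F_{j-1}$ to $F_j'$---is precisely the content of the proposition, and the paper's limit-and-monotonicity argument is the device that handles it without any of this bookkeeping.
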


\begin{proof}
First observe that $C = \lim_{\eps_{\ell+1}} C'$ where $C$ is the 
semi-algebraic set defined in (\ref{eqn:definition_of_C}) above. 

Now let
$$
\displaylines{
C_{0} = \lim_{\eps_0} C', \cr
C_i = \lim_{\eps_i} C_{i-1}, 1 \leq i \leq \ell+1.
}
$$

Notice that each $C_i$ is a closed and bounded semi-algebraic set.
Also, let $C_{i-1,t} \subset 
\R_{i-1}^{\ell+k}$
be the semi-algebraic set obtained by 
replacing $\eps_i$ in the definition of $C_{i-1}$
by the variable $t$.
Then there exists $t_0 > 0$, such that for all $0 < t_1 < t_2 \leq t_0$,
$C_{i-1,t_1} \subset C_{i-1,t_2}$. 

It follows (see \cite[Lemma 16.17]{BPRbook2}) that for each $i$,
$0 \leq i \leq \ell+1$,
 $\E(C_i,
\R_i)$ is homotopy equivalent to
$C_{i-1}$ (where $C_{-1} = C'$).
 
The proposition is now a consequence of Proposition \ref{pro:homotopy1}.
\end{proof}

\begin{proof}[Proof of Theorem \ref{the:homogeneous}]
In light of Propositions \ref{pro:homotopy1} and \ref{pro:homotopy3},
it suffices to bound the Betti numbers of the semi-algebraic set $C'$.
Now, 
$$
\displaylines{
C' = \bigcup_{j=0}^{\ell+1} \E(C_j',\R_0).
}
$$ 

By \eqref{eqn:MV1} it suffices to
bound the Betti numbers of the various intersections amongst the sets
$\E(C_j',\R_0)$'s. 
However, by Lemma \ref{lem:local}, the only non-empty intersections
among $\E(C_j',\R_0)$'s are of the form $\E(C_j',\R_0) \cap \E(C_{j+1}',\R_0)$. 
Using Proposition \ref{pro:sphere_bundle} and Proposition \ref{pro:goodspherebundle}
we have that 
$b(C_j')$ (resp.  $b(C_j' \cap C_{j+1}')$) is bounded by
$2 b(F_j')$ (resp.  $2 b(\E(F_j',\R_0) \cap \E(F_{j+1}',\R_0))$).

Finally, each $F_j'$ (resp. $\E(F_j',\R_0) \cap \E(F_{j+1}',\R_0)$) is a bounded 
${\mathcal P}'_j$-closed semi-algebraic set, where
${\mathcal P}'_j = \R[Z_1,\ldots,Z_m,X_1,\ldots,X_k]$ is defined by
$$
\displaylines{
{\mathcal P}'_j = {\mathcal P} \cup {\mathcal C}'_j\cup  
\bigcup_{i=1}^{m}  \{Z_i\}.
}
$$
Note that 
$$
\displaylines{
\deg(P) \leq d, P \in {\mathcal P}, \cr
\deg(P) \leq d(\ell+1),  P \in {\mathcal C}'_j, \cr
\#({\mathcal P}) = s, \cr
\#({\mathcal C}'_j) = 2(\ell+1).
}
$$

Now applying Theorem \ref{the:B99} we obtain that 
\begin{equation}
\label{eqn:boundonF_j}
b(F_j'), b(\E(F_{j}',\R_0) \cap \E(F_{j+1}',\R_0)) \leq 
(O((s+\ell+m)\ell d))^{m+k}.
\end{equation}

Applying Proposition \ref{pro:sphere_bundle} and (\ref{eqn:boundonF_j}) 
we obtain immediately that

\begin{equation}
\label{eqn:boundonC_j}
b(C_j'), b(\E(C_{j}',\R_0) \cap \E(C_{j+1}',\R_0)) \leq 
(O((s+\ell+m)\ell d))^{m+k}.
\end{equation}

Finally, using inequality \eqref{eqn:MV1} and 
Lemma \ref{lem:local} we get that

\begin{equation}
\label{eqn:boundonC}
b(C') = b(\bigcup_{j=0}^{\ell} C_j') \leq 
\ell^2 (O((s+\ell+m)\ell d))^{m+k}.
\end{equation}

The theorem now follows from Propositions \ref{pro:homotopy3},
\ref{pro:homotopy2} and \ref{pro:homotopy1}.
\end{proof}

\begin{proof}[Proof of Theorem \ref{the:homogeneous2}]
Apply \eqref{eqn:MV2} together with Theorem \ref{the:homogeneous}.
\end{proof}

\subsection{General Case}
We now prove the general version of Theorem \ref{the:homogeneous}.
We follow Notation \ref{not:AhWh}.

\begin{theorem}
\label{the:inhomogeneous}
Let 
$W \subset \R^{\ell} \times \R^k$ be semi-algebraic set defined by
$$
\displaylines{
W = \bigcap_{Q \in {\mathcal Q}}
\{ (y,x) \;\mid\; Q(y,x) \leq 0\; \wedge \; \Phi(x)\},
}
$$
where $\Phi(x)$ is a ${\mathcal P}$-closed formula defining a bounded
${\mathcal P}$-closed semi-algebraic set $V \subset \R^k$.

Then 
\begin{eqnarray}
b(W) &\leq &
\ell^2 (O((s+\ell+m)\ell d))^{m+k}.
\end{eqnarray}
\end{theorem}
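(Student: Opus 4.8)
The plan is to reduce $W$ to the homogeneous situation of Theorem~\ref{the:homogeneous2} by rerunning the proof of §\ref{subsec:homogeneous} in the presence of one extra constraint that is linear (hence ``free'') in the $Y$-variables. Homogenize $\mathcal Q$ in $Y$ as in Notation~\ref{not:AhWh}, obtaining $\mathcal Q^h\subset\R[Y_0,\ldots,Y_\ell,X]$ and the compact set $W^h\subset\Sphere^\ell\times V$ ($V$ is bounded). The map $z\mapsto(1,z)/\sqrt{1+|z|^2}$ identifies $\R^\ell$ with the open upper hemisphere $\{Y_0>0\}\cap\Sphere^\ell$ and, since each $Q^h$ is homogeneous of degree $2$ in $Y$ with $Q^h(1,z,x)=Q(z,x)$, carries $W$ homeomorphically onto $W^h\cap\{Y_0>0\}$. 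Introducing an infinitesimal $\eps$ and using the semi-algebraic local conic structure of $W^h$ along the hyperplane section $\{Y_0=0\}$ (the same kind of argument as in the proof of Proposition~\ref{pro:homotopy3}), one deduces that the closed and bounded set $W^h_\eps:=\E(W^h,\R\la\eps\ra)\cap\{Y_0\ge\eps\}$ is homotopy equivalent to $\E(W^h,\R\la\eps\ra)\cap\{Y_0>0\}\cong\E(W,\R\la\eps\ra)$; hence $b(W)=b(W^h_\eps)$ and it suffices to bound $b(W^h_\eps)$.

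To bound $b(W^h_\eps)$ I would repeat, step by step, the chain of constructions leading to Theorems~\ref{the:homogeneous} and \ref{the:homogeneous2}, everywhere intersecting with $\{Y_0\ge\eps\}$. Thus the Agrachev set $B$ becomes $B^\eps=B\cap\{Y_0\ge\eps\}$, which by the argument of Proposition~\ref{pro:homotopy2} is still homotopy equivalent to $A^h\cap\{Y_0\ge\eps\}$; the base $F=\Omega\times V$ and the sets $F_j,F_j'\subset\R^{m+k}$ are untouched, since they involve only $\omega$ and $x$. The one genuine change is that over $(\omega,x)\in F_j\setminus F_{j-1}$ the fibre of $\varphi_1$ becomes $\{y\in\Sphere^\ell:Y_0\ge\eps,\ \la\omega,\mathcal Q^h\ra(y,x)\ge0\}$, which under the hemisphere identification is the super-level set of an affine quadratic polynomial on $\R^\ell$. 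Such a set is still elementary: it is contractible, empty, or homotopy equivalent to a sphere whose dimension is $\ell-j$ or $\ell-j-1$, the choice being governed by the index of $\la\omega,\mathcal Q^h\ra(\cdot,x)$ together with the sign of this form at the point $e_0=(1,0,\ldots,0)$ (note $\la\omega,\mathcal Q^h\ra(e_0,x)=\sum_i\omega_iQ_i(0,x)$ has degree $\le d$ in $x$), and it varies ``continuously'' exactly as in Lemma~\ref{lem:sphere} and Proposition~\ref{pro:homotopy1}. Consequently the sphere-bundles $C_j'$ get replaced by sphere- or ball-bundles over locally closed pieces of $F_j'$ cut out by the coefficients $C_0,\ldots,C_\ell$ of $\Lambda$ together with finitely many further polynomials of degree $O(\ell d)$; crucially, no polynomial quadratic in $Y$ is introduced, so $m$ — and hence the exponent — is unchanged. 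Applying Proposition~\ref{pro:sphere_bundle}, Theorem~\ref{the:B99} and the Mayer--Vietoris inequalities exactly as in \eqref{eqn:boundonF_j}--\eqref{eqn:boundonC} gives $b(A^h\cap\{Y_0\ge\eps\})\le\ell^2(O((s+\ell+m)\ell d))^{m+k}$, and the same bound for every subfamily of $\mathcal Q^h$; then inequality \eqref{eqn:MV2}, used as in the proof of Theorem~\ref{the:homogeneous2}, yields $b(W^h_\eps)\le\ell^2(O((s+\ell+m)\ell d))^{m+k}=b(W)$.

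The step I expect to be delicate is the fibre analysis in the previous paragraph. In the purely homogeneous case the fibre over a point of $F_j\setminus F_{j-1}$ is, up to homotopy, the $(\ell-j)$-sphere, and $j$ is read off from a single sign vector via Descartes' rule. Once $\{Y_0\ge\eps\}$ is imposed the fibre is the open-hemisphere part of that sphere, i.e.\ a super-level set of an affine quadric, and one must (i) pin down the one extra integer invariant and check that it, and all the degenerate cases, are described by semi-algebraic conditions of degree $O(\ell d)$ in $(\omega,x)$, so that Theorem~\ref{the:B99} can be invoked without spoiling the factor $\ell d$ in the base of the bound, and (ii) verify that the deformation retractions underlying Proposition~\ref{pro:homotopy1}, suitably modified so as to respect $\{Y_0\ge\eps\}$, still assemble the fibre space into a union of bundles over these refined strata. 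The remaining points are routine: choosing a compatibly ordered tuple of infinitesimals (the $\eps$ above together with the deformation infinitesimals of §\ref{subsec:homogeneous}), making the conic-structure and extension arguments of the first paragraph rigorous over an arbitrary real closed field, and checking that ranging over all subfamilies of $\mathcal Q^h$ and applying \eqref{eqn:MV2} costs only a factor that is absorbed into the constant hidden by $O(\cdot)$.
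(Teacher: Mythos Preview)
Your reduction in the first paragraph is correct, but the paper's proof is much shorter and avoids the entire ``delicate step'' you anticipate. The key observation you are missing is that the constraint cutting $W$ down to something bounded can itself be chosen \emph{quadratic} in $Y$, and therefore absorbed into $\mathcal Q$ rather than treated as an extra linear condition on the sphere. Concretely, the paper sets $Q_0=\eps^2(Y_1^2+\cdots+Y_\ell^2)-1$ and defines
\[
W_\eps \;=\; W\cap\bigl(\overline{B}_\ell(0,1/\eps)\times\R^k\bigr)
\;=\;\bigcap_{i=0}^{m}\{(y,x):Q_i(y,x)\le0\}\cap(\R^\ell\times V),
\]
which by conic structure at infinity has the homotopy type of $\E(W,\R\la\eps\ra)$. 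After homogenization the resulting $W_\eps^h\subset\Sphere^\ell\times V$ (defined by $Q_0^h\le0,\ldots,Q_m^h\le0$ and $\Phi$) is the disjoint union of two closed pieces, one for each sign of $Y_0$, each homeomorphic to $W_\eps$; hence $b(W_\eps^h)=2\,b(W)$. Since $Q_0^h=\eps^2(Y_1^2+\cdots+Y_\ell^2)-Y_0^2$ is again homogeneous quadratic in $Y$, Theorem~\ref{the:homogeneous2} applies verbatim to $W_\eps^h$ with $m+1$ quadrics and gives the stated bound. No part of the Agrachev analysis is reopened.

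By contrast, your linear constraint $Y_0\ge\eps$ does not fit the homogeneous-quadratic framework, and so forces you to redo Lemma~\ref{lem:sphere} and Proposition~\ref{pro:homotopy1} with half-space--truncated fibres. That step is real work, not just bookkeeping: the homotopy type of $\{y\in\Sphere^\ell:\la\omega,\mathcal Q^h\ra(y,x)\ge0,\ y_0\ge\eps\}$ is \emph{not} determined by the index $j$ together with the sign of $\la\omega,\mathcal Q^h\ra(e_0,x)$ alone (already for $\ell=1$, $j=1$ one can arrange $q(e_0)=0$ with the truncated fibre either contractible or homotopy equivalent to $\Sphere^0$, depending on how $e_0$ sits relative to the null cone of $q$). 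You would need further invariants, a proof that they are cut out by $O(1)$ many polynomials of degree $O(\ell d)$ in $(\omega,x)$, and a retraction compatible with the half-space --- none of which is supplied. Your programme may be salvageable, but the paper's trick of making the bounding constraint quadratic renders it unnecessary.
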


\begin{proof}
Let $1 \gg \eps > 0$ be an infinitesimal and let
$B_\ell(0,{1}/{\eps})$ denote the closed ball in 
$\R\la\eps\ra^\ell$ centered at the origin and of radius ${1}/{\eps}$.

Let $W_\eps \subset \R^{\ell+k}$ be the set defined by 
\begin{eqnarray*}
W_\eps &=& W \cap \left( B_\ell(0,{1}/{\eps}) \times \R^k\right)  
\end{eqnarray*}

It follows from the local conical structure of semi-algebraic sets at 
infinity \cite[Theorem~9.3.6]{BCR} that $W_\eps$ 
has the same homotopy type as $\E(W,\R\la\eps\ra)$.

Let
$$
Q_0 = \eps^2(Y_1^2 + \cdots + Y_\ell^2) - 1,
$$ and
$W_\eps^h \subset \Sphere^{\ell}\times \R\la\eps\ra^k$ be the
semi-algebraic set defined by

$$
\displaylines{
W_\eps^h = \bigcap_{i=0}^{m}
\{ (y,x) \;\mid\; |y| = 1 \;\wedge \; Q_i^h(y,x) \leq 0\; \wedge \; \Phi(x)\}.
}
$$

It is clear that $W_\eps^h$ is a union of two disjoint,
closed and bounded semi-algebraic sets, each homeomorphic to $W_\eps$. 
Hence, for every $i=0,\ldots,k+\ell-1$
\begin{equation}
\label{eqn:doubling}
b_i(W_\eps^h) =2 b_i(W_\eps) =2 b_i(W).
\end{equation}
The theorem is proved
by applying Theorem \ref{the:homogeneous2} to $W_\eps^h$.
\end{proof}

\subsection{Proof of Theorem \ref{the:main}}
We are now in a position to prove Theorem \ref{the:main}. 
We first need a few preliminary results.

Given a
list of polynomials ${\mathcal A}=\{A_1,\ldots,A_t\}$
with coefficients in 
$\R$, we introduce $t$ infinitesimals,
$1 \gg \delta_1 \gg \cdots \gg \delta_t > 0$.

We define ${\mathcal A}_{>i}=\{ A_{i+1},\ldots, A_t\}$
and
$$ \displaylines{
\Sigma_i= \{A_i=0,A_i=\delta_i,A_i=-\delta_i,A_i\geq
2\delta_i,A_i\leq -2\delta_i\},\cr
\Sigma_{\le i}= \{\Psi \mid
\Psi=\bigwedge_{j=1,\ldots,i} \Psi_i, \Psi_i \in \Sigma_i\}.
}$$
If $\Phi$ is any
${\mathcal A}$-closed formula, 
we denote
by
$\RR_i(\Phi)$ the   extension of
$\RR(\Phi)$ to \\ 
$\R\la \delta_1,\ldots, \delta_i \ra ^k$.
For $\Psi \in \Sigma_{\le i}$, we denote by $\RR_i(\Psi)$ the realization of
$\Psi$ and by $b(\Psi)$ the sum of the Betti numbers of $\RR_i(\Psi)$ .

\begin{proposition}
\label{7:pro:closed}
For every 
${\mathcal A}$-closed 
formula $\Phi$,
$$
b(\Phi)  \leq
\sum_{\Psi \in \Sigma_{\le t}} b(\Psi).
$$
\end{proposition}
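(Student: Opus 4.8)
The plan is to show that the construction replacing each $\RR(\Phi)$ by the union of the sets $\RR_t(\Psi)$, $\Psi \in \Sigma_{\le t}$, does not decrease the sum of the Betti numbers, and to do this one infinitesimal at a time by induction on $t$. First I would set up the one-step statement: for a single polynomial $A = A_t$ (or more precisely, inducting from $\mathcal A_{>i}$ down to $\emptyset$), and an $\mathcal A$-closed formula $\Phi$, the set $\RR(\Phi)$ is covered by the closed sets obtained by intersecting $\RR(\Phi)$ with the five closed ``slabs'' $\{A = 0\}$, $\{0 \le A \le \delta\}$, $\{-\delta \le A \le 0\}$, $\{A \ge \delta\}$, $\{A \le -\delta\}$ after pushing to $\R\la\delta\ra$. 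Since $\Phi$ is $\mathcal A$-closed (no negations, atoms $A_j = 0$, $A_j \ge 0$, $A_j \le 0$), each of these pieces is again a closed semi-algebraic set, and one checks that $\RR_1(\Phi)$ is homotopy equivalent — in fact, each piece is a deformation retract onto the corresponding ``$\delta \to 0$'' limit — to $\RR(\Phi)$, so the Betti numbers are unchanged; the point of the replacement is only to make all the pieces and their intersections \emph{basic} closed (and bounded, after the earlier reductions) so that one may iterate.

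The key step is then a Mayer--Vietoris estimate: applying inequality \eqref{eqn:MV1} of Proposition \ref{pro:MV} to the cover of $\RR_1(\Phi)$ by these (at most) five closed sets, one gets that $b(\RR_1(\Phi))$ — which equals $b(\Phi)$ — is bounded by a sum of Betti numbers of intersections of these slabs with $\RR(\Phi)$. Each such intersection is itself the realization of a formula of the form ``$\Phi$ conjoined with one element of $\Sigma_t$'', i.e. an $\mathcal A_{>t-1}$-closed-type formula but now with $A_t$ pinned down to a fixed sign condition relative to $\delta_t$. I would then recurse: inside each such piece the remaining polynomials $A_1, \ldots, A_{t-1}$ still appear, and I repeat the slab decomposition for $A_{t-1}$ with $\delta_{t-1}$, and so on. Unwinding the recursion, $b(\Phi)$ is bounded by the sum over all $\Psi \in \Sigma_{\le t}$ of $b(\Psi)$, which is exactly the claimed inequality. (One uses here the standard fact, via the transfer principle and the conic structure lemmas cited earlier, that extension to the Puiseux field $\R\la\delta_1,\ldots,\delta_i\ra$ preserves Betti numbers, so the left side is legitimately $b(\Phi)$ throughout.)

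The main obstacle I expect is bookkeeping rather than a deep point: one must verify carefully that at each stage the pieces remain \emph{closed} (this is where $\mathcal A$-closedness of $\Phi$ is essential — a negation would produce a locally closed, non-closed piece and Proposition \ref{pro:MV} would not apply) and that the homotopy equivalence $\RR(\Phi) \simeq \RR_1(\Phi)$ genuinely holds for the union, not merely piecewise; the cleanest way is to note that $\RR_1(\Phi)$ deformation retracts onto $\RR(\Phi)$ by a semi-algebraic retraction collapsing the thin $\delta$-slabs, using that all $\delta_i$ are infinitesimal relative to $\R$, and then invoke \cite[Lemma 16.17]{BPRbook2} (as in the proof of Proposition \ref{pro:homotopy3}) to legitimize passing to the limit. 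A secondary subtlety is that the slabs $\{A = \pm\delta\}$, $\{A \ge 2\delta\}$ etc. must be chosen (as in the definition of $\Sigma_i$) so that consecutive ones overlap in codimension-one slabs and the nerve of the cover is controlled, ensuring the Mayer--Vietoris sum really ranges over $\Sigma_{\le t}$ and nothing more; but once the $\Sigma_i$ are fixed as in the statement this is a direct check.
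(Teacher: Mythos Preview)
The paper's proof is a one-line citation to \cite[Proposition 7.39]{BPRbook2}, so your sketch is effectively a reconstruction of that argument. Your high-level strategy --- induction on the number of polynomials, peeling off one $A_i$ at a time via a closed cover and Mayer--Vietoris, with infinitesimal perturbations to keep all pieces closed --- is indeed the approach taken there, and it is the right one.

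There is, however, a genuine imprecision in your one-step lemma that you should not wave away as a ``direct check''. The five conditions in $\Sigma_i$ as defined in the paper, namely $A_i = 0$, $A_i = \pm\delta_i$, $A_i \ge 2\delta_i$, $A_i \le -2\delta_i$, cut out pairwise \emph{disjoint} subsets that do \emph{not} cover (they miss the open bands $0 < |A_i| < \delta_i$ and $\delta_i < |A_i| < 2\delta_i$). So they cannot themselves serve as the closed cover to which you apply Proposition~\ref{pro:MV}. Your own five slabs $\{A=0\}$, $\{0 \le A \le \delta\}$, $\{-\delta \le A \le 0\}$, $\{A \ge \delta\}$, $\{A \le -\delta\}$ do cover, but the Mayer--Vietoris sum over them produces terms like $b(\RR_1(\Phi) \cap \{0 \le A_i \le \delta_i\})$ that are not of the form $b(\Psi)$ for any $\Psi \in \Sigma_1$. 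The missing ingredient is a separate argument, for each such ``thick'' piece, that it has the same Betti numbers as one of the $\Sigma_i$-pieces --- e.g.\ that $\RR_1(\Phi) \cap \{0 \le A_i \le 2\delta_i\}$ retracts onto $\RR_1(\Phi) \cap \{A_i = 0\}$, and similarly for the other bands. These retractions are precisely where the $\mathcal A$-closedness of $\Phi$ and the infinitesimality of $\delta_i$ do their real work, and they require proof (in \cite{BPRbook2} this is the content of the lemmas immediately preceding Proposition~7.39). Once you supply them, your induction goes through as outlined.
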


\begin{proof}
See \cite[Proposition 7.39]{BPRbook2}.
\end{proof}

\begin{proof}[Proof of Theorem \ref{the:main}]
First note that we can assume (if necessary by adding  to ${\mathcal Q}$
an extra quadratic inequality)
that the set $S$ is bounded.

Denoting ${\mathcal P}=\{P_1,\dots,P_s\}$, 
define ${\mathcal B} = \{B_1,\ldots,B_{s+m} \}$, where
\[
B_i =
\begin{cases}
Q_i,& 1 \leq i \leq m,  \\
P_{i-m},& m+1 \leq i \leq m+s.
\end{cases}
\]
It follows from Proposition \ref{7:pro:closed} that in order to bound
$b(S)$, it suffices to bound $b(T)$,  where $T$ is defined by
$$
\displaylines{
\bigwedge_{i=1}^{s+m} 
B_i^2(B_i^2 - \delta_i^2)^2 (B_i^2 - 4\delta_i^2)\geq 0.
}
$$

We now introduce $m$ new variables, $Z_1,\ldots,Z_m$ and let
\[
{\mathcal A} = \{A_1,\ldots,A_{s+m}\} 
\subset \R[Y_1,\ldots,Y_\ell,X_1,\ldots,X_k,Z_1,\ldots,Z_m]
\]
be defined by
\[
A_i =
\begin{cases} Z_i,& 1 \leq i \leq m,  \\
   P_{i-m},& m+1 \leq i \leq m+s.
\end{cases}
\]
Consider the semi-algebraic set $T' \subset \R^{m+k+l}$ defined by
$$
\displaylines{
\bigwedge_{i=1}^{s+m} 
A_i^2(A_i^2 - \delta_i^2)^2 (A_i^2 - 4\delta_i^2)\geq 0
\wedge 
\bigwedge_{i=1}^{m} (Z_i - Q_i = 0).
}
$$
Clearly, $T$ is homeomorphic to $T'$. Notice that the number of polynomials
in the definition of $T'$, which depend only on 
$X$
and $Z$ is $s+m$, and
the degrees of these polynomials are bounded by  
$6d$. 
The number of polynomials
depending on $X,Y$ and $Z$ is $m$ and these are of degree at most $2$
in $Y$ and at most $d$ in the remaining variables. Thus, we are in a position
to apply Theorem \ref{the:inhomogeneous} to obtain that
\[
b(S) \leq b(T')   \leq  \ell^2 (O(s+\ell+m)\ell d)^{k+2m}. 
\]
This proves the theorem.
\end{proof}

\begin{proof}[Proof of Corollary \ref{cor:main}]
Introduce $k$ new variables, $Z_1,\ldots,Z_k$, and let
$\tilde{Q}_i = Z_i - Q_i$ for $1 \leq i \leq k$.

Define the semi-algebraic set $\tilde{S} \subset \R^{\ell+k}$ by
$$
\displaylines{
\tilde{S} = \{ (y,x) \;\mid\; \bigwedge_{i=1}^k \tilde{Q}_i(y,x) = 0 \wedge \Phi(x)\}.
}
$$
It is clear that $\tilde{S}$ is semi-algebraically homeomorphic to $S$.
Applying Theorem \ref{the:main} to $\tilde{S}$, we obtain the desired bound. 
\end{proof}

\section{Algorithm for Computing the Euler-Poincar\'e characteristic}
\label{sec:algo_ep}
We first need a few preliminary definitions and results.

\subsection{Some Algorithmic and Mathematical Preliminaries}
Recall that for 
a closed and bounded semi-algebraic set $S \subset \R^k$, the 
Euler-Poincar\'e characteristic of $S$, denoted by $\chi(S)$, 
is defined by
\[
\chi(S) = \sum_{i=0}^k 
(-1)^i~ b_i(S).
\]
Moreover, we have the following additivity property which is classical.
\begin{proposition}
\label{6:pro:additivityforordinary}
Let $X_1$ and $X_2$ be closed and bounded semi-algebraic
sets.  Then
$$
\chi(X_1\cap X_2)=\chi(X_1)+\chi(X_2) - \chi(X_1 \cup X_2).
$$
\end{proposition}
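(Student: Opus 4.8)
The plan is to derive the identity from the Mayer--Vietoris long exact sequence in homology, which is available in the semi-algebraic category over an arbitrary real closed field for a pair of closed semi-algebraic sets (see \cite{BPRbook2}). Write $A = X_1$, $B = X_2$, so that $A \cap B$ and $A \cup B$ are again closed and bounded semi-algebraic sets, and all homology groups involved are finite-dimensional $\K$-vector spaces (finiteness of the Betti numbers of a bounded semi-algebraic set is standard). The Mayer--Vietoris sequence reads
$$
\cdots \to \HH_i(A \cap B) \to \HH_i(A) \oplus \HH_i(B) \to \HH_i(A \cup B) \to \HH_{i-1}(A \cap B) \to \cdots,
$$
and it is exact and terminates in both directions since the sets are bounded, hence have homology concentrated in finitely many degrees.

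Next I would invoke the elementary linear-algebra fact that for any long exact sequence of finite-dimensional vector spaces $\cdots \to V_n \to V_{n-1} \to \cdots \to V_0 \to 0$ one has $\sum_n (-1)^n \dim V_n = 0$ (proved by splitting the sequence into short exact pieces along the images of the maps and applying rank--nullity). Applying this to the Mayer--Vietoris sequence and grouping its terms into blocks of three indexed by $i$ — namely $\HH_i(A\cap B)$, $\HH_i(A)\oplus \HH_i(B)$, $\HH_i(A\cup B)$ — one checks that within each block the three consecutive terms carry signs $+(-1)^i$, $-(-1)^i$, $+(-1)^i$, so that, using $\sum_i (-1)^i \dim(\HH_i(A)\oplus\HH_i(B)) = \chi(A) + \chi(B)$, the vanishing of the alternating sum becomes
$$
\chi(A\cap B) - \big(\chi(A) + \chi(B)\big) + \chi(A\cup B) = 0.
$$
Rearranging yields exactly $\chi(X_1\cap X_2) = \chi(X_1) + \chi(X_2) - \chi(X_1\cup X_2)$.

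There is no serious obstacle here beyond keeping track of signs; the only ingredients that require a citation rather than a computation are the existence of the semi-algebraic Mayer--Vietoris sequence over a general real closed field and the finiteness of the Betti numbers, both standard (see \cite{BPRbook2}). An equally short alternative avoids exact sequences entirely: choose a semi-algebraic triangulation of $X_1 \cup X_2$ compatible with $X_1$ and $X_2$ (hence also with $X_1 \cap X_2$); then the numbers of simplices of each dimension obey ordinary inclusion--exclusion, and since $\chi$ of a finite simplicial complex is the alternating sum of the numbers of its simplices, the additivity formula is immediate. I would present the Mayer--Vietoris argument as the main proof and mention the triangulation route as a remark.
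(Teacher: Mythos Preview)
Your argument is correct. The paper itself does not prove this proposition at all; it is introduced with the phrase ``the following additivity property which is classical'' and left without proof. Your Mayer--Vietoris derivation (and the triangulation alternative you mention) is the standard way to justify such a statement, and supplies exactly the kind of proof the paper omits. There is nothing to compare against on the paper's side.
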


Recall 
also that for 
a locally closed semi-algebraic set $S$, the Borel-Moore
Euler-Poincar\'e characteristic of $S$, denoted by $\chi^{BM}(S)$, 
is defined by
\[
\chi^{BM}(S) = \sum_{i=0}^k 
(-1)^i~
b_i^{BM}(S),
\]
where $b_i^{BM}(S)$ denotes the dimension of the $i$-th Borel-Moore
homology group $\HH_i^{BM}(S,\Z/2\Z)$ of $S$.
Note that $\chi^{BM}(S) = \chi(S)$ for $S$ closed and bounded.
 
Note that $\chi^{BM}(S)$ has the following classically known
(see e.g.  \cite{BPRbook2} for a proof) additivity property.

\begin{proposition}
\label{6:pro:additivity}
Let $X_1$ and $X_2$ be locally closed semi-algebraic
sets such that  $X_1\cap X_2=\emptyset.$  Then
$$\chi^{BM}(X_1\cup X_2)=\chi^{BM}(X_1)+\chi^{BM}(X_2),$$
provided that $X_1\cup X_2$ is locally closed  as well. 
\end{proposition}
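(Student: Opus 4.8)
The plan is to derive the statement from the single special case in which one of the two sets is closed in the union and the other is its open complement; call this the \emph{closed--open case}. I would first isolate that case as a lemma: if $Y$ is a locally closed semi-algebraic set, $Z\subseteq Y$ is closed in $Y$, and $U=Y\setminus Z$, then $Z$ and $U$ are again locally closed and $\chi^{BM}(Y)=\chi^{BM}(Z)+\chi^{BM}(U)$. To prove this lemma I would invoke the long exact sequence of Borel--Moore homology of the pair $(Y,Z)$,
\[
\cdots\to \HH_i^{BM}(Z,\Z/2\Z)\to \HH_i^{BM}(Y,\Z/2\Z)\to \HH_i^{BM}(U,\Z/2\Z)\xrightarrow{\ \partial\ }\HH_{i-1}^{BM}(Z,\Z/2\Z)\to\cdots,
\]
which holds in the semi-algebraic category over an arbitrary real closed field (see \cite{BPRbook2}). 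All the groups here are finite-dimensional $\Z/2\Z$-vector spaces and the sequence vanishes outside a finite range, so the alternating sum of their dimensions over the whole sequence is zero; regrouping that sum three terms at a time gives exactly $\chi^{BM}(Z)-\chi^{BM}(Y)+\chi^{BM}(U)=0$, which is the claim.

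For the general statement, I would set $X=X_1\cup X_2$ (locally closed by hypothesis) and let $Z$ be the closure of $X_1$ in $X$ --- equivalently, the intersection of $X$ with the ambient closure of $X_1$ --- so that $Z$ is semi-algebraic and closed in $X$. Since $X_1$ is locally closed it is open in its ambient closure, hence open in $Z$, and therefore $Z\cap X_2=Z\setminus X_1$ is closed in $Z$; and since $X_1\subseteq Z$ we have $X\setminus Z\subseteq X_2$, whence $X\setminus Z=X_2\setminus Z$. All pieces occurring below are thus locally closed. I would then apply the closed--open lemma to the three decompositions $X=Z\sqcup(X\setminus Z)$, $Z=(Z\cap X_2)\sqcup X_1$, and $X_2=(Z\cap X_2)\sqcup(X_2\setminus Z)$, obtaining
\[
\chi^{BM}(X)=\chi^{BM}(Z)+\chi^{BM}(X\setminus Z),\quad \chi^{BM}(Z)=\chi^{BM}(Z\cap X_2)+\chi^{BM}(X_1),\quad \chi^{BM}(X_2)=\chi^{BM}(Z\cap X_2)+\chi^{BM}(X\setminus Z).
\]
Substituting the second identity into the first and then cancelling $\chi^{BM}(Z\cap X_2)+\chi^{BM}(X\setminus Z)$ via the third yields $\chi^{BM}(X)=\chi^{BM}(X_1)+\chi^{BM}(X_2)$.

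The only genuinely nontrivial input is the Borel--Moore long exact sequence of a locally closed pair in the semi-algebraic setting; since the paper already refers to \cite{BPRbook2} for Borel--Moore homology and its additivity, this can simply be cited there, and the remainder of the argument is elementary point-set topology together with the additivity of Euler characteristic along an exact sequence of finite-dimensional vector spaces. As an alternative route, which I would mention briefly, one may take a semi-algebraic triangulation of the ambient space simultaneously compatible with $X_1$ and $X_2$: using that $\chi^{BM}$ of an open $d$-simplex equals $(-1)^d$ (itself a consequence of the closed--open lemma) and that $\chi^{BM}$ of a finite disjoint union of open simplices is the sum of the corresponding $(-1)^{\dim}$, additivity follows at once from partitioning the open simplices of $X$ into those lying in $X_1$ and those lying in $X_2$.
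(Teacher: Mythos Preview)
Your argument is correct. The closed--open lemma via the Borel--Moore long exact sequence is the standard mechanism, and your reduction of the general disjoint case to three instances of that lemma via the relative closure $Z=\overline{X_1}\cap X$ is clean and accurate; the verification that every auxiliary piece ($Z$, $X\setminus Z$, $Z\cap X_2$, $X_2\setminus Z$) is locally closed is exactly what is needed to make the long exact sequence apply at each step.

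As to comparison with the paper: the paper does not give a proof of this proposition at all. It simply records the statement as ``classically known'' and refers the reader to \cite{BPRbook2}. So your write-up supplies strictly more than the paper does here. If your aim is merely to match the paper, a one-line citation suffices; if you want a self-contained account, what you have is fine, and the alternative triangulation argument you sketch at the end is in fact the route taken in \cite{BPRbook2}.
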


Let $Z\subset \R^k$ and $Q\in \R[X_1,\ldots,X_k]$. We define
\begin{align*}
\RR(Q=0,Z)&= \{x\in Z\;\mid\;
Q(x)=0 \},\\
\RR(Q>0,Z)&= \{x\in Z\;\mid\;
Q(x)>0 \},\\
\RR(Q<0,Z)&= \{x\in Z\;\mid\;
Q(x)<0 \}.
\end{align*}

\begin{corollary}
\label{6:cor:additivity}
Let $Z \subset \R^k$ be a locally closed semi-algebraic
set. Then
$$\chi^{BM}(Z)=\chi^{BM}(\RR(Q=0,Z))+\chi^{BM}(\RR(Q>0,Z))+\chi^{BM}(\RR(Q<0,Z)).$$
\end{corollary}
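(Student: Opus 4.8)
\textbf{Proof proposal for Corollary \ref{6:cor:additivity}.}
The plan is to reduce the statement directly to the additivity of the Borel-Moore Euler-Poincar\'e characteristic stated in Proposition \ref{6:pro:additivity}. First I would observe that the three sets $\RR(Q=0,Z)$, $\RR(Q>0,Z)$ and $\RR(Q<0,Z)$ are pairwise disjoint and their union is all of $Z$, simply because for each $x \in Z$ exactly one of $Q(x)=0$, $Q(x)>0$, $Q(x)<0$ holds. So set-theoretically there is nothing to prove; the only real content is that Proposition \ref{6:pro:additivity} may be applied, which requires each of the pieces, and the relevant partial unions, to be locally closed semi-algebraic sets.

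The key steps, in order, would be the following. (1) Note that $Z$ itself is locally closed by hypothesis, and that $Q^{-1}(0)$ is closed in $\R^k$ while $\{Q>0\}$ and $\{Q<0\}$ are open in $\R^k$; intersecting a locally closed set with a closed set or with an open set again yields a locally closed set, so each of $\RR(Q=0,Z)$, $\RR(Q>0,Z)$, $\RR(Q<0,Z)$ is locally closed. (2) Apply Proposition \ref{6:pro:additivity} to the disjoint pair $X_1 = \RR(Q>0,Z)$ and $X_2 = \RR(Q<0,Z)$, whose union $\RR(Q\neq 0,Z) = Z \cap \{Q \neq 0\}$ is locally closed (again, intersection of the locally closed set $Z$ with the open set $\{Q\neq 0\}$), to get
\[
\chi^{BM}(\RR(Q\neq 0,Z)) = \chi^{BM}(\RR(Q>0,Z)) + \chi^{BM}(\RR(Q<0,Z)).
\]
(3) Apply Proposition \ref{6:pro:additivity} a second time to the disjoint pair $X_1 = \RR(Q=0,Z)$ and $X_2 = \RR(Q\neq 0,Z)$, whose union is $Z$, which is locally closed by assumption, to obtain
\[
\chi^{BM}(Z) = \chi^{BM}(\RR(Q=0,Z)) + \chi^{BM}(\RR(Q\neq 0,Z)).
\]
Combining the two displayed identities gives the claimed three-term formula.

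I do not anticipate a serious obstacle here: this is a bookkeeping corollary whose entire force is carried by Proposition \ref{6:pro:additivity}. The one point that needs a moment's care — and the closest thing to a ``hard part'' — is verifying the local-closedness hypotheses at each application of the proposition, in particular that $\RR(Q\neq 0,Z)$ is locally closed so that the first application is legitimate and so that it is a valid first argument in the second application. This follows from the elementary topological fact that the class of locally closed subsets of $\R^k$ is stable under intersection with arbitrary open sets and with arbitrary closed sets, together with the semi-algebraicity of all sets in sight, which is immediate since they are defined by adjoining polynomial (in)equalities to the semi-algebraic set $Z$.
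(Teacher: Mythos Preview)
Your proposal is correct and is exactly the argument the paper has in mind: the corollary is stated immediately after Proposition~\ref{6:pro:additivity} with no separate proof, as it follows by two applications of that proposition together with the routine check that the relevant pieces and partial unions are locally closed. Your verification of the local-closedness hypotheses is the only point requiring care, and you handle it correctly.
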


\begin{notation}
\label{not:ep}
Let $Z \subset \R^k$ be a locally closed semi-algebraic set and
let ${\mathcal A}$
be a finite subset of $\R[X_1,\ldots,X_k]$.

The  realization of the  sign condition
${\rho}\in \{0,1,- 1\}^{\mathcal A}$ 
on  ${Z}$  is
$$
\RR(\rho,Z)= \{x\in Z\;\mid\;
 \bigwedge_{A \in {\mathcal A}} \s({A}(x))=\rho(A) \},
$$
and its Borel-Moore
Euler-Poincar\'e characteristic is denoted $\chi^{BM}(\rho,Z).$

We denote  by ${\rm Sign}({\mathcal A},Z)$   the list of
$\rho \in  \{0,1,- 1\}^{\mathcal A}$
such that $\RR(\rho,Z)$ is non-empty.
We denote by $\chi^{BM}({\mathcal A},Z)$  the  list of
Euler-Poincar\'e characteristics \\
$\chi^{BM}(\rho,Z)=\chi^{BM}(\RR(\rho,Z))$ for
$\rho \in {\rm Sign}({\mathcal A},Z)$.

Finally, 
given two finite families of polynomials, ${\mathcal A} \subset {\mathcal A}'$,
and $\rho \in \{0,1,-1\}^{{\mathcal A}}, \rho' \in \{0,1,-1\}^{{\mathcal A}'}$,
we define $\rho \prec \rho'$ 
by:
for all $P \in {\mathcal A},$ $\rho(P) = \rho'(P)$.
\end{notation}

We will use the following algorithm for computing the list
$\chi^{BM}({\mathcal A},Z)$ described in \cite{BPRbook2}. 
We  
recall
here the
input, output and complexity of the algorithm.

\begin{algorithm}[Euler-Poincar\'e Characteristic of Sign Conditions]
\label{13:alg:eulerdet}
\item[]
\item[{\sc Input}] 
A finite list ${\mathcal A}= \{A_1,\ldots,A_t\}$ of polynomials  in
$\R[X_1,\ldots,X_k]$.
\item[{\sc Output}] 
The list $\chi^{BM}({\mathcal A})$.
\end{algorithm}

\medskip
\noindent\textsc{Complexity:}
Let $d$ be a bound on the degrees of the polynomials
in ${\mathcal A}$,
and $t=\#({\mathcal A})$.
The number of arithmetic operations is bounded by 
$$t^{k+1}O(d)^k+ t^{k}((k\log_2(s)+k\log_2(d))d)^{O(k)}.$$
The algorithm also involves the inversion  of matrices of
size $t^{k}O(d)^k$ with integer coefficients.

\subsection
{Algorithms for the Euler-Poincar\'e characteristic}

We first deal with the special case of 
polynomials which are homogeneous and of degree two
in the variables $Y_0,\ldots,Y_\ell$,
and in this case we describe algorithms 
(Algorithms \ref{alg:union} and \ref{alg:homogeneous} below)
for computing the Euler-Poincar\'e
characteristic of the sets $A^h$ and $W^h$ respectively.
We then use Algorithm \ref{alg:homogeneous} to derive 
algorithms for computing the Euler-Poincar\'e
characteristic in the 
general case 
(Algorithms \ref{alg:basic} and  \ref{alg:general} below).

\subsubsection{Homogeneous quadratic polynomials}

\begin{algorithm} [Euler-Poincar\'e characteristic, homogeneous union case]
\label{alg:union}
\item[]
\item[{\sc Input}]
\item[]
\begin{itemize}
\item
A family of polynomials,
$
{\mathcal Q}^h  \subset  \R[Y_0,\ldots,Y_\ell,X_1,\ldots,X_k],
$
with $\deg_{Y}(Q) \leq 2, 
\deg_{X}(Q) \leq d, 
Q \in {\mathcal Q}^h, \#({\mathcal Q}^h)=m$, homogeneous with respect to 
$Y$,
\item
another family, 
${\mathcal P}
\subset \R[X_1,\ldots,X_k]$ with
$ \deg_{X}(P) \leq d, P \in {\mathcal P},  \#({\mathcal P})=s$,
\item
a formula $\Phi$ defining a bounded ${\mathcal P}$-closed semi-algebraic set $V$.
\end{itemize}

\item[{\sc Output}]
the Euler-Poincar\'e characteristic $\chi(A^h)$, 
where $A^h$ is
the semi-algebraic set defined by
$$
\displaylines{
A^h = \bigcup_{Q \in {\mathcal Q}^h }
\{ (y,x) \;\mid\; |y|=1\; \wedge\; Q(y,x) \leq 0\; \wedge \; \Phi(x)\}.
}
$$
\item[{\sc Procedure}]
\item[] 
\item[Step 1.]
Let $Z= (Z_1,\ldots,Z_m)$ be variables and let $M$ be the symmetric
matrix with entries in $\R[Z_1,\ldots,Z_m,X_1,\ldots,X_k]$ associated
to the quadratic form $\la Z , {\mathcal Q}^h\ra$. 
Obtain $C_i \in \R[Z_1,\ldots,Z_m,X_1,\ldots,X_k]$ 
by computing the following determinant.
\[
\det( T \cdot {\rm Id}_{\ell+1}-M)= T^{\ell+1} + 
C_{\ell}T^{\ell} + \cdots+ C_0.
\]
\item[Step 2.]
Compute $\chi^{BM}({\mathcal C},F)$ as follows.
Call Algorithm \ref{13:alg:eulerdet} with input 
${\mathcal C}' = {\mathcal C} 
\cup {\mathcal P}$.
Compute from the output the list 
$
\chi^{BM}({\mathcal C}, F),
$
using the additivity property 
of the Borel-Moore Euler-Poincar\'e characteristic (Proposition 
\ref{6:pro:additivity}).
For each $\rho \in \{0,+1,-1\}^{{\mathcal C}}$, such that there exists
$\rho' \in {\rm Sign}({\mathcal C'}, F)$ with 
$\rho \prec \rho'$ (see Notation \ref{not:ep}) 
and $\rho'(Z_j) \in \{0,-1\}$ for $1 \leq j \leq m,$
compute
$$
\chi^{BM}(\rho,F) = 
\sum_{\substack{\rho',\rho \prec \rho',\\
\rho'(Z_j) \in \{0,-1\},1 \leq j \leq s}}
\chi^{BM} (\rho',F).
$$

\item[Step 3.]
Output 
\[
\chi(A^h) = 
\sum_{\rho \in {\rm Sign}({\mathcal C},F)} \chi^{BM}(\RR(\rho,
F))
\cdot (1 + (-1)^{(k - n(\rho))}),
\]
where $n(\rho)$ denotes the number of sign variations in the sequence,
\[
\rho(C_0),\ldots,(-1)^i \rho(C_i),\ldots, (-1)^\ell \rho(C_\ell),+1.
\]
\end{algorithm}

\medskip
\noindent\textsc{Proof of Correctness:}
It follows from Lemma \ref{lem:sphere} that 
for 
any $\rho \in {\rm Sign}({\mathcal C},F)$
\[
\chi^{BM}(\varphi_1^{-1}(\RR(\rho))) = 
\chi^{BM}(\RR(\rho))\cdot(1 + (-1)^{(k - n(\rho))}).
\]
Also, by virtue of Proposition \ref{pro:homotopy2} we have that 
$$
\chi^{BM}(B) = \chi(A^h),
\qquad\text{where}\quad 
B = \bigcup_{\rho \in {\rm Sign}({\mathcal C},F)} \varphi^{-1}(\RR(\rho)).
$$
The correctness of the algorithm is now a consequence of 
the additivity property of the Borel-Moore Euler Poincar\'e characteristic
(Proposition \ref{6:pro:additivity})
and the correctness of Algorithm \ref{13:alg:eulerdet}.
\qed

\medskip
\noindent\textsc{Complexity Analysis:}
The complexity of the algorithm is  $(\ell s m d)^{O(m+k)}$ 
using the complexity of 
Algorithm \ref{13:alg:eulerdet}.
\qed

We are now in a position to describe the algorithm for computing the
Euler-Poincar\'e characteristic in the homogeneous intersection  case.

\begin{algorithm} [Euler-Poincar\'e characteristic, homogeneous intersection case]
\label{alg:homogeneous} 
\item[]
\item[{\sc Input}]
\item[]
\begin{itemize}
\item
A family of polynomials,
$
{\mathcal Q}^h = \{Q_1^h,\ldots, Q_m^h\} \subset  \R[Y_0,\ldots,Y_\ell,X_1,\ldots,X_k],
$
with $\deg_{Y}(Q) \leq 2, 
\deg_{X}(Q) \leq d, 
Q \in {\mathcal Q}^h$, homogeneous with respect to $Y$,
\item
another family, 
${\mathcal P} 
\subset \R[X_1,\ldots,X_k]$ with
$\deg_{X}(P) \leq d, P \in {\mathcal P}, \#({\mathcal P})=s$,

\item
a formula $\Phi$ defining a bounded 
${\mathcal P}$-closed semi-algebraic set $V$.
\end{itemize}

\item[{\sc Output}]
\item[] 
the Euler-Poincar\'e characteristic $\chi(W^h)$,
where $W^h$ is the semi-algebraic set defined by
$$
\displaylines{
W^h = \bigcap_{Q\in {\mathcal Q}^h}
\{ (y,x) \;\mid\; |y|=1\; \wedge\; Q(y,x) \leq 0\; \wedge \; \Phi(x)\}.
}
$$

\item[{\sc Procedure}]
\item[] 
\item[Step 1.]
For each subset $J \subset [m]$ do the following.

\item
Compute 
$\chi(A^J)$
using Algorithm \ref{alg:union}, where
$$
\displaylines{
A^{J} = \bigcup_{Q \in J}
\{ (y,x) \;\mid\; |y|=1\; \wedge\; Q(y,x) \leq 0\; \wedge \; \Phi(x)\}.
}
$$

\item [Step 2.]
Output 
\begin{equation}
\label{eqn:W^h}
\chi(W^h) = \sum_{J \subset {\mathcal Q}} (-1)^{\#(J)+1} \chi(A^{J}).
\end{equation}

\end{algorithm}

\medskip
\noindent\textsc{Proof of Correctness:}
First note that the equality \ref{eqn:W^h} can be easily deduced from
Proposition \ref{6:pro:additivityforordinary} by induction.
The correctness of the algorithm is now 
a consequence of the correctness of  Algorithm \ref{alg:union}.
\qed

\medskip
\noindent\textsc{Complexity Analysis:}
There are $2^{m}$ calls to  Algorithm \ref{alg:union}. Using the
complexity analysis of  Algorithm \ref{alg:union}, the complexity of
the algorithm is bounded by $(\ell s m d)^{O(m+k)}.$
\qed
\subsubsection{The Case of 
Intersections}
\begin{algorithm} [Euler-Poincar\'e Characteristic, 
Intersection Case]
\label{alg:basic}
\item[{\sc Input}]
\item[]
\begin{itemize}
\item A family of polynomials, ${\mathcal Q} \subset \R[Y_1,\ldots,Y_\ell,X_1,\ldots,X_k],$
with
$
\deg_{Y}(Q) \leq 2, 
\deg_{X}(Q) \leq d, 
Q \in {\mathcal Q}, \#({\mathcal Q})=m
$
\item another family of polynomials,
$
{\mathcal P} 
\subset \R[X_1,\ldots,X_k]
$
with 
$\deg_{X}(Q) \leq d,  P \in {\mathcal P},  \#({\mathcal P})=s$,
\item
a ${\mathcal P}$-closed formula 
$\Phi$ defining a  ${\mathcal P}$-closed 
semi-algebraic set $V \subset \R^k$.
\end{itemize}
\item [{\sc Output}]
the Euler-Poincar\'e characteristic $\chi(W)$,
where $W$ is the semi-algebraic set defined by
$$
\displaylines{
W = \bigcap_{Q \in {\mathcal Q}}
\{ (y,x) \;\mid\; Q(y,x) \leq 0\; \wedge \; \Phi(x)\}.
}
$$

\item [{\sc Procedure}]
\item[]
\item[Step 1.]
Replace 
${\mathcal Q}^h$ 
by
${\mathcal Q}^h \cup \{Q_0^h \},$
with 
$Q_0=\eps^2(Y_1^2+\ldots+Y_\ell^2)-1$.
Define
$$
\displaylines{
W_\eps^h = \bigcap_{Q^h \in {\mathcal Q}^h}
\{ (y,x) \;\mid\; |y|=1\; \wedge\; Q^h(y,x) \leq 0\; \wedge \; \Phi(x)\}.
}
$$
\item [Step 2.]
Using Algorithm \ref{alg:homogeneous} compute 
$\chi(W_\eps^h)$.
\item [Step 3.]
Output $\chi(W) = \frac{1}{2}\chi(W_\eps^h)$.
\end{algorithm}

\medskip
\noindent\textsc{Proof of Correctness:}
The correctness of  Algorithm \ref{alg:basic} 
follows from \eqref{eqn:doubling}
and the correctness of Algorithm \ref{alg:homogeneous}.
\qed

\medskip
\noindent\textsc{Complexity Analysis:}
The complexity of the algorithm is clearly $(\ell s m  d)^{O(m+k)}$ arithmetic operations in $\R\la \eps \ra$
from the complexity analysis of Algorithm  \ref{alg:homogeneous}.
Moreover the maximum degree in $\eps$ is bounded by $(\ell m  d)^{O(m+k)}$.
Finally the
complexity of the algorithm is $(\ell s m  d)^{O(m+k)}$ arithmetic operations in $\R$.
\qed
\subsubsection{The case of a  ${\mathcal Q} \cup {\mathcal P}$-closed semi-algebraic set}
\label{sec:general}

Since we want to deal with a general 
${\mathcal Q} \cup {\mathcal P}$-closed semi-algebraic set, 
we shall need a property similar to 
Corollary \ref{6:cor:additivity}
in a context where all the sets considered are closed and bounded.

We need a few preliminary definitions and results.
Let  ${\mathcal Q}=\{Q_1,\ldots,Q_m\}$ 
and
\[
0<\eps_m \ll \cdots \ll \eps_1 \ll \eps_0 \ll 1
\]
be infinitesimals.
For every $j \in [m]=\{1,\ldots,m\}$,  denote 
$\R_j=\R\la \eps_{0},\ldots,\eps_j\ra$.
Let
\begin{align*}
\Psi_i^0 &= (Q_i=0), \\
\Psi_i^1 &= (Q_i\ge \eps_i), & \Psi_i^{-1} &= (Q_i\le -\eps_i), \\
\Psi_i^{2} &=  (Q_i = \eps_i), & \Psi_i^{-2} &= (Q_i =-\eps_i).
\end{align*}

The following Lemma~\ref{lem:generalizedsigns} plays a role similar to  
Corollary \ref{6:cor:additivity}.
\begin{lemma}
\label{lem:generalizedsigns}
Let $S$ be a ${\mathcal Q} \cup {\mathcal P}$-closed bounded 
semi-algebraic set. For every $j\in [m]$
\[\chi(S)=
\chi(\RR(\Psi_j^{0},S))+\chi(\RR(\Psi_j^{1},S))+\chi(\RR(\Psi_j^{-1},S))-\chi(\RR(\Psi_j^{2},S))
-\chi(\RR(\Psi_j^{-2},S))
\]
\end{lemma}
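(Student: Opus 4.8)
Fix $j\in[m]$. The plan is to work over $\R_j=\R\la\eps_0,\ldots,\eps_j\ra$, replacing $S$ by its extension $\E(S,\R_j)$; since the ordinary and Borel--Moore Euler--Poincar\'e characteristics are invariant under extension of the real closed field and $S$ is closed and bounded (so $\chi^{BM}(S)=\chi(S)$), it suffices to prove the identity with $\chi(S)$ replaced by $\chi^{BM}(S)$. I would start from Corollary \ref{6:cor:additivity} applied to the polynomial $Q_j$:
\[
\chi^{BM}(S)=\chi^{BM}(\RR(Q_j=0,S))+\chi^{BM}(\RR(Q_j>0,S))+\chi^{BM}(\RR(Q_j<0,S)).
\]
The set $\RR(Q_j=0,S)=\RR(\Psi_j^0,S)$ is closed and bounded, so its $\chi^{BM}$ already equals $\chi(\RR(\Psi_j^0,S))$; what remains is to rewrite the two ``open'' terms in terms of the closed bounded sets $\RR(\Psi_j^{\pm1},S)$ and $\RR(\Psi_j^{\pm2},S)$.

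For the term $\chi^{BM}(\RR(Q_j>0,S))$ I would argue in two steps. First, by additivity of $\chi^{BM}$ for disjoint locally closed sets with locally closed union (Proposition \ref{6:pro:additivity}),
\[
\chi^{BM}(\RR(Q_j>0,S))=\chi^{BM}(\{x\in S\mid 0<Q_j(x)\le\eps_j\})+\chi^{BM}(\{x\in S\mid Q_j(x)>\eps_j\}).
\]
The crucial claim is that the first summand vanishes: by semi-algebraic (Hardt) triviality of $Q_j\colon S\to\R_j$ (see \cite{BPRbook2}) over an interval $(0,\delta)$ containing no critical value of $Q_j$ over $\R$ — and $\eps_j$ lies in this interval since it is infinitesimal over $\R$ — the set $\{0<Q_j\le\eps_j\}\cap S$ is semi-algebraically homeomorphic to $F\times(0,1]$, where $F$ is the extension of a fibre $\{Q_j=c\}\cap S$ and hence closed and bounded; and $\chi^{BM}(F\times(0,1])=\chi^{BM}(F)\cdot\chi^{BM}((0,1])=0$ since $\chi^{BM}((0,1])=0$ (from $(0,1]\cong[0,1)$ and $\chi^{BM}([0,1])=\chi^{BM}(\{1\})=1$, or by a reparametrization and a direct application of Proposition \ref{6:pro:additivity}). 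Second, $\RR(\Psi_j^1,S)=\{Q_j\ge\eps_j\}\cap S$ is closed and bounded and is the disjoint union of the locally closed sets $\{Q_j=\eps_j\}\cap S=\RR(\Psi_j^2,S)$ and $\{Q_j>\eps_j\}\cap S$, so Proposition \ref{6:pro:additivity} gives $\chi^{BM}(\{Q_j>\eps_j\}\cap S)=\chi(\RR(\Psi_j^1,S))-\chi(\RR(\Psi_j^2,S))$. Combining the two steps, $\chi^{BM}(\RR(Q_j>0,S))=\chi(\RR(\Psi_j^1,S))-\chi(\RR(\Psi_j^2,S))$, and the same reasoning applied to $-Q_j$ yields $\chi^{BM}(\RR(Q_j<0,S))=\chi(\RR(\Psi_j^{-1},S))-\chi(\RR(\Psi_j^{-2},S))$. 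Substituting the three expressions into the displayed formula produces exactly the asserted identity.

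The additivity bookkeeping and the sign tracking are routine; the one genuinely delicate point I expect is the vanishing $\chi^{BM}(\{x\in S\mid 0<Q_j(x)\le\eps_j\})=0$, i.e.\ understanding the topology of the infinitesimally thin collar of $\{Q_j=0\}\cap S$ inside $\{Q_j>0\}\cap S$. This needs Hardt's triviality theorem, the fact that $\eps_j$ is smaller than every critical value of $Q_j$ over $\R$, and the observation that $\chi^{BM}$ is \emph{not} a homotopy invariant, so one genuinely needs the half-open factor $(0,1]$ rather than merely that the collar retracts onto $\{Q_j=\eps_j\}\cap S$. One must also check at each stage that the five sets on the right-hand side are closed and bounded — they are, being intersections of $S$ with closed sets — so that their Borel--Moore Euler characteristics agree with the ordinary ones in the statement.
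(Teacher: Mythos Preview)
Your argument is correct, but it takes a different route from the paper's. The paper works entirely with the ordinary Euler--Poincar\'e characteristic of closed bounded sets: it covers $\E(S,\R_j)$ by the three closed pieces $\{Q_j\ge\eps_j\}\cap S$, $\{-\eps_j\le Q_j\le\eps_j\}\cap S$, and $\{Q_j\le-\eps_j\}\cap S$, applies inclusion--exclusion (Proposition \ref{6:pro:additivityforordinary}) noting that the pairwise intersections are $\RR(\Psi_j^{\pm 2},S)$ and the triple intersection is empty, and then invokes the single topological fact that $\RR(\Psi_j^0,S)$ is a deformation retract of the slab $\{-\eps_j\le Q_j\le\eps_j\}\cap S$ (which is where the infinitesimality of $\eps_j$ enters). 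By contrast, you decompose $S$ into the open/closed pieces $\{Q_j=0\}$, $\{Q_j>0\}$, $\{Q_j<0\}$ via Corollary \ref{6:cor:additivity}, and your key step is the vanishing $\chi^{BM}(\{0<Q_j\le\eps_j\}\cap S)=0$, obtained from Hardt triviality and the computation $\chi^{BM}((0,1])=0$. Both approaches ultimately exploit the same collar structure near $\{Q_j=0\}$, but the paper's version is shorter and avoids Borel--Moore altogether, while yours makes the role of the half-open collar more explicit at the cost of tracking local closedness and invoking the product formula for $\chi^{BM}$.
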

\begin{proof} 
The claims follow from the additivity property
of the Euler-Poincar\'e characteristic, 
and the fact that
\[
\chi(\RR(\Psi_j^{0},S))=\chi(\{(x,y)\in S \mid -\eps_j\le 
Q_j(x,y)\le \eps_j\}),\] since
$\RR(\Psi_j^{0},S)$ is a deformation retract of $\{(x,y)\in S \mid -\eps_j\le Q_j(x,y)\le \eps_j\}$.
\end{proof}

We define $\Sigma_m= \{-2,-1,0,1,2\}^{[m]}$.
Given $\rho \in \Sigma_m$ we define
\[
\RR(\rho,S)=\{(x,y)\in \E(S,\R_m)\mid 
\bigwedge_{i=1}^m\Psi_i^{\rho(i)}(x,y)
\}.
\]

For any  $\rho \in \Sigma_m$ and $\sigma$ a weak sign condition on 
${\mathcal Q} \cup {\mathcal P}$, we say that $\rho \prec \sigma$, if
for each $i \in [m]$, $\s(\rho(i)) \in \sigma(Q_i)$ and $\R(\sigma) \subset S$.

Notice that an alternative description of $\RR(\rho,S)$ is given by
\begin{equation}
\label{eqn:defofR(rho,S)}
\RR(\rho,S)=\left\{
(x,y)\in \R_m^{\ell+k}\mid 
\bigwedge_{i=1}^m\Psi_i^{\rho(i)}(x,y)
\wedge
\left(\bigvee_{\rho \prec\sigma}
\bigwedge_{P \in {\mathcal P}} (\s(P(x) \in \sigma(P))
\right)
\right\}.
\end{equation}

\begin{algorithm} [Euler-Poincar\'e, the general case]
\label{alg:general}
\item[{\sc Input}]
\item[]
\begin{itemize}
\item A family of polynomials, ${\mathcal Q} = \{Q_1,\ldots,Q_m\} \subset \R[Y_1,\ldots,Y_\ell,X_1,\ldots,X_k]$,
with
$
\deg_{Y}(Q) \leq 2, \deg_{X}(Q) \leq d,
$
\item
another family of polynomials,${\mathcal P} 
\subset \R[X_1,\ldots,X_k]$
with 
$\deg_{X}(P) \leq d, P \in {\mathcal P}, \#({\mathcal P})=s$,
\item a ${\mathcal Q} \cup {\mathcal P}$-closed 
formula defining a ${\mathcal Q} \cup {\mathcal P}$-closed 
semi-algebraic set $S$.

\end{itemize}
\item [{\sc Output}]
the Euler-Poincar\'e characteristic $\chi(S)$.

\item [{\sc Procedure}]
\item[]
\item[Step 1.]
Define  $Q_0=\eps_0^2(Y_1^2+\ldots+Y_\ell^2)-1$, 
$P_0=\eps_0^2(X_1^2+\ldots+X_k^2)-1$.
Replace ${\mathcal P}$ by ${\mathcal P}\cup \{P_0\}$ and $S$ by $\RR(S,\R \la \eps \ra)\cap (\RR(Q_0 \le 0) \times \RR(P_0 \le 0))$.

\item[Step 2.]

For every generalized sign condition $\rho \in \Sigma_m$ compute 
$\chi(\RR(\rho,S))$
using \ref{eqn:defofR(rho,S)} and Algorithm \ref{alg:basic}.

\item [Step 3.]
Denoting by $n(\rho)=\#(\{i\in [m] \mid \vert \rho(i)\vert =2\} )$, output 
\[\chi(S) =  \sum_{\rho} (-1)^{n(\rho)}\chi(\RR(\rho,S)).
\]
\end{algorithm}

\medskip
\noindent\textsc
{Proof of Correctness:}
It follows from the local conic structure of semi-algebraic sets at 
infinity \cite[Theorem~9.3.6]{BCR} that
replacing $S$ by 
$\RR(S,\R \la \eps \ra)\cap (\RR(Q_0 \le 0) \times \RR(P_0 \le 0))$
does not modify the Euler-Poincar\'e characteristic.
The proof is now based on the following 
lemma.

\begin{lemma}
Let $S$ be a a ${\mathcal Q} \cup {\mathcal P}$-closed and bounded semi-algebraic set.
Denoting by $n(\rho)=\#(\{i\in [m] \mid \vert \rho(i)\vert =2\} )$, for $\rho \in \Sigma_m$,
\[\chi(S) =  \sum_{\rho \in \Sigma_m} (-1)^{n(\rho)}\chi(\RR(\rho,S)).
\]
\end{lemma}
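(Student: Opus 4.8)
The plan is to prove the identity by induction on $j$, peeling off one polynomial $Q_j$ at a time by repeated application of Lemma \ref{lem:generalizedsigns}.

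First I would set up partial realizations. For $0\le j\le m$ and a partial generalized sign condition $\rho'\in\{-2,-1,0,1,2\}^{[j]}$, put
$$
\RR(\rho',S)=\bigl\{(x,y)\in\E(S,\R_j)\mid \bigwedge_{i=1}^{j}\Psi_i^{\rho'(i)}(x,y)\bigr\},
$$
with the convention $\RR(\emptyset,S)=\E(S,\R_0)$ when $j=0$, and $n(\rho')=\#\{\,i\in[j]\mid |\rho'(i)|=2\,\}$. The statement to be proved by induction on $j$ is
$$
\chi(S)=\sum_{\rho'\in\{-2,-1,0,1,2\}^{[j]}}(-1)^{n(\rho')}\,\chi(\RR(\rho',S)),
$$
and the case $j=m$ is exactly the assertion of the lemma. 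The base case $j=0$ is the invariance of the Euler-Poincar\'e characteristic under extension of the real closed field, which follows from the corresponding invariance of the Betti numbers (see \cite{BPRbook2}).

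For the inductive step, assume the identity for $j-1$ and fix $\rho'\in\{-2,-1,0,1,2\}^{[j-1]}$. The set $\RR(\rho',S)$ is closed and bounded, being cut out of the bounded set $\E(S,\R_{j-1})$ by the closed conditions coming from $\Psi_i^{\rho'(i)}$, $i<j$; moreover it is a $\mathcal{Q}^{*}\cup\mathcal{P}$-closed semi-algebraic set, where $\mathcal{Q}^{*}=\mathcal{Q}\cup\{\,Q_i\pm\eps_i\mid i<j\,\}$ consists of polynomials of degree at most $2$ in $Y$ and $Q_j\in\mathcal{Q}^{*}$. Hence the argument of Lemma \ref{lem:generalizedsigns}, applied to the polynomial $Q_j$, the infinitesimal $\eps_j$, and the closed bounded set $\RR(\rho',S)$, gives
$$
\chi(\RR(\rho',S))=\sum_{a\in\{-1,0,1\}}\chi\bigl(\RR(\Psi_j^{a},\RR(\rho',S))\bigr)\;-\;\sum_{a\in\{-2,2\}}\chi\bigl(\RR(\Psi_j^{a},\RR(\rho',S))\bigr).
$$
Since extension of scalars commutes with the realization of a quantifier-free formula, $\RR(\Psi_j^{a},\RR(\rho',S))=\RR(\rho'',S)$, where $\rho''\in\{-2,-1,0,1,2\}^{[j]}$ extends $\rho'$ by $\rho''(j)=a$, and $n(\rho'')=n(\rho')+1$ if $|a|=2$ while $n(\rho'')=n(\rho')$ otherwise. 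Multiplying the displayed identity by $(-1)^{n(\rho')}$ — so that each summand acquires the coefficient $(-1)^{n(\rho'')}$ — summing over all $\rho'\in\{-2,-1,0,1,2\}^{[j-1]}$, and invoking the inductive hypothesis yields the identity for $j$. Taking $j=m$ completes the proof.

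I expect the only delicate point to be the verification that every intermediate set $\RR(\rho',S)$ is again of the shape required by Lemma \ref{lem:generalizedsigns} — closed, bounded, and defined by polynomials quadratic in $Y$ together with polynomials in $X$ alone — which forces one to check that the shifts $Q_i\mapsto Q_i\pm\eps_i$ preserve the degree bound in $Y$, and to keep track of the ordering $0<\eps_m\ll\cdots\ll\eps_1$ of the infinitesimals so that at stage $j$ the field $\R_{j-1}$ over which $\RR(\rho',S)$ lives does not yet contain $\eps_j$ (this is what makes the deformation retract inside the proof of Lemma \ref{lem:generalizedsigns} legitimate). The sign bookkeeping $(-1)^{n(\rho')}(-1)^{[\,|a|=2\,]}=(-1)^{n(\rho'')}$ is then immediate from the form of Lemma \ref{lem:generalizedsigns}.
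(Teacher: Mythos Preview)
Your proof is correct and follows essentially the same approach as the paper's: induction on $j$, with the inductive step obtained by applying Lemma~\ref{lem:generalizedsigns} to each partial realization $\RR(\rho',S)$ and tracking the sign $(-1)^{n(\rho')}$. The only cosmetic difference is that you start the induction at $j=0$ (invoking invariance of $\chi$ under field extension) whereas the paper starts at $j=1$ (taking Lemma~\ref{lem:generalizedsigns} itself as the base case); your additional remarks on why $\RR(\rho',S)$ remains closed, bounded, and of the required shape, and on the role of the ordering of the infinitesimals, make explicit points the paper leaves implicit.
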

\begin{proof}
The proof is by induction on $m$.
The induction hypothesis $H_j$ states that 
denoting by $n(\rho)=\#(\{i\in [j]\mid \vert \rho(i)\vert =2\} )$ for $\rho \in \Sigma_j$,
\[\chi(S) =  \sum_{\rho \in \Sigma_j} (-1)^{n(\rho)}\chi(\RR(\rho,S)).
\]
The base case $H_1$ is exactly Lemma \ref{lem:generalizedsigns} applied to $S$.
Suppose now 
that $H_{j-1}$ holds for 
some 
$1<j\le m$, i.e.
\begin{equation}
\label{eqn:foreuler}
\chi(S) =  \sum_{\rho \in \Sigma_{j-1}} (-1)^{n(\rho)}\chi(\RR(\rho,S)) 
\end{equation}

and let us prove $H_{j}$.
Define ${\mathcal Q}_j={\mathcal Q}\cup \{Q_i\pm\eps_i, i=1,\ldots,j\}$.

For every $\rho \in \Sigma_{j-1}$, $\RR(\rho,S)$ is a ${\mathcal Q}_{j-1} \cup {\mathcal P}$-closed semi-algebraic set.
Denoting by  $\rho_i \in \Sigma_j$, for $\rho \in \Sigma_{j-1}$, $i\in \{-2,-1,0,1,2\}$, the generalized sign condition defined by $\rho_i(u)=\rho(u)$, $u=1,\ldots,j-1$, $\rho_i(j)=i$, notice that
$\RR(\rho_i,S)=\RR(\Psi_j^{i},\RR(\rho,S))$.
Using Lemma \ref{lem:generalizedsigns} applied to  $\RR(\rho,S)$, we obtain
\begin{multline*}
\chi(\RR(\rho,S))=\chi(\RR(\rho_0,S))+\chi(\RR(\rho_1,S))+\chi(\RR(\rho_{-1},S))
\\
-\chi(\RR(\rho_2,S))-\chi(\RR(\rho_{-2},S))).
\end{multline*}

Substituting each $\chi(\RR(\rho,S))$ by its value in (\ref{eqn:foreuler}) one gets $H_j$, since every element of $\Sigma_j$ is of the form $\rho_i$ for some $\rho \in \Sigma_{j-1}$, $i\in \{-2,-1,0,1,2\}$.
\end{proof}
The correctness of Algorithm \ref{alg:general} now follows directly from the
previous lemma.
\qed

\medskip
\noindent\textsc
{Complexity Analysis:}
There are $5^m$ calls to Algorithm  \ref{alg:basic}.
The complexity of the algorithm is clearly $(\ell s m  d)^{O(m+k)}$ arithmetic operations in $\R_m$
from the complexity analysis of Algorithm  \ref{alg:basic}.
Moreover the maximum degree in $\eps_0,\ldots,\eps_m$ is bounded by $(\ell m  d)^{O(m+k)}$.
Finally the
complexity of the algorithm is $(\ell s m  d)^{O(m(m+k))}$ arithmetic operations in $\R$.
\qed

\begin{proof}[Proof of Theorem \ref{the:algo-EP}]
The proof of correctness and the complexity analysis of Algorithm
\ref{alg:general} also proves Theorem \ref{the:algo-EP}.
\end{proof}

\bibliographystyle{amsplain}
\bibliography{master}

\end{document}